\newcommand{\Z}{{\mathbb{Z}}}
\newcommand{\Q}{{\mathbb{Q}}} 
\newcommand{\QQ}{{\overline{\mathbb{Q}}}}
\newcommand{\R}{{\mathbb{R}}}    
\newcommand{\C}{{\mathbb{C}}}
\newcommand{\p}{{\mathfrak{p}}}
\newcommand{\FF}{{\mathcal{F}}}
\newcommand{\OO}{{\mathcal{O}}}
\newcommand{\lra}{\longrightarrow}
\newcommand{\Gal}{\mathrm{Gal}}
\newcommand{\Ker}{\mathrm{Ker}}
\newcommand{\tor}{\mathrm{tor}}
\newcommand{\Hom}{\mathrm{Hom}}
\newcommand{\La}{\Lambda}
\newcommand{\gl}{\mathrm{Gl}}
\newcommand{\Dim}{\mathrm{Dim}}
\theoremstyle{plain}
\newtheorem{theorem}{Theorem}[section]
\newtheorem*{theorem*}{Theorem}
\newtheorem{thm}{Theorem}
\newtheorem{proposition}[theorem]{Proposition}
\newtheorem{rem}[theorem]{Remark}
\newtheorem{lemma}[theorem]{Lemma}
\newtheorem{corollary}[theorem]{Corollary}
\newtheorem{defn}[theorem]{Definition}
\newtheorem{exmp}{Example}
\begin{document}
\title{On characteristic ideal of Selmer group associated to Artin representations} 
\author{Dipramit Majumdar \& Subhasis Panda} 

\begin{abstract} { Selmer group for an Artin representation over totally real fields was studied by Greenberg \cite{gr2, gr} and Vatsal \cite{va}. In this paper we study the Selmer groups for an Artin representation over a totally complex field. We establish an algebraic function of the characteristic ideal of the Selmer group associated to Artin representation over the cyclotomic $\Z_p$- extension of the rational numbers under certain mild hypotheses and construct several examples to illustrate our result. We also prove that in this situation $\mu$-invariant of the dual Selmer group is independent of the choice of the lattice. 
 }\end{abstract}

\maketitle

{\bf Key words:} Selmer Group, Iwasawa theory of Artin representation.
\tableofcontents

\section{Introduction}

 The study of Iwasawa theory of  Artin representation of $\Delta:= \Gal(K/\Q)$, where $K$ is a totally real Galois extension of $\Q$, was pioneered by the work of Greenberg \cite{gr2, gr}. On the other hand, if $K$ is totally complex, the study of Selmer groups associated to Artin representation becomes more subtle. In a recent work, Greenberg-Vatsal \cite{gv}, studied  Iwasawa theory of  Artin representation of $\Gal(K/\Q)$, where $K$ is a totally complex Galois extension of $\Q$, under certain {\bf Hypothesis A}, which we explain as follows.\\
We fix an odd prime $p$. Let $K$ be a finite Galois extension of $\Q$ such that $p \nmid [K:\Q]$. If $\mu_p \not \subset K$, we can take $K'=K(\mu_p)$, then $K'$ is Galois over $\Q$ and $p \nmid [K':\Q]$, here $\mu_p$ denote the set of $p$-th roots of unity. As a consequence, from now on, we assume that $\mu_p \subset K$. We remark that the theory becomes quite complicated if $p \mid [K: \Q]$. The authors hope to study that situation in the future.\\
We fix an embedding $\iota_{\infty}$ of a fixed algebraic closure $\QQ$ of $\Q$ inside $\C$ and for each prime $l$ an embedding $\iota_l$ of $\QQ$ into a fixed algebraic closure $\QQ_l$ of $\Q_l$.\\
By an Artin representation of $\Delta$, we denote a finite dimensional irreducible representation $\rho: \Delta = \Gal(K/\Q) \to \gl_\FF(V)$, here $\FF$ is a finite extension of $\Q_p$ and $V$ is a $\FF$-vector space of dimension  $d(\rho)$. Let $\OO$ denote the ring of integer of $\FF$ and $\pi$ denote an uniformizer of $\OO$.\\
For an Archimedian place $\nu$ of $K$, let $K_\nu$ denote the completion of $K$ at $\nu$, which can be identified with $\R$ or $\C$. By $\Delta_\nu$, we denote the Galois group $\Gal(K_\nu/\R)$, which we identify with a subgroup of $\Delta$. Let $d_\nu^+(\rho)$ denote the multiplicity of the trivial representation of $\Delta_\nu$ in $\rho|_{\Delta_\nu}$. 
Since $K$ is Galois over $\Q$, $K$ is either totally real or totally complex. If $K$ is totally real field, then $d_\nu^+(\rho) = \dim_\FF(\rho)$, as $\Gal(K_\nu/\R) = \{ 1 \}$.
If $K$ is totally complex and $\nu$, $\nu'$ are two different places of $K$, then $\Delta_\nu$ and $\Delta_{\nu'}$ are conjugate in $\Gal(K/\Q)$. Therefore, the multiplicity of the trivial representation  in $\rho|_{\Delta_\nu}$ and $\rho|_{\Delta_{\nu'}}$ is same and $d_\nu^+(\rho)$ is independent of the choice of $\nu$. Hence simply denote $d_\nu^+(\rho)$ by $d^+(\rho)$. 
Further, we define $d^-(\rho):= d(\rho)-d^+(\rho)$.\\
Greenberg and Vatsal \cite{gv}, studied the situation when $d^+(\rho) =1$. Note that $K$ is totally real implies that $d^+(\rho) =d(\rho)$, which was studied in \cite{gr2, gr}. In this paper, we study Selmer group of Artin representations $\rho$, which satisfies $ 1 \le d^+(\rho) \le d(\rho)-1$. Note that this automatically implies that $d(\rho) \ge 2$ and hence $\rho$ is a non-trivial representation.\\
Observe that the embedding $\iota_p$ induces a prime in $K$ above $p$, which we will denote by $\wp$. By $\Delta_{\wp}$, we denote the decomposition group of the prime $\wp$ in $K$ above $p$. Assume that $V$ has a $\Delta_{\wp}$ invariant subspace $V_{\wp}^+$ of dimension $d^+(\rho)$ such that $V_{\wp}^+$ and $V_{\wp}^-:= V/V_{\wp}^+$ have no common irreducible component as a $\Delta_{\wp}$ representation. 
 
 To summarize, we have assumed the following hypothesis on the triple $(\rho,V,V_{\wp}^+):$
\begin{enumerate}
  \item[{\bf HYP 1:}]  $p$ is a fixed odd prime and $\Q(\mu_p) \subset K$ be a finite Galois extension of $\Q$ with $p \nmid |\Delta|=|\Gal(K/\Q)|$.
  \item[{\bf HYP 2:}]  The representation $\rho: \Delta \to \gl_\FF(V)$ is irreducible of $\Dim_\FF(V)= d(\rho)$, where $\FF$ a finite extension of $\Q_p$ and  the triple satisfies the following assumptions:
  \begin{enumerate}
  \item[{\bf HYP 2a:}]For the prime $\wp$ in $K$ above $p$ induced by $\iota_p$, $V$ has a $\Delta_{\wp}$-invariant subspace $V_{\wp}^+$ of dimension $d^+(\rho)$ with $0 < d^+(\rho) < d(\rho) $.
\item [{\bf HYP 2b:}] $V_{\wp}^+$ and $V_{\wp}^-:=V/V_{\wp}^+$ have no common irreducible component as $\Delta_{\wp}$ representations.
  \end{enumerate}
\end{enumerate}

If $p$ is an ordinary prime for a compatible system of $\ell$-adic representation $V=\{ V_\ell \}$ over $\Q$, then $\Dim(V_p^-) = d^-$ (see \cite[Eqn (43)]{gr4}). With this analogy in mind, we choose a subspace $V_\wp^+$ of dimension $d^+$. We remark that, unlike the classical case, in the case of Artin representations, the choice of $d^+$ dimensional subspace is not canonical. Moreover, the Selmer group does depend on the choice of this subspace. The condition that $V_\wp^+$ and $V_\wp^-$ have no common irreducible components is analogues to the classical condition that the representation is $p$-distinguished.\\
Finally, we remark that $d^+(\rho) < d(\rho)$ implies that $K$ is a totally complex, which necessarily implies that $[K:\Q]$ is even.\\

From now on, we assume that our triple $(\rho, V, V_\wp^+)$ satisfy {\bf Hyp 1 and 2}. We choose an $\OO$-lattice $T$ of $V$, which is invariant under the action of $\Delta$. The $\Delta_{\wp}$-filtration on $V$ induces a filtration $0 \subset T_{\wp} \subset T$ of $T$. We denote by $A$ (respectively, $A_{\wp}^+$, respectively, $A_{\wp}^-$) the discrete $\OO$-module $V/T$ (respectively, the image of $V_{\wp}^+$ in $A$, respectively, $A/A_{\wp}^+$) which is isomorphic to $(\FF/\OO)^{d}$ (respectively, $(\FF/\OO)^{d^+}$, respectively, $(\FF/\OO)^{d^-}$) as an $\OO$-module. Following Greenberg-Vatsal \cite{gv}, we define $Sel^{GV}_{\Q}(A,A_\wp^+)$ as
\begin{eqnarray*}
 Sel^{GV}_{\Q}(A,A_\wp^+) & = \Ker\Big(H^1(\Q_\Sigma/\Q, A) \lra \underset { \ell \in \Sigma, \ell \nmid p\infty}{\oplus} \,  H^1(I_\ell , A) \bigoplus \, H^1(I_{p}, A_\wp^-)\Big), 
 \end{eqnarray*}
 here $\Sigma$ denotes a finite set of primes containing $p, \infty$ and the primes which are ramified in $K/\Q$ and for a finite prime $\omega \in \Sigma$, $I_{\omega}$ denotes the inertia group at $\omega$ with respect to a fixed prime $\overline{\omega}$ of $\QQ$ over $\omega$.\\
 
The cyclotomic $\Z_p$-extension of $\Q$ is denoted by $\Q_\infty$ with $\Gamma:=\Gal(\Q_\infty/\Q) \cong \Z_p$. By $\Lambda_\OO$, we denote the Iwasawa algebra $\OO[[ \Gamma ]]$, which is non-canonically isomorphic to the ring of formal power series $\OO[[X]]$. In section 2, we define the Selmer group over the cyclotomic $\Z_p$ extension $\Q_\infty$, which we will denote by $Sel^{GV}_{\Q_\infty}(A,A_\wp^+)$. We will denote the Pontryagin dual of $Sel^{GV}_{\Q_\infty}(A,A_\wp^+)$ by $X^{GV}_{\Q_\infty}(T,T_\wp^+)$. As a $\Lambda_\OO$-module, $X^{GV}_{\Q_\infty}(T,T_\wp^+)$ is finitely generated (See Lemma \ref{fingen}).\\

For a finitely generated torsion $\Lambda_\OO$-module M,  by $C_{\Lambda_\OO}(M)$, we denote the $\Lambda_\OO$ characteristic ideal of $M$. Thus, to talk about the characteristic ideal of  Selmer group associated to an Artin representation, we need the following assumption.\\

{\bf HYP Tor{$_\rho$}:} The Pontryagin dual of the Selmer group $X^{GV}_{\Q_\infty}(T,T_\wp^+)$ associated to $(\rho,V,V_{\wp}^+)$ satisfying {\bf Hyp 1 \& 2} is a finitely generated $\Lambda_\OO$-torsion module.\\

For a finitely generated torsion $\Lambda_\OO$-module $M$, by $\mu(M)$, we denote the $\mu$-invariant of $M$. Recall that $\mu(M)$ is the largest integer $n$ such that $C_{\Lambda_\OO}(M) \subset \pi^n \Lambda_\OO$. In section $5$, we study how $\mu$-invariant of the dual Selmer group  $X^{GV}_{\Q_\infty}(T,T_\wp^+)$ depends on the choice of the lattice $T$. In case of $p$-adic representations, it is known that the $\mu$-invariant does depend on the choice lattice. For example, Mazur \cite[Section 10]{ma}, showed that for the prime $p=5$, the $5$-isogeny class of $X_0(11)$ has three elliptic curves (namely $11.a1, 11.a2$ and $11.a3$ in LMFDB label) and their $\mu$-invariants at $5$ are different. In case of Artin representation over totally real fields, it was shown in ( \cite[Proposition 2.4]{gr}) that the $\mu$-invariant is independent of the choice of lattices. We show the situation is similar in our case. 
 
 \begin{thm}
 Let $(\rho,V,V_\wp^+)$ be a triple which satisfies {\bf HYP 1, 2 \& Tor{$_\rho$}}. Further assume that $H^0(\Delta_\wp,V)=0$. Then the $\mu$-invariant of the dual Selmer group  $X^{GV}_{\Q_\infty}(T,T_\wp^+)$ is independent of the choice of the lattice.
 \end{thm}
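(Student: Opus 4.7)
The plan is to reduce to an elementary lattice change and iterate. Any two $\Delta$-stable $\OO$-lattices $T, T' \subset V$ are commensurable, so after replacing $T$ by $\pi^n T$ for $n \gg 0$---which leaves $A = V/T$ (hence the Selmer group and its $\mu$-invariant) unchanged up to canonical isomorphism---we may assume $T' \subseteq T$ with $T/T'$ a finite $\OO[\Delta]$-module. A Jordan--Hölder filtration of $T/T'$ by $\OO[\Delta]$-submodules then refines the inclusion into a chain of $\Delta$-stable lattices whose successive quotients are simple $k[\Delta]$-modules, where $k := \OO/\pi$. It therefore suffices to treat the elementary case $T' \subseteq T$ with $W := T/T'$ a simple $k[\Delta]$-module.

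For such a pair, intersecting $0 \to T' \to T \to W \to 0$ with the $\Delta_\wp$-filtration $V_\wp^+ \subset V$ yields compatible short exact sequences of discrete modules,
$$0 \to W \to A' \to A \to 0,\quad 0 \to W^+ \to A'^+_\wp \to A^+_\wp \to 0,\quad 0 \to W^- \to A'^-_\wp \to A^-_\wp \to 0,$$
with $W^+ := T_\wp^+/T'^+_\wp$ and $W^- := W/W^+$, both $\Delta_\wp$-modules killed by $\pi$. Taking $H^*(\Q_\Sigma/\Q_\infty, -)$ globally together with $H^*(I_\ell, -)$ at the primes $\ell \in \Sigma$, and assembling the resulting long exact sequences into a commutative diagram in the style of \cite[Prop.\ 2.4]{gr}, I would extract an exact sequence
$$0 \to C \to Sel^{GV}_{\Q_\infty}(A', A'^+_\wp) \to Sel^{GV}_{\Q_\infty}(A, A^+_\wp) \to D \to 0$$
whose flanking terms $C, D$ are subquotients of Galois cohomology of $W$ and $W^-$, both annihilated by $\pi$. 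Pontryagin dualizing and applying multiplicativity of characteristic ideals in short exact sequences of torsion $\Lambda_\OO$-modules gives
$$\mu\bigl(X^{GV}_{\Q_\infty}(T', T'^+_\wp)\bigr) - \mu\bigl(X^{GV}_{\Q_\infty}(T, T^+_\wp)\bigr) = \mu(C^\vee) - \mu(D^\vee),$$
so the theorem reduces to showing $\mu(C^\vee) = \mu(D^\vee)$.

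The role of the hypothesis $H^0(\Delta_\wp, V) = 0$ is to kill local $H^0$-contributions at $\wp$: combined with semisimplicity of $\FF[\Delta_\wp]$ (available because $p \nmid |\Delta|$) and {\bf HYP 2b}, it forces $H^0(\Delta_\wp, V_\wp^{\pm}) = 0$, so only $H^1$-terms of $W^{\pm}$ feed into the local condition at $\wp$. The main obstacle, as I anticipate it, is verifying $\mu(C^\vee) = \mu(D^\vee)$. I expect this to follow from a global Euler characteristic (Poitou--Tate) identity for the finite $k[\Gal(\Q_\Sigma/\Q_\infty)]$-module $W$: the alternating sum of $\OO$-lengths of $H^i(\Q_\Sigma/\Q_\infty, W^{(\pm)})$ exactly balances the local inertia terms at $\ell \in \Sigma$, so that the contributions of $C^\vee$ and $D^\vee$ cancel. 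With this cancellation in hand, induction along the Jordan--Hölder filtration completes the proof.
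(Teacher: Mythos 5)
Your overall strategy is essentially the one the paper follows: reduce to a lattice change by a $\pi$-torsion kernel (the paper reduces to $\Phi$ killed by $\pi$; you refine a bit further to simple $k[\Delta]$-quotients along a Jordan--H\"older filtration) and then show the global and local Euler characteristic contributions cancel, thanks to {\bf HYP 2a}, which forces $\dim_\FF V_\wp^+ = d^+(\rho)$ to coincide with the archimedean multiplicity. The paper organizes this slightly differently: it uses Greenberg--Vatsal's surjectivity result (\cite[Prop.\ 2.1]{gv1}) to write each Selmer group as $0 \to Sel^{GV}_{\Q_\infty}(A,A_\wp^+) \to H^1(\Q_\Sigma/\Q_\infty,A) \to \prod_\ell \mathcal{H}^1_\ell(\Q_\infty,A) \to 0$, and then subtracts the resulting $\mu$-additivity identities for $A$ and $A_1$; whereas you propose a direct kernel/cokernel comparison $0 \to C \to Sel^{GV}_{\Q_\infty}(A',\cdot) \to Sel^{GV}_{\Q_\infty}(A,\cdot) \to D \to 0$. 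These are two bookkeeping schemes for the same computation, and you would need the same GV surjectivity to get the exactness you want on the right.

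One place where your sketch needs tightening: you speak of ``the alternating sum of $\OO$-lengths of $H^i(\Q_\Sigma/\Q_\infty, W^{(\pm)})$,'' but these cohomology groups over $\Q_\infty$ are \emph{not} finite $\OO$-modules; what is needed is their $\widetilde{\Lambda}_\OO = k[[\Gamma]]$-\emph{coranks}, where $k = \OO/\pi$. There is no off-the-shelf Poitou--Tate identity at the $\Lambda$-level here: the paper derives the corank identities by working over each finite layer $\Q_n$, invoking the global and local Euler--Poincar\'e formulas there (yielding $-d^-(\Phi)[\Q_n:\Q]$ globally and $-d(\Phi)[\Q_{n,\p_n}:\Q_p]$ at $p$), controlling the kernels of restriction maps via Hochschild--Serre, and then passing to the limit using the asymptotic $\dim_k(X^{\Gamma_n}) = h p^n + O(1)$. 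It also needs $\mu(H^1_{\mathrm{unr}}(G_{\p_\infty},A_\wp^-))=0$, which it deduces from $H^0(\Delta_\wp,V)=0$; in your framing this is the vanishing of the ``unramified'' contribution to $D$. So the plan is sound and in the same spirit as the paper's, but the ``Poitou--Tate identity'' step is really a multi-step corank calculation rather than a single citation, and the cancellation of $C$ against $D$ hinges precisely on the definitional match $\dim V_\wp^+ = d^+(\rho)$ built into {\bf HYP 2a}.
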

 
Theorem A easily follows from Theorem \ref{muconstant}. It'll be interesting to know whether $\mu$-invariant is $0$ in the above situation.\\
 
Remaining of the paper is devoted to the study of `algebraic functional equation' of the dual Selmer group  $X^{GV}_{\Q_\infty}(T,T_\wp^+)$. To motivate this, let $V_f = \{V_{f,\ell} \}$ be a compatible system of $\ell$-adic representation of $G_\Q:=\Gal(\QQ/\Q)$ associated to a $p$-ordinary newform of weight $2$ with rational Fourier coefficient. Let $V_f^\ast =  \{ V_{f, \ell}^\ast \} = \{ Hom_{\Q_\ell}(V_{f, \ell} , \Q_\ell(1)) \}$. To $V_f$ and $V_f^\ast$, we can associate complex $L$-functions $L_{V_f}( s)$ and $L_{V_f^\ast}(s)$, which can be completed and extended to the whole complex plane, and they are related by an `analytic functional equation' $\Lambda_{V_f}( s) = \pm \Lambda_{V_f^\ast}(2- s)$. The twisted $L$-function of $V_f$ similarly satisfy a functional equation  relating $L_{V_f}(s, \chi)$ and $L_{V_f^\ast}( 2-s, \chi^{-1})$ for any character $\chi$ of $\Gamma = \Gal(\QQ/\Q)$. By work of Mazur-Swinnerton-Dyer, Manin, Visik, Amice-Velu \cite[Theorem V.7.9]{be2}, under a certain hypothesis, we can associate an `analytic' $p$-adic $L$-function $L^p_{V_f}$ to $V_f$ and moreover it satisfies interpolation property $L^p_{V_f}(\chi) = c_\chi L_{V_f}(1, \chi)$ for some constant $c_\chi \in \C $. As a consequence, we see that, under certain hypothesis,  `analytic' $p$-adic $L$-function of $V_f$ satisfies a functional equation 
$$L^p_{V_f}(\chi) = u_\chi L^p_{V_f^\ast}(\chi^{-1}),$$
here $u_\chi$ is an element of $\C_p^\times$ of absolute value $1$. By the cyclotomic Iwasawa main conjecture for $f$, we have $C_{\Z_p[[\Gamma]]}(X(T_f/\Q_\infty)) = (L^p_{V_f}(X))$ as ideals in the Iwasawa algebra $\Z_p[[X]]$, here $L^p_{V_f}(X)$ denotes the power series corresponding to $L^p_{V_f}$. As a consequence of the `analytic' functional equation of the $p$-adic $L$-function of $f$, we obtain the following equality of ideals in the Iwasawa algebra $\Z_p[[X]]$,
 \begin{equation}\label{algfndefn}
 C_{\Z_p[[\Gamma]]}(X(T_f/\Q_\infty)) = C_{\Z_p[[\Gamma]]}(X(T_f^\ast/\Q_\infty)^\iota),
 \end{equation}
 here for a $\Z_p[[\Gamma]]$-module $M$, by $M^\iota$ we understand the module whose underlying abelian group is $M$ but the action of $\gamma \in \Gamma$ on $m \in M^\iota$ is by $\gamma^{-1}$. The above equation \eqref{algfndefn} is the `algebraic functional equation'. One expects `algebraic functional equation' to be true purely from an algebraic perspective without the knowledge of `analytic' $p$-adic $L$-function or Iwasawa main conjecture. For $p$-ordinary $\ell$-adic representation $V$ of dimension $d$, Greenberg \cite{gr4} and Perrin-Riou \cite{pr}, have proved `algebraic functional equation' under certain hypothesis. In particular, `algebraic functional equation' (over cyclotomic extension) of modular forms $f$ of weight at least $2$ is known under certain hypothesis. This result has been extended in two different ways. This result has been extended in the case of big  $\Lambda$-adic representation associated to Hida's $p$-ordinary (resp. nearly $p$-ordinary) family of cuspform \cite{jp} (resp. Hilbert cusp form \cite{jm}). On the other hand, this result has also been extended to the Selmer group associated to $2$-dimensional motives satisfying certain hypothesis over arbitrary $p$-adic Lie extensions \cite{jo} instead of cyclotomic extension.\\

In this paper, we also study the `algebraic functional equation' for Artin representations. Note that the Galois representation associated to a weight $1$ cusp form is an odd irreducible Artin representation of dimension $2$, which in our notation corresponds to $d=2$ and $d^+ = 1$. Hence our theory includes weight $1$ cusp forms and their symmetric powers. Unlike the case of $p$-adic representation, in the case of Artin representation, $V^\ast$ is not an Artin representation. Let  $(\sigma, U)$ be the $\Delta$-representation $\Hom_\FF(V, \FF(\tau))$, here $\tau$ is the Teichm\"{u}ller character. We identify $V^\ast \cong U(\kappa)$, here $\kappa: \Gamma \to 1+ p\Z_p$ is an isomorphism coming from the action of  $\Gal(K_{\infty}/K)$ on $\mu_{p^{\infty}}$, where $\mu_{p^{\infty}}$ is the group of $p$-power roots of unity. Following Rubin, for a character $\varphi: \Gamma \to \OO^*$, we define an $\OO$-linear automorphism $Tw_{\varphi} : \Lambda_\OO \to \Lambda_\OO$ induced by the map $Tw_{\varphi}(\gamma)= \varphi(\gamma)\gamma$. For a finitely generated torsion $\Lambda_\OO$-module $M$, by $Tw_{\varphi}(M)$, we understand the module whose underlying group is $M$, but the action of $\gamma \in \Gamma$ is by $Tw_{\varphi}(\gamma)$. With this notation, we prove the following `algebraic functional equation'.

\begin{thm}
Let $(\rho,V,V_\wp^+)$ be a triple which satisfies {\bf HYP 1, 2 \& Tor{$_\rho$}} and $T$ be an $\OO$-lattice of $V$ invariant under action of $\Delta$. Let $(\sigma,U)$ denote the Artin representation $U:= \Hom_{\FF}(V, \FF(\tau))$, where $\tau$ denotes the Teichmuller character and $T^\prime$ the corresponding lattice in $U$. Assume that the triple $(\sigma,U,U_\wp^+)$ satisfies {\bf HYP Tor{$_\sigma$}}. Further assume that the $\Delta_\wp$-representations $V$ and $U$ does not contain trivial sub-representation. Then as ideals of ${\Lambda_\OO}$
\begin{equation*}
  C_{\Lambda_\OO}(X^{GV}_{\Q_\infty}(T,T{_\wp^+})^\iota)=  Tw_{\kappa} (C_{\Lambda_\OO} (X^{GV}_{\Q_\infty}(T',(T'){_\wp^+})).
\end{equation*}

\end{thm}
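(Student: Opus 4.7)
The plan is to decompose the theorem into two pieces. First, identify the Kummer dual $T^* := \Hom_\OO(T, \OO(1))$ with $T'(\kappa)$; this produces the twist $Tw_\kappa$ on the right-hand side. Second, apply Poitou-Tate global duality to prove an algebraic functional equation relating the dual Selmer groups of $T$ and $T^*$; this produces the $\iota$-twist on the left.

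For the first step, factor the $p$-adic cyclotomic character as $\chi_{\text{cyc}} = \tau \cdot \kappa$, where $\tau$ is trivial on $\Gamma$ and $\kappa$ is trivial on $\Delta$. This gives an $\OO[G_\Q]$-isomorphism $T^* \cong T'(\kappa)$ that sends the orthogonal-complement filtration $(T^*)_\wp^+ := \Hom_\OO(T_\wp^-, \OO(1))$ to $(T')_\wp^+(\kappa)$ with $(T')_\wp^+ := \Hom_\OO(T_\wp^-, \OO(\tau))$; a dimension count using $\tau(c) = -1$ confirms that this filtration is the correct $(d^+(\sigma))$-dimensional subspace of $U$ in the Greenberg-Vatsal sense. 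Because $\kappa$ restricts trivially to $G_{\Q_\infty}$, the cohomology groups $H^\bullet(\Q_\Sigma/\Q_\infty, A^*)$ and $H^\bullet(\Q_\Sigma/\Q_\infty, A')$ agree as abelian groups, while the inherited $\Gamma$-action on the former is twisted by $\kappa$. Hence
\[
Sel^{GV}_{\Q_\infty}(A^*, (A^*)_\wp^+) \cong Tw_\kappa\bigl(Sel^{GV}_{\Q_\infty}(A', (A')_\wp^+)\bigr)
\]
as $\Lambda_\OO$-modules. Using $(Tw_\varphi(M))^\vee = Tw_{\varphi^{-1}}(M^\vee)$ and the identity $Tw_\varphi(C_{\Lambda_\OO}(M)) = C_{\Lambda_\OO}(Tw_{\varphi^{-1}}(M))$, Pontryagin-dualizing and passing to characteristic ideals yields
\[
C_{\Lambda_\OO}\bigl(X^{GV}_{\Q_\infty}(T^*, (T^*)_\wp^+)\bigr) = Tw_\kappa\bigl(C_{\Lambda_\OO}(X^{GV}_{\Q_\infty}(T', (T')_\wp^+))\bigr).
\]

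The core of the proof is then the Kummer-dual algebraic functional equation
\[
C_{\Lambda_\OO}\bigl(X^{GV}_{\Q_\infty}(T, T_\wp^+)^\iota\bigr) = C_{\Lambda_\OO}\bigl(X^{GV}_{\Q_\infty}(T^*, (T^*)_\wp^+)\bigr),
\]
which I would prove along the lines of Greenberg \cite{gr4} and Perrin-Riou \cite{pr}. Apply the Poitou-Tate nine-term exact sequence on $\Q_\infty$ to the coefficients $A$ and $T^*$ together with local Tate duality at each prime $\omega \in \Sigma$. The essential local point at $\omega = \wp$ is that the Greenberg condition $\Ker(H^1(\Q_{\infty,\wp}, A) \to H^1(I_\wp, A_\wp^-))$ is, up to finite error, the exact annihilator under local Tate duality of the corresponding condition for $T^*$ defined by $(T^*)_\wp^+$. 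The $\iota$-twist appears because Pontryagin duality inverts the $\Gamma$-action; thus duality realizes $X^{GV}_{\Q_\infty}(T^*, (T^*)_\wp^+)$ as pseudo-isomorphic to $X^{GV}_{\Q_\infty}(T, T_\wp^+)^\iota$, and pseudo-isomorphic finitely generated torsion $\Lambda_\OO$-modules share the same characteristic ideal.

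The principal obstacle is the bookkeeping of local terms in the second step. One must show that the $+/-$ filtrations on $V$ and $V^*$ are exactly orthogonal under the local Tate pairing at $\wp$, for which Hyp 2b ensures no obstruction arises from shared irreducible components of $V_\wp^+$ and $V_\wp^-$. The hypothesis that $V$ and $U$ contain no trivial $\Delta_\wp$-subrepresentation guarantees $H^0(\Delta_\wp, V) = H^0(\Delta_\wp, V^*) = 0$, which is needed to kill the $H^0$-terms in the Poitou-Tate sequence. At ramified primes $\ell \neq p$ in $\Sigma$, one must check that the inertia-invariant local cohomology groups appearing as error terms are finite, so that they do not alter the characteristic ideal. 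Finally, Hyp Tor$_\sigma$ guarantees that $X^{GV}_{\Q_\infty}(T^*, (T^*)_\wp^+)$ is itself $\Lambda_\OO$-torsion, so both sides of the identity are well-defined. Combining the two steps yields Theorem B.
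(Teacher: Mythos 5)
Your overall decomposition is correct and matches the paper's architecture: step one reduces to the Kummer dual $T^* \cong T'(\kappa)$ via the factorization $\chi_{\cyc} = \tau\kappa$ (this is exactly what Lemma \ref{twistofideal} plus the identification $V^* \cong U(\kappa)$ accomplishes), and step two is a global-duality functional equation $C_{\Lambda_\OO}(X^{GV}_{\Q_\infty}(T,T_\wp^+)^\iota) = C_{\Lambda_\OO}(X^{GV}_{\Q_\infty}(T^*,(T^*)_\wp^+))$. The hypotheses you invoke for killing $H^0$-terms are the right ones and are exactly how the paper uses them.

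However, your second step, which you call the ``core,'' is precisely where the paper's real work occurs and where your account has a genuine gap. You propose to apply the Poitou-Tate nine-term sequence directly over $\Q_\infty$ and then observe that the Greenberg local condition at $\wp$ is, ``up to finite error,'' the exact annihilator of the dual condition, concluding that $X^{GV}_{\Q_\infty}(T^*,(T^*)_\wp^+)$ is pseudo-isomorphic to $X^{GV}_{\Q_\infty}(T,T_\wp^+)^\iota$. But the phrase ``up to finite error'' is exactly the obstacle: Flach's generalised Cassels--Tate pairing (the form of global duality actually available here) requires the local conditions to be \emph{exactly} self-orthogonal, not approximately so, and the paper explicitly records that the $GV$-condition at $p$ fails this. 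The paper's resolution is to introduce the auxiliary Selmer group $Sel^g$ whose local conditions (images of $H^1_g(G_\omega,V)$) do satisfy exact orthogonality (Lemma \ref{3.3}), to prove the comparison $Sel^g \hookrightarrow Sel^{GV}$ with uniformly bounded finite cokernel (Theorem \ref{2.1}, Corollary \ref{2.3}), and to prove control theorems (Theorem \ref{3.1}, Corollary \ref{3.2}, Theorem \ref{3.4}). Your proposal omits $Sel^g$ entirely, which is the key missing idea.

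A second, related gap: the statement that duality ``realizes $X^{GV}_{\Q_\infty}(T^*,(T^*)_\wp^+)$ as pseudo-isomorphic to $X^{GV}_{\Q_\infty}(T,T_\wp^+)^\iota$'' is not a direct output of Poitou--Tate on $\Q_\infty$. The paper instead works level-by-level: it first twists by a generic character $\varphi$ (Lemma \ref{4.4}) so that both Selmer groups over $\Q_n$ become finite, applies Flach's pairing to identify $Sel^g_{\Q_n}(A^*,\varphi)^\vee \cong Sel^g_{\Q_n}(A,\varphi^{-1})$, chains this with the comparison and control maps to produce pseudo-isomorphisms $\eta_n$, takes $\varprojlim$, identifies $\varprojlim_n Sel^{GV}_{\Q_\infty}(A,\varphi^{-1})^{\Gamma_n}$ with the adjoint module $a^1_{\Lambda_\OO}(X^{GV}_{\Q_\infty}(T,T_\wp^+,\varphi^{-1})^\iota)$ via Perrin-Riou's Lemma \ref{4.2}, and finishes with Lemma \ref{4.3}. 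The $\iota$ and the equality of characteristic ideals come through the Ext-module $a^1_{\Lambda_\OO}(-)$, not from a bare pseudo-isomorphism. Your instinct about the source of the $\iota$-twist is right, but the mechanism is the adjoint/Ext formalism together with the auxiliary twist $\varphi$ and the $Sel^g$ intermediary, none of which appear in your write-up. Without these, the ``finite error'' cannot be controlled and the limit argument cannot be carried out.
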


In a recent work, J. Lee \cite[Theorem D]{le} proved a stronger version of the `algebraic' functional equation for Abelian varieties with good reduction at $p$. He showed that, in that situation, such equality holds not only for the characteristic ideal but also for the corresponding elementary modules. As a consequence, he does not require torsion assumption. The second name author hopes to study a similar question in the case of Artin representations.\\

The structure of the article is as follows. In section 2, we define various Selmer groups involved. A crucial component of our proof of functional equation is the generalized Casseles-Tate pairing of Flach. Unfortunately, the local condition above $p$ of the Selmer group $Sel^{GV}_{\Q_n}(A,A_\wp^+)$ does not satisfy the necessary conditions required for the generalized Casseles-Tate pairing of Flach, and thus following Rubin, we introduced an auxiliary Selmer group $Sel^{g}_{\Q_n}(A,A_\wp^+)$. In section 3, we compare the Selmer groups $Sel^{GV}_{\Q_n}(A,A_\wp^+)$ and $Sel^{g}_{\Q_n}(A,A_\wp^+)$ (see Theorem \ref{2.1}). In section 4, we prove control theorems for Artin representations (see Theorem \ref{3.1}, Theorem \ref{3.4}). In section 5, we study the dependence of $\mu$-invariant under a choice of lattice and prove Theorem A. In section 6, we prove algebraic functional equation under certain mild hypothesis. In section 7, we provide some examples of Artin representations that satisfy all the hypothesis of Theorem B.\\

{\bf Acknowledgment: } It is a pleasure to acknowledge several e-mail communication, advice, and re-mark of Professor Somnath Jha. The first author was partially supported by IIT Madras ERP project RF/2021/0658/MA/RFER/008794 and MHRD SPARC grant number 445. The second author was partially supported by IIT Madras ERP project RF/2021/0658/MA/RFER/008794.

\section{Definition of Selmer Group}\label{section1}

Let $(\rho, V, V_\wp^+)$ be a triple satisfying {\bf Hyp 1 \& 2}. Let us choose an $\OO$-lattice $T$ of $V$, which is invariant under the action of $\Delta$. The $\Delta_{\wp}$-filtration on $V$ induces a filtration $0 \subset T_{\wp} \subset T$ of $T$. We denote by $A$ (respectively, $A_{\wp}^+$, respectively, $A_{\wp}^-$) the discrete $\OO$-module $V/T$ (respectively, the image of $V_{\wp}^+$ in $A$, respectively, $A/A_{\wp}^+$) which is isomorphic to $(\FF/\OO)^{d}$ (respectively, $(\FF/\OO)^{d^+}$, respectively, $(\FF/\OO)^{d^-}$) as an $\OO$-module.

Let $\Sigma$ be a finite set of primes containing $p$, $\infty$ and the primes which are ramified in $K/\Q$ and let $\Q_{\Sigma}$ be the maximal unramified extension of $\Q$ outside of $\Sigma$. Since all the primes ramified in $K/\Q$ are in $\Sigma$, we have $K \subset \Q_{\Sigma}$. By $\Q_{\infty}$, we denote the cyclotomic $\Z_p$ extension of $\Q$ and by $\Gamma$, the Galois group of $\Q_{\infty}$ over $\Q$ that is, $\Gamma=\Gal(\Q_\infty/\Q)$, which is canonically isomorphic to $\Z_p$. 
Let $\Gamma_n=\Gamma^{p^n}$ and $\Q_n = \Q_\infty^{\Gamma_n}$. Hence  $\Gal(\Q_\infty/\Q_n) \cong \Gamma_n$ and $\Gal(\Q_n/\Q) \cong \Gamma/\Gamma_n \cong \Z/p^n\Z$. \\
The notation $\omega \mid \Sigma$ means a prime $\omega$ in $\Q_n$ lying above some prime in $\Sigma$. For $\omega$, a finite prime in $\Q_n$, let $G_{\omega}$ denotes the decomposition group at $\omega$ and $I_{\omega}$ denotes the inertia group at $\omega$ with respect to a fixed prime $\overline{\omega}$ of $\QQ$ over $\omega$. Similarly, $v_n$ (respectively, $v_\infty$) denote a prime in $\Q_n$ (respectively, $\Q_\infty$) such that $v_\infty \mid v_n \mid \Sigma$ for all $n \ge 0$. By $G_{v_n}$ (respectively, $G_{v_\infty}$), we denote the decomposition subgroup of $\Gal(\QQ/\Q_n)$ for $\overline{v}/v_n$ (respectively, $\Gal(\QQ/\Q_\infty)$ for $\overline{v}/v_\infty$), and $I_{v_n}$ (respectively, $I_{v_\infty}$) denote the inertia subgroup of $G_{v_n}$ (respectively, $G_{v_\infty}$). Note that as $p$ is totally ramified in $\Q_n$ (resp. $\Q_\infty$), the unique prime above $p$ in $\Q_n$ (resp. in $\Q_\infty$) will be denoted by $\p_n$ (resp. by $\p_\infty$). 

The Selmer group associated to the representation $(\rho,V,V_{\wp}^+)$ over $\Q_n$ is a subgroup of $H^1(\Q_\Sigma/\Q_n, V)$  defined by the Selmer structure ( see \cite[Definition 2.2]{be}) $\mathcal{L} = (\mathcal{L}_\omega)$  given by:
\begin{equation*}
\mathcal{L}_\omega:= H^1_g(G_\omega,V) = \begin{cases}
                                   \Ker(H^1(G_\omega,V) \to H^1(I_\omega,V) ) \text{ if } \omega  \nmid p, \\
                                   \Ker(H^1(G_{\p_n}, V) \to H^1(I_{\p_n}, V_\wp^-))  \text{  if } \omega= \p_n.
                                   \end{cases} 
\end{equation*}
We remark that we denote the local condition by $H^1_g$ as for Selmer group of $p$-adic representations the Block-Kato \cite[Equation (3.7.1), (3.7.2)]{bk} local conditions $H^1_g$ matches with the conditions as mentioned above \cite[Lemma 2]{fl}. The projection map $pr: V \to A$ induces a map in cohomology $pr^*: H^1(G_\omega,V) \to H^1(G_\omega,A)$. Let $H^1_g(G_\omega,A)$ be the image of $H^1_g(G_\omega,V)$ under the map $pr^*$, where $\omega$ is a prime in $\Q_n$ lying above some prime in $\Sigma$. 
Following ( see \cite[Definition 5.1]{ru}), we define a Selmer group of $A$ over $\Q_n$ as follows:
\begin{defn}\label{selg}
\begin{equation*}
Sel^g_{\Q_n}(A,A_\wp^+) := \mathrm{ker}\Big(H^1(\Q_\Sigma/\Q_n, A) \lra \underset { \omega \mid \Sigma, \, \omega \nmid p\infty}{\oplus} \, \frac{ H^1(G_\omega, A)}{H^1_g(G_\omega,A)} \underset { \omega = \p_n}{\oplus} \, \frac{ H^1(G_{\p_n}, A)}{H^1_g(G_{\p_n},A)} \Big).
\end{equation*}
\end{defn}

On the other hand following Greenberg-Vatsal ( See \cite[Introduction, page 3]{gv}), we define

\begin{defn}\label{selgv}
\begin{equation*}
Sel^{GV}_{\Q_n}(A,A_\wp^+) := \mathrm{ker}\Big(H^1(\Q_\Sigma/\Q_n, A) \lra \underset { \omega \mid \Sigma, \omega \nmid p\infty}{\oplus} \, \frac{ H^1(G_\omega, A)}{H^1_{GV}(G_\omega,A)} \underset { \omega= \p_n}{\oplus} \, \frac{ H^1(G_{\p_n}, A)}{H^1_{GV}(G_{\p_n},A)} \Big),
\end{equation*}
\end{defn}
where
\begin{equation*}
H^1_{GV}(G_\omega,A) = \begin{cases}
                                   \Ker(H^1(G_\omega,A) \to H^1(I_\omega,A) ) \text{ if } \omega \nmid p, \\ 
                                   \Ker(H^1(G_{\p_n}, A) \to H^1(I_{\p_n}, A_\wp^-))  \text{  if } \omega = \p_n. 
                                   \end{cases} 
\end{equation*}
Equivalently, $Sel^{GV}_{\Q_n}(A,A_\wp^+)$ can be defined as
\begin{eqnarray*}
 Sel^{GV}_{\Q_n}(A,A_\wp^+) & = \Ker\Big(H^1(\Q_\Sigma/\Q_n, A) \lra \underset { \omega \mid \Sigma, \omega \nmid p\infty}{\oplus} \,  H^1(I_\omega , A) \underset { \omega = \p_n}{\oplus} \, H^1(I_{\p_n}, A_\wp^-)\Big).
 \end{eqnarray*}

Since our prime p is odd, the 1-cocycle classes are trivial at the Archimedian primes of $\Q_n$. Therefore, the direct sum in definitions \ref{selg} and \ref{selgv} are just over the finite primes in $\Q_n$. 
We remark that when $d^+(\rho)=1$, the definition \ref{selgv} matches with the definition of Selmer group as in ( See \cite[Introduction, page 3]{gv}).\\

For $Sel^\dagger_{\Q_n}(A,A_\wp^+)$ with $\dagger \in \{ g, GV \}$, the Pontryagin dual is defined as
\begin{equation*}
X^\dagger_{\Q_n}(T,T_\wp^+) = (Sel^\dagger_{\Q_n}(A,A_\wp^+))^\vee := \Hom_{\mathrm{cont}}(Sel^\dagger_{\Q_n}(A,A_\wp^+), \Q_p/\Z_p ).
\end{equation*}
For the infinite extension $\Q_{\infty}$ of $\Q$, the Selmer group $Sel^\dagger_{\Q_\infty}(A,A_\wp^+)$ is defined to be the inductive limit of $Sel^\dagger_{\Q_n}(A,A_\wp^+)$ with respect to the natural restriction maps and the Pontryagin dual $X^\dagger_{\Q_\infty}(T,T_\wp^+)$ is defined to be the projective limit of $X^\dagger_{\Q_n}(T,T_\wp^+)$ with respect to the natural corestriction maps. The group $\Gamma = \Gal(\Q_\infty/\Q)$ has a natural $\OO$-linear action on $Sel^\dagger_{\Q_\infty}(A,A_\wp^+)$. 
Therefore, we consider $Sel^\dagger_{\Q_\infty}(A,A_\wp^+)$ as a discrete $\Lambda_\OO$-module and $X^\dagger_{\Q_\infty}(T,T_\wp^+)$ as a compact $\Lambda_\OO$ module, where $\Lambda_\OO$ denotes the Iwasawa algebra $\OO[[ \Gamma ]]$.

\begin{lemma}\label{fingen}
$X^\dagger_{\Q_\infty}(T,T_\wp^+)$ is a finitely generated $\La_\OO$-module.
\end{lemma}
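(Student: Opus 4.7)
The plan is to reduce the claim to the analogous finiteness for Galois cohomology and then apply topological Nakayama. Since $Sel^{\dagger}_{\Q_\infty}(A,A_\wp^+)$ is by definition a $\Lambda_\OO$-submodule of $H^1(\Q_\Sigma/\Q_\infty,A)$, its Pontryagin dual $X^{\dagger}_{\Q_\infty}(T,T_\wp^+)$ is a quotient of $H^1(\Q_\Sigma/\Q_\infty,A)^{\vee}$. Because $\Lambda_\OO$ is Noetherian, it therefore suffices to prove that $H^1(\Q_\Sigma/\Q_\infty,A)^{\vee}$ is finitely generated over $\Lambda_\OO$. I would do this for both choices $\dagger\in\{g,GV\}$ simultaneously, since neither local condition enters the argument.

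The key tool is topological Nakayama for the compact local ring $\Lambda_\OO$, with maximal ideal $\m=(\pi,\gamma-1)$: a compact $\Lambda_\OO$-module $X$ is finitely generated if and only if $X/\m X$ is finite. Dually, this amounts to showing that the discrete module $H^1(\Q_\Sigma/\Q_\infty,A)[\m]$ is finite. Since $H^1(\Q_\Sigma/\Q_\infty,A)[\m]\subseteq H^1(\Q_\Sigma/\Q_\infty,A[\pi])^{\Gamma}$ (via the long exact sequence attached to $0\to A[\pi]\to A\xrightarrow{\pi} A\to 0$ together with taking $\Gamma$-invariants), it is in fact enough to show that $H^1(\Q_\Sigma/\Q_\infty,A[\pi])^{\Gamma}$ is finite.

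For this I would apply the inflation–restriction exact sequence associated with $\Gal(\Q_\infty/\Q)=\Gamma$:
\begin{equation*}
0\lra H^1(\Gamma,A[\pi]^{G_{\Q_\infty}})\lra H^1(\Q_\Sigma/\Q,A[\pi])\lra H^1(\Q_\Sigma/\Q_\infty,A[\pi])^{\Gamma}\lra H^2(\Gamma,A[\pi]^{G_{\Q_\infty}}).
\end{equation*}
The module $A[\pi]$ is finite, hence so is $A[\pi]^{G_{\Q_\infty}}$, which makes the outer $\Gamma$-cohomology groups finite. The middle term $H^1(\Q_\Sigma/\Q,A[\pi])$ is finite by the standard finiteness of Galois cohomology with finite coefficients over a global field restricted ramification (i.e.\ $\Q_\Sigma/\Q$ with $\Sigma$ finite), so the right-most term is finite, as required.

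The only subtle point is the Nakayama-type passage between $H^1(\Q_\Sigma/\Q_\infty,A)[\m]$ and $H^1(\Q_\Sigma/\Q_\infty,A[\pi])^{\Gamma}$, which I expect to be the main obstacle; one must check that the map $H^1(\Q_\Sigma/\Q_\infty,A[\pi])\to H^1(\Q_\Sigma/\Q_\infty,A)[\pi]$ from the $\pi$-torsion Kummer sequence is surjective (it is, once $A$ is $\pi$-divisible, which it is as an $\OO$-cofree discrete module), and that taking $\Gamma$-invariants preserves finiteness (which it does because taking invariants of a finite module yields a finite module). Once these compatibilities are in place, the chain of inclusions and quotients yields that $H^1(\Q_\Sigma/\Q_\infty,A)^{\vee}/\m H^1(\Q_\Sigma/\Q_\infty,A)^{\vee}$ is finite, hence $H^1(\Q_\Sigma/\Q_\infty,A)^{\vee}$ is a finitely generated $\Lambda_\OO$-module, and the same therefore holds for its quotient $X^{\dagger}_{\Q_\infty}(T,T_\wp^+)$.
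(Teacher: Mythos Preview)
Your approach is correct and amounts to a detailed proof of the one-line fact the paper invokes, namely that $H^1(\Q_\Sigma/\Q_\infty,A)$ is cofinitely generated over $\Lambda_\OO$; the reduction via Pontryagin duality and topological Nakayama is the standard route.

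One small slip worth correcting: the asserted containment $H^1(\Q_\Sigma/\Q_\infty,A)[\m]\subseteq H^1(\Q_\Sigma/\Q_\infty,A[\pi])^{\Gamma}$ is not literally an inclusion. The Kummer sequence gives a \emph{surjection} $H^1(\Q_\Sigma/\Q_\infty,A[\pi])\twoheadrightarrow H^1(\Q_\Sigma/\Q_\infty,A)[\pi]$ (as you correctly note in your final paragraph), and taking $\Gamma$-invariants of a surjection need not stay surjective. What you need is that the kernel of this surjection, namely $H^0(\Q_\Sigma/\Q_\infty,A)/\pi H^0(\Q_\Sigma/\Q_\infty,A)$, is finite (it is a $\pi$-quotient of the $\OO$-cofinitely generated module $A^{G_{\Q_\infty}}$), so that $H^1(\Gamma,\text{kernel})$ is finite and the long exact sequence in $\Gamma$-cohomology then bounds $(H^1(\Q_\Sigma/\Q_\infty,A)[\pi])^{\Gamma}$. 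Alternatively, you can bypass this by applying inflation--restriction directly with coefficients in $A$: since $\Gamma\cong\Z_p$ has cohomological dimension one, the restriction $H^1(\Q_\Sigma/\Q,A)\to H^1(\Q_\Sigma/\Q_\infty,A)^{\Gamma}$ is already surjective, and $H^1(\Q_\Sigma/\Q,A)$ is cofinitely generated over $\OO$, which suffices for Nakayama without passing through $A[\pi]$.
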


\begin{proof}
This follows from the fact that $H^1(\Q_\Sigma/\Q_\infty,A)$ is co-finitely generated $\Lambda_\OO$-module. 
\end{proof}

Conjecturally one expects that $X^\dagger_{\Q_\infty}(T,T_\wp^+)$ is a torsion $\La_\OO$-module provided {\bf HYP 1 \& 2} are satisfied. For example, $X^\dagger_{\Q_\infty}(T,T_\wp^+)$  is torsion $\La_\OO$-module if $d^+(\rho)=1$ and $\rho$ satisfies {\bf HYP 1 \& 2} ( See \cite[Proposition 4.2]{gv}).\\

Let $K_{\infty} = K \Q_{\infty}$ be the cyclotomic $\Z_p$ extension of $K$. Since $(p,[K:\Q])=1$; we have $K \cap \Q_\infty=\Q$. Therefore, we can identify $\Gal(K_{\infty}/K)$ with $\Gal(\Q_{\infty}/\Q)$ by the usual restriction map. Since $\mu_p \subset K$, we have $K_\infty=K(\mu_{p^{\infty}})$, where $\mu_{p^{\infty}}$ denotes the group of $p$-power roots of unity. Using the action of  $\Gal(K_{\infty}/K)$ on $\mu_{p^{\infty}}$, we get an isomorphism $\kappa: \Gamma \to 1+ p\Z_p$. By $\tau$, we denote the Teichm\"{u}ller character. The action of $G_\Q:= \Gal(\QQ/\Q)$ on $\mu_{p^{\infty}}$ is given by $\tau\kappa$.

Following Greenberg-Vatsal ( See \cite[Proposition 4.7]{gv}), we extend the definition of Selmer groups of Artin representations to include the Tate twists of Artin representations. 
Define $T^*= \Hom (A, \mu_{p^{\infty}})$, $V^*= T^*\otimes_{\OO} \FF$ and $A^*= V^*/T^*$. 
The representation $V^*$ of $G_{\Q}$ is not an Artin representation. Observe that
\begin{equation*}
V^* = \Hom (A, \mu_{p^{\infty}})\otimes_{\OO} \FF \cong \Hom_{\FF}(V, \FF(\tau \kappa)) \cong \Hom_\FF(V, \FF(\tau)) \otimes_\FF \FF(\kappa), 
\end{equation*}
as a $G_{\Q}$ representation, where $\FF(\tau \kappa)$ and $\FF(\kappa)$ represent two one dimensional vector spaces over $\FF$ on which $G_{\Q}$ acts through $\tau \kappa$ and $\kappa$, respectively. Let $U$ denote the $G_\Q$ representation $U:= \Hom_\FF(V, \FF(\tau))$, then $V^* \cong U \otimes_{\FF} \FF(\kappa)$. Note that $U$ is an Artin representation of $\Delta$, which we'll denote by $\sigma$. Let $T'$ denote the $\OO$-lattice of $U$ induced from $T$ and define $A':=U/T'$.
Let $U_\wp^+$ denote the orthogonal complement of $V_\wp^+$ under the pairing between $V$ and $U$. 
This gives us a $\Delta_\wp$ filtration $0 \subset U_\wp^+ \subset U $. 
Let $d^+(\sigma)$ denotes the dimension of $U_\wp^+$. By using the orthogonality relation between $V_\wp^+$ and $U_\wp^+$ we have $\dim U_\wp^+ + \dim V_\wp^+ = \dim V$. 
Therefore, $\dim U_\wp^+ = \dim V - \dim V_\wp^+ = d(\rho) - d^+(\rho)= d^-(\rho)=d^+(\sigma)$. 
Since $V^* \cong U \otimes \FF(\kappa)$, we get a $\Delta_\wp$ filtration $0 \subset U_\wp^+ \otimes_{\FF} \FF(\kappa) := (V^*)_\wp^+ \subset V^*$ from the filtration of $U$. This gives us a filtration for $A^*$. 
As in the case of Artin representation $\rho$, using the lattice and filtration of $A^*$, we can define Selmer groups and its Pontryagin duals for $A^*$ over $\Q_n$ and $\Q_\infty$.\\

Note that as $U_\wp^+$ is the orthogonal complement of $V_\wp^+$ under the pairing between $V$ and $U$, and since $V_\wp^+$ and $V_\wp^-$ have no common irreducible components ({\bf HYP 2b}), $\Hom(V_\wp^-,\FF(\tau)) \subset U$ is the orthogonal complement of $V_\wp^+$. Therefore, $U_\wp^+$ and $\Hom(V_\wp^-,\FF(\tau))$ are isomorphic as $G_{\Q}$ representations. As a consequence we see that $(V^*)_\wp^+ := U_\wp^+ \otimes \FF(\kappa)$ is isomorphic as $G_{\Q}$ representation to $\Hom(V_\wp^-,\FF(\tau)) \otimes \FF(\kappa) := (V_\wp^-)^*$. Similarly we obtain $(V_\wp^+)^{*} \cong (V^*)_\wp^-$ as $G_{\Q}$ representations.\\

The character $\kappa$ factors through $\Gal(\Q_{\infty}/\Q)$, therefore $V^* \cong U$ as a $G_{\Q_{\infty}} : = \Gal(\QQ/\Q_\infty)$ representation. It is easy to show that for $\dagger \in \{ g, GV \}$
\begin{equation*}
Sel^{\dagger}_{\Q_\infty}(A^*,(A^*){_\wp^+}) \cong Sel^\dagger_{\Q_\infty}(A',(A'){_\wp^+}) \otimes_\OO \OO(\kappa) \text{  and } X^\dagger_{\Q_\infty}(T^*,(T^*){_\wp^+}) \cong X^\dagger_{\Q_\infty}(T',(T'){_\wp^+}) \otimes_\OO \OO(\kappa^{-1}).
\end{equation*}

We further extend the definition of Selmer group to include a twist of Artin representations by characters of $\Gamma$ ( See \cite[\S 1.2, page 408]{jm}). 
For a continuous group homomorphism $\varphi: \Gamma \to \OO^\times$, let $\FF(\varphi)$ (respectively, $\OO(\varphi)$) represents a  Galois module whose underlying group is $\FF$ (respectively, $\OO$) on which $G_\Q$ acts through $\varphi$.  
For an Artin representation $ ( \rho,V,V_\wp^+ )$, let $\rho(\varphi)$ denotes the $\FF$-representation space $V \otimes_{\FF} \FF (\varphi)$ with the diagonal action of $G_\Q$.
We define $T(\varphi):= T \otimes_\OO \OO(\varphi)$ and $A(\varphi):= A \otimes_\OO \OO(\varphi)$ with the diagonal $G_\Q$ action. Then we have a $\Delta_\wp$ filtration $0 \subset A_\wp^+(\varphi) \subset A(\varphi)$ of $A(\varphi)$ by applying $- \otimes \OO(\varphi)$ to the filtration of $A$ . Similarly, we can define $V^*(\varphi), T^*(\varphi), A^*(\varphi)$ and a filtration $(A^*)_\wp^+(\varphi)$. Using these, we can define the Selmer group $ Sel^{\dagger}_{L}(B,B_\wp^+,\varphi)$  and its Pontryagin dual $X^\dagger_{L}(C,C{_\wp^+},\varphi^{-1})$, where $\dagger \in \{ g, GV \}$, $L \in \{ \Q_n , \Q_\infty \}$, $B \in \{ A , A^* \}$ and $C \in \{ T , T^* \}$. Since $\varphi$ factors through $\Gamma$, as before we have
\begin{equation*}
Sel^{\dagger}_{\Q_\infty}(B,B_\wp^+,\varphi) \cong Sel^{\dagger}_{\Q_\infty}(B,B_\wp^+) \otimes_\OO \OO(\varphi), \text{  and }  X^\dagger_{\Q_\infty}(C,C{_\wp^+},\varphi) \cong X^\dagger_{\Q_\infty}(C,C{_\wp^+}) \otimes_\OO \OO(\varphi^{-1}).
\end{equation*}

\section{Comparison of  Selmer Groups} 

In this section, we study how the Selmer groups $Sel^{GV}_{\Q_n}(A,A_\wp^+, \varphi)$ and $Sel^{g}_{\Q_n}(A,A_\wp^+, \varphi)$ are related for any continuous group homomorphism $\varphi: \Gamma \to \OO^\times$. We show that under mild hypothesis,  $Sel^{g}_{\Q_n}(A,A_\wp^+, \varphi)$ is a subgroup of  $Sel^{GV}_{\Q_n}(A,A_\wp^+, \varphi)$ and moreover the quotient is finite whose cardinality uniformly bounded independent of $n$. A similar result holds for $A^*$. In the case of the Selmer group associated to $p$-adic representations, results comparing the Bloch-Kato Selmer group and Greenberg Selmer group over $\Q_{\infty}$ can be found in \cite{fl, oc}.



Before proving our comparison theorems, we write down a few simple observations which will be useful throughout the paper. \\

Let $W$ be a $\Delta:= \Gal(K/\Q)$ representation, $T_W$ be a lattice in $W$, and $A_W =W/T_W$. The action of $G_{\p_\infty}$ on $W$ is via the natural map $\pi: G_{\p_\infty} \to \Delta$. It is easy to see that the image of the map $\pi$ is $\Delta_\wp$, the decomposition group of $\wp/p$. In this manner, we  view $W$ as an $G_{\p_\infty}$ representation.

\begin{lemma}\label{lemma2.1}

Let $W$ be a representation of $\Delta$. For any continuous character $\varphi: \Gamma:= \Gal(\Q_\infty/\Q) \to \OO^\times$, let $W(\varphi):= W \otimes \FF(\varphi)$ be the representation with diagonal action of $G_{\p_\infty}$. We have

\begin{enumerate}
\item $H^0(\Delta_\wp,W)=0$ iff $H^0(G_{\p_\infty}, W(\varphi))=0$. 
\item $H^0(\Delta_\wp, V)=0$ implies $H^0(G_{\p_\infty}, V_\wp^-)=0$ ($(\rho,V,V_{\wp}^+)$ satisfies {\bf HYP 1 \& 2}).
\end{enumerate}
In particular, $H^0(\Delta_\wp, U ) = 0$ implies $H^0(G_{\p_\infty}, U(\kappa)) = H^0(G_{\p_\infty}, V^* )=0$ and  $H^0(G_{\p_\infty}, U_\wp^-(\kappa)) = H^0(G_{\p_\infty}, (V^*)_\wp^- )=0$.
\end{lemma}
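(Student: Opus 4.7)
The plan is to reduce everything to two elementary observations: first, that the $G_{\p_\infty}$-action on a $\Delta$-representation $W$ factors through $\Delta_\wp$; and second, that $\varphi$ acts trivially on $G_{\p_\infty}$, so the twist does not actually affect the $H^0$.

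For part (1), I would begin by noting that the image of the composition $G_{\p_\infty} \hookrightarrow \Gal(\QQ/\Q_\infty) \hookrightarrow G_\Q \to \Delta$ is exactly the decomposition group $\Delta_\wp$. Hence as $G_{\p_\infty}$-modules, $W^{G_{\p_\infty}} = W^{\Delta_\wp}$. Next, since $\varphi : \Gamma = \Gal(\Q_\infty/\Q) \to \OO^\times$ is a character of the Galois group of $\Q_\infty$ over $\Q$, the pullback of $\varphi$ to $G_\Q$ is trivial on the normal subgroup $\Gal(\QQ/\Q_\infty)$, and in particular on $G_{\p_\infty}$. Consequently, the diagonal $G_{\p_\infty}$-action on $W(\varphi) = W \otimes \FF(\varphi)$ is just the action on the $W$-factor, giving $H^0(G_{\p_\infty}, W(\varphi)) \cong W^{\Delta_\wp} = H^0(\Delta_\wp, W)$. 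This proves (1).

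For part (2), the key input is Maschke's theorem: since $p \nmid |\Delta|$, also $p \nmid |\Delta_\wp|$, so $|\Delta_\wp|$ is invertible in $\FF$ and $\FF[\Delta_\wp]$ is semisimple. The short exact sequence
\begin{equation*}
0 \to V_\wp^+ \to V \to V_\wp^- \to 0
\end{equation*}
of $\Delta_\wp$-modules therefore splits, so $V \cong V_\wp^+ \oplus V_\wp^-$ as $\Delta_\wp$-representations, and taking invariants gives $V^{\Delta_\wp} \cong (V_\wp^+)^{\Delta_\wp} \oplus (V_\wp^-)^{\Delta_\wp}$. Thus $H^0(\Delta_\wp, V) = 0$ forces $(V_\wp^-)^{\Delta_\wp} = 0$, and applying part (1) to $W = V_\wp^-$ (with $\varphi$ trivial) yields $H^0(G_{\p_\infty}, V_\wp^-) = 0$.

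The ``in particular'' statement then follows by specialization. Applying (1) with $W = U$ and $\varphi = \kappa$ gives the first equivalence $H^0(\Delta_\wp, U) = 0 \Leftrightarrow H^0(G_{\p_\infty}, U(\kappa)) = 0$; and the identification $V^* \cong U(\kappa)$ established in Section \ref{section1} identifies this with $H^0(G_{\p_\infty}, V^*) = 0$. For the second conclusion, one first applies (2) to the triple $(\sigma, U, U_\wp^+)$ (which also satisfies {\bf HYP 1 \& 2}) to get $H^0(G_{\p_\infty}, U_\wp^-) = 0$, and then applies (1) to $W = U_\wp^-$ with $\varphi = \kappa$; the identification $(V^*)_\wp^- \cong U_\wp^-(\kappa)$ noted earlier finishes it. The mildly subtle point—really the only one—is the bookkeeping in part (1) that $\varphi$ is trivial on $G_{\p_\infty}$; everything else is formal.
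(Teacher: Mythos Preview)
Your proof is correct and follows essentially the same approach as the paper. The only cosmetic difference is in part~(2): where the paper uses the long exact sequence in cohomology together with the vanishing $H^1(\Delta_\wp, V_\wp^+)=0$ (from \cite[Proposition~1.6.2]{nsw}), you instead invoke Maschke's theorem to split the short exact sequence directly --- but both arguments rest on the same underlying fact that $p \nmid |\Delta_\wp|$.
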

\begin{proof}
Note that as $G_{\p_\infty}$ acts on $W$ via $\Delta_\wp$, it follows that $H^0(\Delta_\wp, W)=0$ if and only if $H^0(G_{\p_\infty},W)=0$.
As $G_{\p_\infty}$ acts trivially on $\FF(\varphi)$, we have
 \begin{equation*}
 H^0(G_{\p_\infty}, W(\varphi)) = (W \otimes \FF(\varphi))^{G_{\p_\infty}} = W^{G_{\p_\infty}} \otimes \FF(\varphi).
 \end{equation*}
Hence $H^0(G_{\p_\infty},W)=0$ iff $H^0(G_{\p_\infty}, W(\varphi))=0$.\\
Next, we shall prove $(2)$. From the short exact sequence $0 \to V_\wp^+ \to V \to  V_\wp^- \to 0$, we get the exact sequence  $ H^0(\Delta_\wp, V) \to H^0(\Delta_\wp,  V_\wp^-) \to  H^1(\Delta_\wp, V_\wp^+)$. By  \cite[Proposition 1.6.2]{nsw},  $H^1(\Delta_\wp, V_\wp^+)=0$, hence $H^0(\Delta_\wp, V)=0$ implies $H^0(\Delta_\wp,  V_\wp^-)=0$. A similar argument shows that $H^0(\Delta_\wp, U)=0$ implies $H^0(G_{\p_\infty}, U_\wp^-)=0$
\end{proof}

\begin{rem}
We conclude that $H^0(\Delta_\wp, W)=0$ implies that $A_W(\varphi)^{G_{\p_\infty}}$ is finite for any $\varphi$. Now, let $G_n= G_{\p_n}$, note that $H^0(G_{\p_\infty},W(\varphi))=0$ implies $H^0(G_n, W(\varphi))=0$. As a consequence, we conclude that $H^0(\Delta_\wp,W)=0$ implies that $(A_W(\varphi))^{G_n}$ is finite for any $n$ and $\varphi$.
\end{rem}

The following theorem relates  the Selmer groups $Sel^{g}_{\Q_n}(A,A_\wp^+,\varphi)$ and $Sel^{GV}_{\Q_n}(A,A_\wp^+,\varphi)$.

\begin{theorem}\label{2.1}
Let $(\rho,V,V_{\wp}^+)$ be a triple which satisfies {\bf HYP 1 \& 2}. Assume that  $ H^0(\Delta_{\wp}, V)=0$ and $H^0(\Delta_{\wp}, U ) = 0$. Then for any continuous group homomorphism $\varphi: \Gamma \to \OO^\times$, the map $Sel^{g}_{\Q_n}(A,A_\wp^+,\varphi) \to Sel^{GV}_{\Q_n}(A,A_\wp^+,\varphi)$ is injective, and the cokernel of the map is finite for all $n$ with their cardinality uniformly bounded independent of $n$. In particular,  $Sel^{g}_{\Q_\infty}(A,A_\wp^+,\varphi)$ is a finite index subgroup of $Sel^{GV}_{\Q_\infty}(A,A_\wp^+,\varphi)$.
\end{theorem}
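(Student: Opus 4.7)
The plan is to compare the two Selmer groups place by place, since they differ only through the local conditions. The inclusion $H^1_g(G_\omega, A, \varphi) \subseteq H^1_{GV}(G_\omega, A, \varphi)$ at every place $\omega$ of $\Q_n$ follows from the commutativity of the natural square whose horizontal maps are $\alpha_V, \alpha_A$ (the local conditions defining $Sel^g$ and $Sel^{GV}$) and whose vertical maps are induced by the projection $pr: V \surj A$ (and the corresponding projection at the $V_\wp^-$ level). This gives injectivity $Sel^g_{\Q_n} \inj Sel^{GV}_{\Q_n}$ and presents the cokernel as a subgroup of $\bigoplus_\omega H^1_{GV}/H^1_g$.

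Next, I would show that $H^1_g = H^1_{GV}$ at every $\omega \nmid p$. Since $\rho$ factors through $\Delta$ with $p \nmid |\Delta|$ and $\varphi$ is unramified at $\omega$ (as $\Q_\infty/\Q$ is unramified outside $p$), the inertia group $I_\omega$ acts on $V(\varphi)$ and $A(\varphi)$ through a finite quotient of prime-to-$p$ order. The invariants functor is therefore exact, yielding $V(\varphi)^{I_\omega} \surj A(\varphi)^{I_\omega}$; passing to Frobenius coinvariants gives $H^1_{un}(G_\omega, V(\varphi)) \surj H^1_{un}(G_\omega, A(\varphi))$, and combined with the local inclusion this forces equality at $\omega \nmid p$. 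Consequently the cokernel of $Sel^g_{\Q_n} \to Sel^{GV}_{\Q_n}$ injects into the single local term $H^1_{GV}(G_{\p_n}, A(\varphi))/H^1_g(G_{\p_n}, A(\varphi))$ at the unique prime $\p_n$ above $p$.

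The heart of the proof is bounding this local quotient uniformly in $n$. Applying the snake lemma to the two short exact sequences $0 \to H^1_g(V) \to H^1(G_{\p_n}, V(\varphi)) \to \mathrm{Im}(\alpha_V) \to 0$ and $0 \to H^1_{GV}(A) \to H^1(G_{\p_n}, A(\varphi)) \to \mathrm{Im}(\alpha_A) \to 0$ (connected by $pr^*$) identifies $H^1_{GV}/H^1_g$ with $\Coker(H^1_g(V) \to H^1_{GV}(A))$, fitting into an exact sequence whose two outer terms inject into $\Coker(pr^*: H^1(G_{\p_n}, V) \to H^1(G_{\p_n}, A))$ and $\Ker(\pi_I) \cap \mathrm{Im}(\alpha_V)$, where $\pi_I: H^1(I_{\p_n}, V_\wp^-(\varphi)) \to H^1(I_{\p_n}, A_\wp^-(\varphi))$ is the map induced by the quotient $V_\wp^- \twoheadrightarrow A_\wp^-$. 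By the long exact sequence of $0 \to T \to V \to A \to 0$, the first outer term embeds into $H^2(G_{\p_n}, T(\varphi))$, and local Tate duality identifies this with the Pontryagin dual of $H^0(G_{\p_n}, A^*(\varphi^{-1}))$. Under $A^*(\varphi^{-1}) \cong A'(\kappa\varphi^{-1})$, the hypothesis $H^0(\Delta_\wp, U) = 0$ and Lemma \ref{lemma2.1} (applied to $W = U$ and character $\kappa\varphi^{-1}$) give that $A'(\kappa\varphi^{-1})^{G_{\p_\infty}}$ is finite; since $G_{\p_\infty} \subset G_{\p_n}$, the subgroup $A'(\kappa\varphi^{-1})^{G_{\p_n}} \subseteq A'(\kappa\varphi^{-1})^{G_{\p_\infty}}$ is bounded uniformly in $n$. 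The second outer term is handled by a parallel analysis: using the long exact sequence of $0 \to T_\wp^- \to V_\wp^- \to A_\wp^- \to 0$, the hypothesis $H^0(\Delta_\wp, V) = 0$, and Lemma \ref{lemma2.1} (which forces $H^0(G_{\p_\infty}, V_\wp^-(\varphi)) = 0$), one obtains a uniform bound. Taking the direct limit over $n$ yields the ``in particular'' clause that $Sel^g_{\Q_\infty}(A, A_\wp^+, \varphi)$ is a finite-index subgroup of $Sel^{GV}_{\Q_\infty}(A, A_\wp^+, \varphi)$.

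The main obstacle is the inertia side: $H^1(I_{\p_n}, T_\wp^-(\varphi))$ is not a priori finitely generated over $\OO$, so the uniform bound on $\Ker(\pi_I) \cap \mathrm{Im}(\alpha_V)$ must be extracted by carefully combining the integrality constraint from $\Ker(\pi_I)$ with the global constraint from $\mathrm{Im}(\alpha_V)$, converting the $H^0$-vanishing hypothesis into a uniform finiteness bound via local duality.
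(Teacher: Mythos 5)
Your treatment of the primes $v_n \nmid p$ is correct, and is in fact slightly sharper than the paper's: since $I_{v_n}$ acts on $T(\varphi)$ through a quotient of order prime to $p$, inflation-restriction identifies $H^1(I_{v_n}, T(\varphi))$ with a subgroup of $\Hom_{\mathrm{cont}}(I'', T(\varphi))^{I'}$ (with $I''$ acting trivially), which is torsion-free; hence the connecting map $A(\varphi)^{I_{v_n}} \to H^1(I_{v_n},T(\varphi))$ vanishes, $A(\varphi)^{I_{v_n}}$ is divisible, and $H^1_g = H^1_{GV}$ exactly at these places. (The paper only bounds the cotorsion by $|A'(\varphi)_{\mathrm{fin}}|$; your argument shows it is zero.) One small caution: ``the invariants functor is exact'' is not literally true for $I_\omega$ (only for its prime-to-$p$ quotient), so this step should be phrased via the vanishing of $H^1(I_\omega,T(\varphi))_{\tor}$ as above.

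At the prime $\p_n$ above $p$, however, your argument has a genuine gap, and you have in fact flagged it yourself. In your snake-lemma setup, the term that must be controlled is $\Ker_3 = \mathrm{Im}(\alpha_V) \cap \Ker(\pi_I)$, where $\Ker(\pi_I)$ is the image of $H^1(I_{\p_n},T_\wp^-(\varphi))$ in $H^1(I_{\p_n},V_\wp^-(\varphi))$. This is a torsion-free $\OO$-submodule of a finite-dimensional $\FF$-vector space, and there is no a priori reason it should be finite --- it is essentially an integral lattice sitting inside the $\FF$-space $\mathrm{Im}(\alpha_V)$, so it may well have positive $\OO$-rank. In that case the connecting map $\Ker_3 \to \Coker_1$ from a torsion-free module into a torsion module can have arbitrarily large image, and the uniform bound on $\Coker_1 = H^1_{GV}/H^1_g$ is not obtained. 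Your closing sentence asserts the bound ``must be extracted by carefully combining the integrality constraint from $\Ker(\pi_I)$ with the global constraint from $\mathrm{Im}(\alpha_V)$,'' but this combination is precisely the nontrivial content and is not supplied. The paper circumvents this by a different diagram: it replaces the target $H^1(I_{\p_n}, -)$ by the $G_{\p_n}/I_{\p_n}$-invariants and works at the level of $T$ modulo torsion, so the offending term becomes $\Coker\big(\alpha_n: H^1(G_{\p_n},T(\varphi))/\tor \to (H^1(I_{\p_n},T_\wp^-(\varphi))/\tor)^{G_{\p_n}/I_{\p_n}}\big)$, a map between \emph{finitely generated} $\OO$-modules. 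Even so, bounding $\Coker(\alpha_n)$ requires two further diagram chases that your sketch does not have analogues of: the factorization through $\psi_n$ and the surjectivity of $\psi''_n$ (cohomological dimension one of $G_{\p_n}/I_{\p_n}$), and the $1-g_{\p_n}$ snake-lemma argument bounding $H^1(G_{\p_n}/I_{\p_n}, H^1(I_{\p_n},T_\wp^-(\varphi))_{\tor})$ by $H^0(G_{\p_n}, A_\wp^-(\varphi))$, both of which are where the hypothesis $H^0(\Delta_\wp,V)=0$ is actually used. You will need to rework the local computation at $p$ along these lines before the proof is complete.
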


\begin{proof}
We want to show that the cardinality of $H^1_{GV}(G_{v_n}, A(\varphi))/H^1_{g}(G_{v_n}, A(\varphi))$ is finite and independent of $n$. 
First, we will consider the case where $ v_n \nmid p$ for all $n \geq 0$. In this situation, we follow  the proof as in \cite[Lemma 2.8]{oc}. Consider the following commutative diagram: 
\xymatrix{ 
& & H^1_g(G_{v_n}, V(\varphi)) \ar[r] \ar[d] & H^1_{GV}(G_{v_n}, A(\varphi))_{\mathrm{div}} \ar[d] \\
 0 \ar[r] & \frac{H^1(G_{v_n}, T(\varphi))}{H^1(G_{v_n}, T(\varphi))_{\mathrm{tor}}} \ar[r] \ar[d] & H^1(G_{v_n}, V(\varphi))  \ar[r] \ar[d] & H^1(G_{v_n}, A(\varphi))_{\mathrm{div}} \ar[r] \ar[d] & 0 \\ 
                   0 \ar[r] & \frac{H^1(I_{v_n}, T(\varphi))}{H^1(I_{v_n}, T(\varphi))_{\mathrm{tor}}} \ar[r] &  H^1(I_{v_n}, V(\varphi)) \ar [r] &  H^1(I_{v_n}, A(\varphi))_{\mathrm{div}} \ar[r] & 0. } 

Since $v_n \nmid p$, the group $H^1(I_{v_n}, T(\varphi))/H^1(I_{v_n}, T(\varphi))_{\tor}$ is a finitely generated $\Z_p$-module. 
Therefore, the image of $H^1_g(G_{v_n}, V(\varphi))$ coincides with the maximal divisible part $H^1_{GV}(G_{v_n}, A(\varphi))_{\mathrm{div}}$ of $H^1_{GV}(G_{v_n}, A(\varphi))$. 
To prove our result, it suffices to show that the cotorsion part of $H^1_{GV}(G_{v_n}, A(\varphi))= H^1(G_{v_n}/I_{v_n}, A(\varphi)^{I_{v_n}})$ is finite and independent on $n$.
Now consider the $I_{v_n}$-invariant subgroup $(A(\varphi))^{I_{v_n}}$ of $A(\varphi)$. Observe that $(A(\varphi))^{I_{v_n}}$ is independent of the choice of $n$ because $I_{v_n}=I_{v_\infty}$ for all $n \geq 0$; we denote $(A(\varphi))^{I_{v_n}}$ by $A^{'}(\varphi)$. 
Now consider the exact sequence $0 \to A^{'}(\varphi)_{\mathrm{div}} \to A^{'}(\varphi) \to A^{'}(\varphi)_{\mathrm{fin}} \to 0$, where $A^{'}(\varphi)_{\mathrm{fin}}=A^{'}(\varphi)/A^{'}(\varphi)_{\mathrm{div}}$ denote the finite part of $A^{'}(\varphi)$. 
This induces a map $H^1(G_{v_n}/I_{v_n}, A^{'}(\varphi)_{\mathrm{div}}) \to H^1(G_{v_n}/I_{v_n}, A^{'}(\varphi)) \to H^1(G_{v_n}/I_{v_n}, A^{'}(\varphi)_{\mathrm{fin}})$. 
The image of $H^1(G_{v_n}/I_{v_n}, A^{'}(\varphi)_{\mathrm{div}})$ under the above map is a co-free $\Z_p$-module. 
Hence the cotorsion part of $H^1_{GV}(G_{v_n}, A(\varphi))= H^1(G_{v_n}/I_{v_n}, A(\varphi)^{I_{v_n}})$ is bounded by $H^1(G_{v_n}/I_{v_n}, A^{'}(\varphi)_{\mathrm{fin}})$. Since  $G_{v_n}/I_{v_n}$ is a procyclic group, the order of $H^1(G_{v_n}/I_{v_n}, A^{'}(\varphi)_{\mathrm{fin}})$ is bounded by the order $A^{'}(\varphi)_{\mathrm{fin}}$. Therefore, the order of the cotorsion part of $H^1_{GV}(G_{v_n}, A(\varphi))$ and hence $H^1_{GV}(G_{v_n}, A(\varphi))/H^1_{g}(G_{v_n}, A(\varphi))$ is bounded by the order $A^{'}(\varphi)_{\mathrm{fin}}$. This proves the case where $v_n \nmid p$.
 
Next, consider the case where  $ v_n \mid p$, that is, $v_n = \p_n$. Consider the following commutative diagram as in \cite[Proposition 4.2, Page 87]{oc}
\begin{center}
\begin{equation}\label{3.a}
\begin{tikzpicture}[baseline= (a).base]
\node[scale=.9] (a) at (0,0){
\begin{tikzcd}[ arrows={-stealth}]
&  0 \dar & 0 \dar \\
& H^1_g(G_{\p_n}, V(\varphi)) \rar["{p_n}"] \dar & H^1_{GV}(G_{\p_n}, A(\varphi)) \dar \\
\frac{H^1(G_{\p_n}, T(\varphi))}{H^1(G_{\p_n}, T(\varphi))_{\mathrm{tor}}} \arrow[hookrightarrow]{r} \dar["{\alpha_n}"] &  H^1(G_{\p_n}, V(\varphi)) \rar \dar["{\beta_n}"] &  H^1(G_{\p_n}, A(\varphi)) \rar \dar["{\gamma_n}"]  & H^2(G_{\p_n}, T(\varphi))_{\mathrm{tor}} \\
{\Big( \frac{H^1(I_{\p_n}, T_\wp^-(\varphi))}{H^1(I_{\p_n}, T_\wp^-(\varphi))_{\mathrm{tor}}} \Big)}^{G_{\p_n}/I_{\p_n}} \arrow[hookrightarrow]{r} & H^1(I_{\p_n}, V_\wp^-(\varphi))^{G_{\p_n}/I_{\p_n}} \rar &  H^1(I_{\p_n},A_\wp^-(\varphi))^{G_{\p_n}/I_{\p_n}}.  & 
\end{tikzcd} };
\end{tikzpicture}
\end{equation}
 \end{center} 

The map $\beta_n$ is decomposed as $H^1(G_{\p_n}, V(\varphi)) \xrightarrow {\beta^{'}_{n}} H^1(G_{\p_n}, V_\wp^-(\varphi)) \xrightarrow {\beta^{''}_{n}} H^1(I_{\p_n}, V_\wp^-(\varphi))^{G_{\p_n}/I_{\p_n}}$.

The cokernel of $\beta^{'}_{n}$ is lies inside $H^2(G_{\p_n}, V_\wp^+(\varphi))$. By  Tate's duality, dim $H^2(G_{\p_n}, V_\wp^+(\varphi))=$ dim $H^0(G_{\p_n}, (V_\wp^+(\varphi))^*)=$ dim $H^0(G_{\p_n}, (V^*)_\wp^-(\varphi^{-1})).$ By our assumption $H^0(\Delta_\wp, U)=0$ and Lemma \ref{lemma2.1}, we have dim $H^0(G_{\p_n}, (V^*)_\wp^-(\varphi^{-1}))=0$. As a consequence the cokernel of $\beta^{'}_{n}$ is zero. Since the group $G_{\p_n}/I_{\p_n}$ has cohomological dimension one, the map $\beta^{''}_{n}$ is also surjective. Therefore, $\beta_n$ is surjective. 

Applying snake lemma to the commutative diagram in equation \eqref{3.a}, we get 
\begin{equation*}
 0 \to \mathrm{coker}(\alpha_{n}) \to \mathrm{coker}(p_n) \to H^2(G_{\p_n}, T(\varphi))_{\mathrm{tor}} .
\end{equation*}
By our assumption, $H^0(\Delta_\wp, U)=0$ and Lemma \ref{lemma2.1}, we obtain $H^0(G_{\p_\infty}, V^*(\varphi))=0$. Therefore, $H^0(G_{\p_n}, A^*(\varphi))$ is finite and independent on $n$, and hence by local Tate duality, $H^2(G_{\p_n}, T(\varphi))_{\mathrm{tor}}$ is also finite and independent on $n$. Thus to bound the size of the group $H^1_{GV}(G_{\p_n}, A(\varphi))/H^1_{g}(G_{\p_n}, A(\varphi)$, we  only need to bound the size of $\mathrm{coker}(\alpha_{n})$. Now consider the following commutative diagram:
\begin{center}
$
\xymatrix{ 
H^1(G_{\p_n}, T(\varphi))_{\mathrm{tor}} \ar@{^{(}->}[r] \ar[d] & H^1(G_{\p_n}, T(\varphi))  \ar[r] \ar[d]_{\psi_n} & \frac{H^1(G_{\p_n}, T(\varphi))}{H^1(G_{\p_n}, T(\varphi))_{\mathrm{tor}}} \ar[r] \ar[d]_{\alpha_n} & 0 \\ 
H^1(I_{\p_n}, T_\wp^-(\varphi))_{\mathrm{tor}}^{G_{\p_n}/I_{\p_n}} \ar@{^{(}->}[r] &  H^1(I_{\p_n}, T_\wp^-(\varphi))^{G_{\p_n}/I_{\p_n}} \ar [r]^{e_{n}} &  {\Big( \frac{H^1(I_{\p_n}, T_\wp^-(\varphi))}{H^1(I_{\p_n}, T_\wp^-(\varphi))_{\mathrm{tor}}} \Big)}^{G_{\p_n}/I_{\p_n}}. }$
\end{center}
The coker$(e_n)$ is a submodule of $H^1(G_{\p_n}/I_{\p_n}, H^1(I_{\p_n}, T_\wp^-(\varphi))_{\mathrm{tor}})$. Therefore by applying snake lemma we obtain $\mathrm{coker}(\psi_{n}) \to \mathrm{coker}(\alpha_n) \to H^1(G_{\p_n}/I_{\p_n}, H^1(I_{\p_n}, T_\wp^-(\varphi))_{\mathrm{tor}})$.

We now consider the following commutative diagram:
\begin{center}
\xymatrix{ 
0 \ar[r] & H^0(I_{\p_n}, A_\wp^-(\varphi))_{\mathrm{div}} \ar[r] \ar[d]_{1-g_{\p_n}} & H^0(I_{\p_n}, A_\wp^-(\varphi))  \ar[r] \ar[d]_{1-g_{\p_n}} & H^1(I_{\p_n}, T_\wp^-(\varphi))_{\mathrm{tor}} \ar[r] \ar[d]_{1-g_{\p_n}} & 0 \\ 
0 \ar[r] & H^0(I_{\p_n}, A_\wp^-(\varphi))_{\mathrm{div}} \ar[r] &  H^0(I_{\p_n}, A_\wp^-(\varphi)) \ar[r] &  H^1(I_{\p_n}, T_\wp^-(\varphi))_{\mathrm{tor}} \ar[r] & 0, } 
\end{center}
where $g_{\p_n}$ is a topological generator of the pro-cyclic group $G_{\p_n}/I_{\p_n}$. The cokernel of the right vertical map is  $H^1(G_{\p_n}/I_{\p_n}, H^1(I_{\p_n}, T_\wp^-(\varphi))_{\mathrm{tor}})$. The kernel of the middle vertical map is $H^0(G_{\p_n}, A_\wp^-(\varphi))$. The group $H^0(G_{\p_n}, A_\wp^-(\varphi))$  is finite and independent on $n$ by our assumption $ H^0(\Delta_\wp, V)=0$ and Lemma \ref{lemma2.1}.  Hence cokernel of the middle vertical map is also is finite and independent on $n$. From snake lemma, we obtain that $H^1(G_{\p_n}/I_{\p_n}, H^1(I_{\p_n}, T_\wp^-(\varphi))_{\mathrm{tor}})$ is finite and independent of $n$. Thus to show that coker$(\alpha_{n})$ is finite and independent on $n$, it is enough to show it for coker$(\psi_{n})$. The map $\psi_{n}$ is decomposed as $H^1(G_{\p_n}, T(\varphi)) \xrightarrow {\psi^{'}_{n}} H^1(G_{\p_n}, T_\wp^-(\varphi)) \xrightarrow {\psi^{''}_{n}} H^1(I_{\p_n}, T_\wp^-(\varphi))^{G_{\p_n}/I_{\p_n}}$.
The map ${\psi^{''}_{n}}$ is surjective as $G_{\p_n}/I_{\p_n}$ has cohomological dimension $1$. 
To prove our claim, we just need to show that  coker$(\psi^{'}_{n})$ is bounded and independent on $n$. The cokernel of $\psi^{'}_{n}$ is a submodule of $H^2(G_{\p_n}, T_\wp^+(\varphi))$. The group $H^2(G_{\p_n}, T_\wp^+(\varphi))$ is pontryagin dual of $H^0(G_{\p_n}, A_{\wp}^+(\varphi)^{*})$, which is finite and independent on $n$ by our assumption $ H^0(\Delta_\wp, V)=0$ and Lemma \ref{lemma2.1}. 

\end{proof}

Comparison between the Selmer groups $Sel^{g}_{\Q_n}(A^*,{(A^*)}_\wp^+,\varphi)$ and $Sel^{GV}_{\Q_n}(A^*,{(A^*)}_\wp^+,\varphi)$ is similar. Recall that from $(\rho,V,V_{\wp}^+)$ we get an Artin Representation $(\sigma,U,U_{\wp}^+)$, where $U=$Hom$_{\FF}(V, \FF(\tau))$ and we have $V^* \cong U \otimes_\FF \FF(\kappa) := U(\kappa)$. Recall that using the pairing between $U$ and $V$, we defined a $\Delta_\wp$ filtration on $U$ (hence on $V^*(\varphi)$) 
$$0 \to U_\wp^+ \to U \to U_\wp^-:= U/U_\wp^+ \to 0,$$
where $U_\wp^+$ is the orthogonal compliment of $V_\wp^+$ under the pairing between $U$ and $V$.

\begin{corollary} \label{2.3}
Let $(\rho,V,V_{\wp}^+)$ be a triple which satisfies {\bf HYP 1 \& 2}. Assume that $H^0(\Delta_\wp, V) = 0$ and $H^0(\Delta_\wp, U)= 0$. Let $\varphi: \Gamma \to \OO^\times$ be a continuous group homomorphism. 
Then the map $Sel^{g}_{\Q_n}(A^*,{(A^*)}_\wp^+,\varphi) \to Sel^{GV}_{\Q_n}(A^*,{(A^*)}_\wp^+,\varphi)$ is injective, and the cokernel of the map is finite for all $n$ with their cardinality uniformly bounded independent of $n$. In particular,  $Sel^{g}_{\Q_\infty}(A^*,{(A^*)}_\wp^+,\varphi)$ is a finite index subgroup of $Sel^{GV}_{Q_\infty}(A^*,{(A^*)}_\wp^+,\varphi)$.
\end{corollary}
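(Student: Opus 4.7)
The plan is to deduce Corollary \ref{2.3} from Theorem \ref{2.1} applied to a suitably chosen ``dual'' triple, exploiting the identifications set up at the end of Section \ref{section1}. Recall that $V^{*} \cong U \otimes_{\FF} \FF(\kappa) = U(\kappa)$ as $G_{\Q}$-representations, and accordingly $(V^{*})_{\wp}^{+} \cong U_{\wp}^{+}(\kappa)$, so on the level of discrete modules $A^{*} \cong A'(\kappa)$ with $(A^{*})_{\wp}^{+} \cong (A')_{\wp}^{+}(\kappa)$. Since $\kappa$ factors through $\Gamma$, tensoring with $\OO(\kappa)$ followed by the twist by $\varphi$ is the same as working with the single character $\kappa\varphi: \Gamma \to \OO^{\times}$, and consequently
\[
Sel^{\dagger}_{\Q_{n}}\bigl(A^{*}, (A^{*})_{\wp}^{+}, \varphi\bigr) \;\cong\; Sel^{\dagger}_{\Q_{n}}\bigl(A', (A')_{\wp}^{+}, \kappa\varphi\bigr)
\]
for $\dagger \in \{g, GV\}$, compatibly for the comparison map. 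Thus it suffices to prove the comparison statement for the Artin representation $(\sigma, U, U_{\wp}^{+})$ with the twist $\kappa\varphi$.

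To this end, I would verify that $(\sigma, U, U_{\wp}^{+})$ satisfies HYP 1 \& 2. HYP 1 is inherited from $(\rho, V, V_{\wp}^{+})$. For HYP 2a, $d^{+}(\sigma) = \dim_{\FF} U_{\wp}^{+} = d^{-}(\rho) = d(\rho) - d^{+}(\rho)$, which satisfies $0 < d^{+}(\sigma) < d(\sigma)$ since $0 < d^{+}(\rho) < d(\rho)$. For HYP 2b, using the already-established identifications $U_{\wp}^{+} \cong \Hom(V_{\wp}^{-}, \FF(\tau))$ and $U_{\wp}^{-} \cong \Hom(V_{\wp}^{+}, \FF(\tau))$ as $\Delta_{\wp}$-representations, the vanishing of common irreducible components transfers from the $V$-filtration (which is HYP 2b for $\rho$) to the $U$-filtration, since taking $\Hom(-, \FF(\tau))$ is an autoequivalence on the category of finite-dimensional $\Delta_{\wp}$-representations and hence a bijection on irreducible constituents.

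Next I would check the two vanishing hypotheses needed to invoke Theorem \ref{2.1} with $(\sigma, U, U_{\wp}^{+})$ in place of $(\rho, V, V_{\wp}^{+})$. The first is $H^{0}(\Delta_{\wp}, U) = 0$, which is given. The second is $H^{0}(\Delta_{\wp}, \Hom_{\FF}(U, \FF(\tau))) = 0$; but $U = \Hom_{\FF}(V, \FF(\tau))$, and since $V$ is finite-dimensional the evaluation map furnishes a canonical $\Delta_{\wp}$-equivariant isomorphism $\Hom_{\FF}(U, \FF(\tau)) \cong V$, so this amounts to $H^{0}(\Delta_{\wp}, V) = 0$, again given.

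With all hypotheses checked, I can directly apply Theorem \ref{2.1} to the triple $(\sigma, U, U_{\wp}^{+})$ and the continuous character $\kappa\varphi: \Gamma \to \OO^{\times}$ to conclude that the natural map $Sel^{g}_{\Q_{n}}(A', (A')_{\wp}^{+}, \kappa\varphi) \to Sel^{GV}_{\Q_{n}}(A', (A')_{\wp}^{+}, \kappa\varphi)$ is injective with cokernel of finite order bounded independently of $n$. Transporting via the isomorphism of the first paragraph yields the statement for $A^{*}$, and passing to the inductive limit over $n$ gives the final assertion that $Sel^{g}_{\Q_{\infty}}(A^{*}, (A^{*})_{\wp}^{+}, \varphi)$ is a finite-index subgroup of $Sel^{GV}_{\Q_{\infty}}(A^{*}, (A^{*})_{\wp}^{+}, \varphi)$. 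There is no real obstacle here; the only care required is in matching the dualization conventions so that Theorem \ref{2.1}'s hypothesis on $\Hom_{\FF}(-, \FF(\tau))$ becomes the given condition $H^{0}(\Delta_{\wp}, V) = 0$.
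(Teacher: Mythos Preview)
Your proposal is correct and follows essentially the same approach as the paper: reduce to Theorem \ref{2.1} applied to the triple $(\sigma,U,U_\wp^+)$ with the character $\kappa\varphi$, using the identification $V^*(\varphi)\cong U(\kappa\varphi)$ and checking that the dual of $U$ in the sense of Theorem \ref{2.1} is $V$. Your write-up is simply more explicit than the paper's terse version about verifying HYP 1 \& 2 for $(\sigma,U,U_\wp^+)$ and matching the vanishing hypotheses.
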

\begin{proof}
Recall that $V^*(\varphi) \cong U (\varphi\kappa)$. One easily sees that $(\rho,V,V_{\wp}^+)$ satisfies hypothesis {\bf HYP 1 \& 2} implies that $(\sigma,U,U_{\wp}^+)$ also satisfies hypothesis {\bf HYP 1 \& 2}. Since $(V^*(\varphi))^+_\wp = U_\wp^+(\varphi \kappa)$ and $(V^*(\varphi))^-_\wp = U_\wp^-(\varphi \kappa)$, we get our desired result by replacing $V$ by $U$ and $\varphi$ by $\varphi \kappa$ in Theorem \ref{2.1}.

\end{proof}

\section{Control Theorems}
In this section, we prove a `control theorem' for the Selmer groups $Sel^{GV}_{L}(B,B_\wp^+,\varphi)$ and $Sel^{g}_{L}(B,B_\wp^+,\varphi)$ defined in section \ref{section1}, where $L \in \{ \Q_n , \Q_{\infty} \}$ and $B \in \{ A , A^* \}$. Mazur \cite{ma} first proved `control theorem' for Selmer groups associated to Abelian varieties. Since then many such `control theorems' have arisen naturally in the study of Selmer groups. For example, control theorem for Bloch-Kato Selmer group associated to $p$-adic representation \cite{oc},  for $p$-stabilized newform  \cite{jp}, for nearly ordinary Hilbert modular over the cyclotomic $\Z_p$ extension of a totally real number field \cite{jm} can be found in the literature. For Selmer group associated to Artin representations, Greenberg-Vatsal \cite{gv} proved a similar control theorem when $d^+ =1$.

\begin{theorem} \label{3.1}
Let $(\rho,V,V_{\wp}^+)$ be a triple which satisfies {\bf HYP 1 \& 2}. Assume that $H^0(\Delta_\wp, V) = 0$. Let $\varphi: \Gamma \to \OO^*$ be a continuous group homomorphism.  Then the map $Sel^{GV}_{\Q_n}(A,A_\wp^+,\varphi) \to Sel^{GV}_{\Q_\infty}(A,A_\wp^+,\varphi)^{\Gamma_n}$ is injective, and the cokernel of the map is finite for all $n$ with their cardinality uniformly bounded independent on $n$.
\end{theorem}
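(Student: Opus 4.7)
The proof follows the standard snake-lemma control-theorem strategy. I would begin with the fundamental commutative diagram
\begin{equation*}
\begin{tikzcd}[column sep=small]
0 \ar[r] & Sel^{GV}_{\Q_n}(A,A_\wp^+,\varphi) \ar[r] \ar[d, "a_n"] & H^1(\Q_\Sigma/\Q_n, A(\varphi)) \ar[r] \ar[d, "b_n"] & \mathcal{J}_n \ar[d, "c_n"] \\
0 \ar[r] & \big(Sel^{GV}_{\Q_\infty}(A,A_\wp^+,\varphi)\big)^{\Gamma_n} \ar[r] & H^1(\Q_\Sigma/\Q_\infty, A(\varphi))^{\Gamma_n} \ar[r] & (\mathcal{J}_\infty)^{\Gamma_n}
\end{tikzcd}
\end{equation*}
where $\mathcal{J}_n = \bigoplus_{v_n \mid \Sigma,\, v_n \nmid p\infty} H^1(I_{v_n}, A(\varphi)) \,\oplus\, H^1(I_{\p_n}, A_\wp^-(\varphi))$ is the target of the Greenberg--Vatsal local conditions at level $n$, and the vertical maps are induced by restriction. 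A snake-lemma chase gives $\ker(a_n) \hookrightarrow \ker(b_n)$ and $\Coker(a_n) \hookrightarrow \ker(c_n)$, so it suffices to show that $\ker(b_n)$ and $\ker(c_n)$ are finite with cardinalities uniformly bounded in $n$.

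For the global term, inflation--restriction yields $\ker(b_n) \cong H^1(\Gamma_n, A(\varphi)^{G_{\Q_\infty}})$. Because $(p, [K:\Q]) = 1$ by {\bf HYP 1}, $K \cap \Q_\infty = \Q$, so $G_{\Q_\infty}$ surjects onto $\Delta$ under restriction and $V^{G_{\Q_\infty}} = V^{\Delta}$. Since $\rho$ is irreducible with $d(\rho) \ge 2$ by {\bf HYP 2a}, the trivial representation is not a constituent, so $V^\Delta = 0$; hence $A(\varphi)^{G_{\Q_\infty}}$ is finite, and as $\Gamma_n \cong \Z_p$ is pro-cyclic, the above $H^1$ has order at most $|A(\varphi)^{G_{\Q_\infty}}|$, independently of $n$.

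For the local kernel I would proceed prime by prime. At a prime $v \in \Sigma$ with $v \nmid p\infty$, $v$ is unramified in $\Q_\infty/\Q$ so $I_{v_n} = I_{v_\infty}$, and a standard inflation--restriction argument (after accounting for how $v$ splits in $\Q_\infty/\Q$) places the contribution inside a cohomology group of the finite module $A(\varphi)^{I_{v_\infty}}/A(\varphi)^{G_{v_\infty}}_{\mathrm{div}}$ by a cyclic quotient of $\Gamma_n$; since $\Sigma$ is finite, the total is bounded. At $v = p$, $\p$ is totally ramified in $\Q_\infty/\Q$ so $I_{\p_n}/I_{\p_\infty} \cong \Gamma_n$, and inflation--restriction for $H^1(I_{\p_n}, A_\wp^-(\varphi)) \to H^1(I_{\p_\infty}, A_\wp^-(\varphi))$ gives
\begin{equation*}
\ker(c_{\p_n}) \hookrightarrow H^1\!\big(\Gamma_n,\, A_\wp^-(\varphi)^{I_{\p_\infty}}\big).
\end{equation*}
The hypothesis $H^0(\Delta_\wp, V) = 0$ together with Lemma \ref{lemma2.1} forces $H^0(G_{\p_\infty}, V_\wp^-(\varphi)) = 0$; combining this with the fact that $\Gamma_n = G_{\p_n}/I_{\p_n}$ acts on the $I_{\p_\infty}$-invariants through Frobenius and $G_{\p_n}/I_{\p_n}$ has $p$-cohomological dimension one, the $H^1$ reduces to a bounded quotient controlled by the finite group $H^0(G_{\p_\infty}, A_\wp^-(\varphi))$.

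The main technical obstacle is the analysis at $p$: since $I_{\p_\infty}$ can act non-trivially on $V_\wp^-(\varphi)$ through the tame inertia image of $\Delta_\wp$ (of order prime to $p$), the module $A_\wp^-(\varphi)^{I_{\p_\infty}}$ need not be finite, and one cannot simply dominate $H^1(\Gamma_n, A_\wp^-(\varphi)^{I_{\p_\infty}})$ by its cardinality. The key is to separate the divisible and finite parts of the $I_{\p_\infty}$-invariants and to exploit the Frobenius-action argument on the divisible part, reducing the required estimate to the finiteness of $H^0(G_{\p_\infty}, A_\wp^-(\varphi))$ supplied by Lemma \ref{lemma2.1}.
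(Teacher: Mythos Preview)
Your overall architecture is the paper's, but the argument at $p$ has a real gap. Writing $\Gamma_n = G_{\p_n}/I_{\p_n}$ is incorrect: since $p$ is totally ramified in $\Q_\infty/\Q$ one has $\Gamma_n \cong I_{\p_n}/I_{\p_\infty}$, whereas $G_{\p_n}/I_{\p_n}\cong\hat{\Z}$ is the unramified Frobenius quotient. These are different groups, and this matters. Your inflation--restriction on inertia gives $\ker(c_{\p_n})\cong H^1\big(I_{\p_n}/I_{\p_\infty},\,A_\wp^-(\varphi)^{I_{\p_\infty}}\big)$, and here $I_{\p_n}/I_{\p_\infty}\cong\Gamma_n$ acts on $A_\wp^-$ trivially (its image in $\Delta_\wp$ coincides with that of $I_{\p_\infty}$, as the quotient has $p$-power order and $p\nmid|\Delta_\wp|$) and on $\OO(\varphi)$ via $\varphi$. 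So if $\varphi$ has finite order and the inertia subgroup $I_\wp\subset\Delta_\wp$ fixes a nonzero subspace of $V_\wp^-$ (which is not excluded by $H^0(\Delta_\wp,V)=0$; take e.g.\ $K/\Q$ unramified at $p$), then for large $n$ this $H^1$ equals the full divisible module $(A_\wp^-)^{I_\wp}$ and is infinite. Your ``Frobenius-action'' fix cannot rescue this as stated, because Frobenius does not live in $I_{\p_n}/I_{\p_\infty}$.

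What actually saves the day is that only the part of $\ker(c_{\p_n})$ lying in the image of the global map matters, and that image lands in the $G_{\p_n}/I_{\p_n}$-invariants of $H^1(I_{\p_n},A_\wp^-(\varphi))$. The paper therefore replaces the inertia comparison by the square
\[
\begin{tikzcd}
H^1(G_{\p_n},A_\wp^-(\varphi)) \ar[r,"a_n"] \ar[d,"b_n"] & H^1(G_{\p_\infty},A_\wp^-(\varphi)) \ar[d,"\theta"] \\
H^1(I_{\p_n},A_\wp^-(\varphi))^{G_{\p_n}/I_{\p_n}} \ar[r,"\gamma_{\p_n}"] & H^1(I_{\p_\infty},A_\wp^-(\varphi))^{G_{\p_\infty}/I_{\p_\infty}}
\end{tikzcd}
\]
and shows $a_n$ injective (because $H^0(G_{\p_\infty},A_\wp^-(\varphi))=0$ under the hypothesis, using $p\nmid|\Delta|$), $b_n$ surjective, and $\ker\theta=H^1\big(G_{\p_\infty}/I_{\p_\infty},(A_\wp^-(\varphi))^{I_{\p_\infty}}\big)$ finite because its corank equals that of $H^0(G_{\p_\infty},A_\wp^-(\varphi))=0$. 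Two smaller points: the snake lemma yields a surjection from (a subgroup of) $\ker(c_n)$ onto $\Coker(a_n)$, not an injection $\Coker(a_n)\hookrightarrow\ker(c_n)$, and you need the surjectivity of $b_n$ (from $\mathrm{cd}_p(\Gamma_n)=1$) to kill $\Coker(b_n)$; also, for genuine injectivity of $a_n$ you should note $A(\varphi)^{G_{\Q_\infty}}=A(\varphi)^\Delta=0$ (not merely finite), which follows from $V^\Delta=0$ together with $H^1(\Delta,T(\varphi))=0$ since $p\nmid|\Delta|$.
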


\begin{proof}
Consider this following commutative diagram:

\begin{tikzpicture}[baseline= (a).base]
\node[scale=.8] (a) at (0,0){
\begin{tikzcd}[ arrows={-stealth}]
0 \rar & Sel^{GV}_{\Q_n}(A,A_\wp^+,\varphi) \rar \dar["{\alpha_n}"] &  H^1(\Q_\Sigma/\Q_{n}, A(\varphi)) \rar \dar["{\beta_n}"] &  \underset { v_n \mid \Sigma, v_n \nmid p\infty}{\oplus}  H^1(I_{v_n},A(\varphi)) \underset { v_n= \p_n}{\oplus} H^1(I_{\p_n}, A_\wp^-(\varphi))  \dar["{\gamma_n = \oplus \gamma_{v_n}}"] \\
0 \rar &Sel^{GV}_{\Q_\infty}(A,A_\wp^+,\varphi)^{\Gamma_n} \rar & H^1(\Q_\Sigma/\Q_{\infty}, A(\varphi))^{\Gamma_n} \rar &  \underset { v_\infty \mid \Sigma, v_\infty \nmid p\infty}{\oplus}  H^1(I_{v_\infty},A(\varphi))^{\Gamma_n}  \underset { v_\infty = \p_ \infty}{\oplus} H^1(I_{\p_\infty}, A_\wp^-(\varphi))^{\Gamma_n} 
\end{tikzcd} };
\end{tikzpicture} \\
By inflation-restriction exact sequence, the kernel of $\beta_n$ is $H^1(\Gal(\Q_{\infty}/\Q_{n}), A(\varphi)^{G_{\Q_{\infty}}})$. 
Since $p \nmid |\Delta|$, we have $ \Delta \cong \Gal(K_n/ \Q_n)$, where $K_n=K \Q_n$ is the $n$-th layer of the cyclotomic $\Z_p$-extension $K_{\infty}$ of $K$. Now for any $n$, the action of $G_{\Q_{n}}$ factors through $\Gal(K_n/\Q_n)$ and hence $ A(\varphi)^{G_{\Q_{n}}} =  A(\varphi)^{\Gal(K_n/\Q_n)} =  A(\varphi)^{\Delta}= A(\varphi)^{G_\Q}$. As a consequence, we obtain $A(\varphi)^{G_{\Q_{\infty}}} = A(\varphi)^{G_\Q} = A(\varphi)^{\Delta}$. Consider the exact sequence
\begin{equation*}
 H^0( \Delta, V(\varphi) ) \to H^0( \Delta, A(\varphi) ) \to  H^1( \Delta, T(\varphi) ).
\end{equation*}

The group $H^0( \Delta, V(\varphi))=0$ because $\rho$ is a non trivial irreducible representation and $H^1( \Delta, T(\varphi) )=0$ by ( \cite[Proposition 1.6.2]{nsw}), therefore  $A(\varphi)^{G_{\Q_{\infty}}}=A(\varphi)^{G_{\Q}}=0$. Hence $\beta_{n}$ is injective. As a consequence, we see that $\alpha_n$ is also injective.

We want to show that coker($\alpha_{n}$) is finite and independent on $n$. First, observe that the cohomological dimension of $\Gal(\Q_{\infty}/\Q_{n})$ is one; therefore, $\beta_{n}$ is surjective. Hence it is sufficient to bound the kernel of $\gamma_{n}.$

First, we consider the case $ v_n\nmid p$. In this situation, by inflation restriction, we have $\Ker(\gamma_{v_n}) \cong  H^1(I_{v_\infty}/I_{v_n}, A(\varphi)^{I_{v_\infty}})$. Since $v_n \nmid p$, the extension $\Q_{\infty}/\Q_n$ is unramified. Therefore, $I_{v_\infty/v_n}=0$. Hence the map $\gamma_{v_n}$ is injective for $v_n \nmid p$.

Finally, consider the situation $\gamma_{v_n}$, where $v_n \mid p$, that is, $v_n = \p_n$. Recall that $V$ has a $\Delta_\wp$-invariant subspace $V_\wp^+$ of dimension $d^+$, which gives us the following spaces $V_\wp^-(\varphi) := V(\varphi)/V_\wp^+(\varphi)$ , $T_\wp^-(\varphi) := T(\varphi)/T_\wp^+(\varphi)$ and $A_\wp^-(\varphi) := A(\varphi)/A_\wp^+(\varphi)$. Consider the following commutative diagram 
\begin{center}
\begin{tikzcd}
 H^1(G_{\p_n}, A_\wp^-(\varphi)) \rar["a_n"] \dar["{b_n}"] &  H^1(G_{\p_\infty}, A_\wp^-(\varphi)) \dar["\theta"]  \\
H^1(I_{\p_n},A_\wp^-(\varphi))^{G_{\p_n}/I_{\p_n}} \rar["\gamma_{\p_n}"] &  H^1(I_{\p_\infty},A_\wp^-(\varphi))^{G_{\p_\infty}/I_{\p_\infty}}.
\end{tikzcd}
\end{center}
The kernel of the map $a_n$ is ker $(a_n)=H^1(G_{\p_n}/G_{\p_\infty}, A_\wp^-(\varphi)^{G_{\p_\infty}})$. From the exact sequence $0 \to T_\wp^-(\varphi) \to V_\wp^-(\varphi) \to  A_\wp^-(\varphi) \to 0$, we obtain an exact sequence 
\begin{equation*}
0 =H^0(\Delta_\wp, V_\wp^-(\varphi)) \to H^0(\Delta_\wp,  A_\wp^-(\varphi)) \to  H^1(\Delta_\wp, T_\wp^-(\varphi)).
\end{equation*} 

The fact $0 =H^0(\Delta_\wp, V_\wp^-(\varphi))$ follows from our assumption $H^0(\Delta_\wp, V)=0$ and Lemma \ref{lemma2.1}. The order of $\Delta_\wp$ is co-prime to $p$, therefore $H^1(\Delta_\wp, T_\wp^-(\varphi))=0$ (See \cite[Proposition 1.6.2]{nsw}). Hence, $H^0(\Delta_\wp,  A_\wp^-(\varphi))=0$. The $G_{\p_\infty}$ action factors through the group $\Delta_\wp$, it follows that $H^0(G_{\p_\infty}, A_\wp^-(\varphi))=0$. Therefore $a_n$ is injective. 
Since the cohomological dimension of $G_{\p_n}/I_{\p_n}$ is one, the map $b_n$ is surjective. By inflation-restriction map ker$(\theta)=H^1(G_{\p_\infty}/I_{\p_\infty}, {A_\wp^-(\varphi)}^{I_{\p_\infty}}).$ Since $G_{\p_\infty}/I_{\p_\infty}$ is a procyclic group, corank $H^1(G_{\p_\infty}/I_{\p_\infty}, {A_\wp^-(\varphi)}^{I_{\p_\infty}})$ = corank $H^0(G_{\p_\infty}/I_{\p_\infty}, {A_\wp^-(\varphi)}^{I_{\p_\infty}})$ = corank $H^0(G_{\p_\infty} A_\wp^-(\varphi)) =0$. 
Therefore  Ker$(\theta)=H^1(G_{\p_\infty}/I_{\p_\infty}, {(A_\wp^-(\varphi))}^{I_{\p_\infty}})$ is finite. 
So by surjectivity of $b_n$ and commutativity of the diagram, we can say that kernel of $\gamma_{\p_n}$ is bounded and independent on $n$.
\end{proof}
\begin{corollary}\label{3.2}
Let $(\rho,V,V_{\wp}^+)$ be a triple which satisfies {\bf HYP 1 \& 2}. Assume that $H^0(\Delta_\wp, U) = 0$. Let $\varphi: \Gamma \to \OO^*$ be a continuous group homomorphism. Then the map $Sel^{GV}_{\Q_n}(A^*,{(A^*)}_\wp^+,\varphi) \to Sel^{GV}_{\Q_\infty}(A^*,{(A^*)}_\wp^+,\varphi)^{\Gamma_n}$ is injective, and the cokernel of the map is finite for all $n$ with their cardinality uniformly bounded independent on $n$.
\end{corollary}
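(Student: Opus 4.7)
The strategy is to deduce Corollary \ref{3.2} from Theorem \ref{3.1} applied to the auxiliary Artin representation $(\sigma, U, U_\wp^+)$, in exactly the same way Corollary \ref{2.3} was deduced from Theorem \ref{2.1}. First I would verify that $(\sigma, U, U_\wp^+)$ satisfies \textbf{HYP 1 \& 2} whenever $(\rho, V, V_\wp^+)$ does: \textbf{HYP 1} depends only on $K$ and $p$; irreducibility of $\sigma$ follows from that of $\rho$; \textbf{HYP 2a} holds because $\dim U_\wp^+ = d^-(\rho)$ and $0 < d^-(\rho) < d(\rho)$; and \textbf{HYP 2b} follows from the identifications $U_\wp^+ \cong \Hom(V_\wp^-, \FF(\tau))$ and $U_\wp^- \cong \Hom(V_\wp^+, \FF(\tau))$ established in Section \ref{section1}, which transfer the no-common-component condition from $V$ to $U$.

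Next I would match the Selmer groups. Using $V^*(\varphi) \cong U \otimes_\FF \FF(\varphi\kappa)$ together with the compatible filtration identifications $(V^*)_\wp^\pm(\varphi) \cong U_\wp^\pm(\varphi\kappa)$, and the fact that $\kappa$ factors through $\Gamma$ (so tensoring with $\OO(\kappa)$ only twists the $\Gamma$-action while leaving the underlying cohomology groups and local conditions unchanged), there is a canonical isomorphism
\begin{equation*}
Sel^{GV}_{L}(A^*, (A^*)_\wp^+, \varphi) \;\cong\; Sel^{GV}_{L}(A', (A')_\wp^+, \varphi\kappa)
\end{equation*}
for $L \in \{\Q_n, \Q_\infty\}$, compatible with the natural restriction maps and the $\Gamma_n$-action on the right-hand side. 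This is the identification already invoked after Corollary \ref{2.3}, applied with character $\varphi$ rather than the trivial character.

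Now I would apply Theorem \ref{3.1} to the triple $(\sigma, U, U_\wp^+)$ with the continuous character $\varphi\kappa : \Gamma \to \OO^\times$. The hypothesis needed is $H^0(\Delta_\wp, U) = 0$, which is precisely the hypothesis of Corollary \ref{3.2}. The theorem then yields injectivity of
\begin{equation*}
Sel^{GV}_{\Q_n}(A', (A')_\wp^+, \varphi\kappa) \;\longrightarrow\; Sel^{GV}_{\Q_\infty}(A', (A')_\wp^+, \varphi\kappa)^{\Gamma_n}
\end{equation*}
with cokernel finite and of cardinality uniformly bounded in $n$. Transporting along the identification above gives the corollary.

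The proof is formal and I do not anticipate any substantive obstacle; the only point requiring care is checking that the Selmer-group isomorphism is compatible with the natural restriction maps, which is immediate from functoriality since twisting by a $\Gamma$-character does not alter the cohomological machinery used to define the GV local conditions.
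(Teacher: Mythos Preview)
Your proposal is correct and takes essentially the same approach as the paper: the paper's proof of Corollary~\ref{3.2} simply reads ``similar to the proof of Theorem~\ref{3.1}'', and your reduction to Theorem~\ref{3.1} applied to $(\sigma,U,U_\wp^+)$ with character $\varphi\kappa$ is exactly the mechanism by which this similarity is realized, following verbatim the pattern the paper uses to deduce Corollary~\ref{2.3} from Theorem~\ref{2.1}.
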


\begin{proof}
The proof is similar to the proof of Theorem \ref{3.1}.

\end{proof}

Next, we prove a control theorem for $Sel^{g}_{L}(B,B_\wp^+,\varphi)$, where $L \in \{ \Q_n , \Q_{\infty} \}$ and $B \in \{ A , A^* \}$. To do this, we need the following result regarding the duality of the local Selmer condition at the prime $p$.

\begin{lemma}\label{3.3}
Assume that $H^0(\Delta_\wp ,V ) = H^0(\Delta_\wp, U)=0$. Let $\varphi: \Gamma \to \OO^*$ be any continuous group homomorphism. For any $n \ge 0$, let $v_n \mid \Sigma$ be a finite place in $\Q_n$.  Then under the duality between $H^1(G_{v_n}, V(\varphi))$ and   $H^1(G_{v_n}, V(\varphi)^*)$, the orthogonal complement of $H^1_g(G_{v_n}, V(\varphi))$ is $H^1_g(G_{v_n}, V(\varphi)^*)$. \\
As a consequence, under the perfect pairing between $H^1(G_{v_n}, T(\varphi)^*)$ and   $H^1(G_{v_n}, A(\varphi))$, the orthogonal complement of $H^1_g(G_{v_n}, T(\varphi)^*)$ is $H^1_g(G_{v_n}, A(\varphi))$, where $H^1_g(G_{v_n}, T(\varphi)^*)$ is defined as 
\begin{equation*}
H^1_g(G_{\p_n}, T(\varphi)^*) = \Ker \Big( H^1(G_{\p_n}, T(\varphi)^*) \xrightarrow H^1(G_{\p_n}, V(\varphi)^*) \to H^1(I_{\p_n}, {(V(\varphi)^*)}_\wp^-) \Big ).
\end{equation*}
\end{lemma}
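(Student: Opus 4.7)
The plan is to split into cases according to whether $v_n$ lies over $p$. For $v_n \nmid p$, the condition $H^1_g(G_{v_n}, V(\varphi))$ coincides with the unramified cohomology $H^1(G_{v_n}/I_{v_n}, V(\varphi)^{I_{v_n}})$, and similarly for its Tate dual; the self-orthogonality of the unramified subspace under local Tate duality is classical (see, e.g., \cite[Theorem 7.2.15]{nsw}). So the substance of the argument lies at $v_n = \p_n$.

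For $v_n = \p_n$, the strategy is to establish (i) the orthogonality inclusion $H^1_g(G_{\p_n}, V(\varphi)^*) \subseteq H^1_g(G_{\p_n}, V(\varphi))^{\perp}$ and (ii) the dimension identity
\begin{equation*}
\dim_{\FF} H^1_g(G_{\p_n}, V(\varphi)) + \dim_{\FF} H^1_g(G_{\p_n}, V(\varphi)^*) = \dim_{\FF} H^1(G_{\p_n}, V(\varphi)),
\end{equation*}
after which the perfectness of local Tate duality forces equality. For (i), the long exact sequence associated to $0 \to V_\wp^+(\varphi) \to V(\varphi) \to V_\wp^-(\varphi) \to 0$, combined with Lemma \ref{lemma2.1} (giving $H^0(G_{\p_n}, V_\wp^-(\varphi)) = 0$) and the procyclicity of $G_{\p_n}/I_{\p_n}$ (forcing $H^1_{ur}(G_{\p_n}, V_\wp^-(\varphi))$ to have the same corank as the vanishing $H^0$), identifies $H^1_g(G_{\p_n}, V(\varphi))$ with the image of $H^1(G_{\p_n}, V_\wp^+(\varphi))$. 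Symmetrically, using $H^0(\Delta_\wp, U) = 0$ and the identification $V(\varphi)^* \cong U(\kappa \varphi^{-1})$, the group $H^1_g(G_{\p_n}, V(\varphi)^*)$ is the image of $H^1(G_{\p_n}, (V(\varphi)^*)_\wp^+)$. The key observation is then that $(V^*)_\wp^+ = U_\wp^+ \otimes \FF(\kappa)$ is by construction the annihilator of $V_\wp^+$ under the evaluation pairing $V \otimes V^* \to \FF(\tau\kappa) = \mu_{p^\infty}$, so the induced cup product
\begin{equation*}
H^1(G_{\p_n}, V_\wp^+(\varphi)) \times H^1(G_{\p_n}, (V(\varphi)^*)_\wp^+) \longrightarrow H^2(G_{\p_n}, \mu_{p^\infty})
\end{equation*}
vanishes identically, which by functoriality of the cup product yields the desired orthogonality of the images.

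For (ii), the local Euler--Poincar\'e formula, together with the vanishings $H^0(G_{\p_n}, V(\varphi)) = 0$ and $H^2(G_{\p_n}, V(\varphi)) \cong H^0(G_{\p_n}, V(\varphi)^*)^{\vee} = 0$ (both furnished by the hypotheses via Lemma \ref{lemma2.1} and local Tate duality), yields $\dim_{\FF} H^1(G_{\p_n}, V(\varphi)) = d \cdot [\Q_{\p_n} : \Q_p]$. Applying the same reasoning to $V_\wp^+(\varphi)$ and $(V(\varphi)^*)_\wp^+$, with the analogous $H^0$-vanishings propagated through the filtration, gives $\dim_{\FF} H^1_g(G_{\p_n}, V(\varphi)) = d^+(\rho) \cdot [\Q_{\p_n} : \Q_p]$ and $\dim_{\FF} H^1_g(G_{\p_n}, V(\varphi)^*) = d^-(\rho) \cdot [\Q_{\p_n} : \Q_p]$, which sum to $d \cdot [\Q_{\p_n} : \Q_p]$ as required.

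The consequence for $T(\varphi)^*$ and $A(\varphi)$ is then formal. Local Tate duality provides a perfect pairing between $H^1(G_{v_n}, T(\varphi)^*)$ and $H^1(G_{v_n}, A(\varphi))$, and the $g$-conditions on either side are compatible images from the $V$-level conditions via the natural maps $T \to V$ and $V \to A$; the orthogonality therefore transfers by functoriality of the cup product, as in \cite{fl}. The main obstacle I anticipate is the careful bookkeeping needed to translate the two hypotheses $H^0(\Delta_\wp, V) = H^0(\Delta_\wp, U) = 0$ through the various twists and filtration quotients to secure every $H^0$- and $H^2$-vanishing demanded by the dimension count; once the identification of $H^1_g$ as the image of the $+$-piece is in hand, the orthogonality itself is clean.
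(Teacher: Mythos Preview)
Your proposal is correct and follows essentially the same approach as the paper: split into $v_n \nmid p$ (classical unramified orthogonality) and $v_n = \p_n$, and at $p$ identify $H^1_g$ with the image of $H^1(G_{\p_n}, V_\wp^+(\varphi))$ via the long exact sequence of the filtration together with the vanishing of the relevant $H^0$ and $H^2$ supplied by the hypotheses and Lemma~\ref{lemma2.1}, then use that $(V^*)_\wp^+$ annihilates $V_\wp^+$ to get orthogonality. The only cosmetic difference is in the dimension count: the paper reads off $\dim H^1(G_{\p_n},V(\varphi)) = \dim H^1(G_{\p_n},V_\wp^+(\varphi)) + \dim H^1(G_{\p_n},V_\wp^-(\varphi))$ directly from the exact sequence and then invokes $\dim H^1(W) = \dim H^1(W^*)$, whereas you compute each term explicitly as $d\cdot[\Q_{n,\p_n}:\Q_p]$ etc.\ via Euler--Poincar\'e; both arguments use the same $H^0$ and $H^2$ vanishings and are interchangeable.
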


\begin{proof} 
First, we consider the case where $v_n \nmid p .$ Recall that $H^1_g(G_{v_n}, V(\varphi)):= \Ker (H^1(G_{v_n}, V(\varphi)) \to H^1(I_{v_n}, V(\varphi)) ) \cong 
H^1(G_{v_n}/I_{v_n},V(\varphi)^{I_{v_n}})$ 
by the inflation-restriction exact sequence. Hence the image of $H^1_g(G_{v_n}, V(\varphi)) \otimes H^1_g(G_{v_n}, {(V(\varphi))}^*)$ in $H^2(G_{v_n},\FF(\tau\kappa))$ is $0$, since it lies in $H^2(G_{v_n}/I_{v_n},\FF(\tau\kappa))$ and the cohomological dimension of $G_{v_n}/I_{v_n}$ is one.
To conclude that the orthogonal compliment of $H^1_g(G_{v_n}, V(\varphi))$ is $H^1_g(G_{v_n}, V(\varphi)^*)$, we only have to show that $\dim H^1_g(G_{v_n}, V(\varphi))$ + $\dim H^1_g(G_{v_n}, V(\varphi)^*)$ = $\dim H^1(G_{v_n}, V(\varphi))$.
Note that,
\begin{equation}\label{eq1} 
 \dim H^1_g(G_{v_n}, V(\varphi)) = \dim H^1(G_{v_n}/I_{v_n},V(\varphi)^{I_{v_n}}) =H^0(G_{v_n},V(\varphi)).
\end{equation} 

From equation \eqref{eq1} we obtain, $\dim H^1_g(G_{v_n}, V(\varphi)) + \dim H^1_g(G_{v_n}, V(\varphi)^*) = \dim H^0(G_{v_n}, V(\varphi))+ \dim H^0(G_{v_n}, V(\varphi)^*)$. This is same as $\dim H^1(G_{v_n}, V(\varphi))$, by Euler-Poincar$\acute{e}$ characteristic formula. This completes the proof of the lemma in this case.

Next, consider the case  where $v_n = \p_n \mid p$. Recall that in this case $H^1_g(G_{\p_n}, V(\varphi)):=  \Ker(H^1(G_{\p_n}, V(\varphi)) \xrightarrow{\alpha} H^1(G_{\p_n}, V(\varphi)_\wp^-) \xrightarrow{\beta} H^1(I_{\p_n}, V(\varphi)_\wp^-))$. By the inflation-restriction sequence we have $\Ker(\beta) = H^1(G_{\p_n}/I_{\p_n},  {(V(\varphi)_\wp^-)}^{I_{\p_n}}) $. Now $\dim H^1(G_{\p_n}/I_{\p_n}, {(V(\varphi)_\wp^-)}^{I_{\p_n}})  = \dim H^0(G_{\p_n},V(\varphi)_\wp^-)$, which is $0$ by our assumption that $H^0(\Delta_\wp,V)=0$ and Lemma \ref{lemma2.1}. Hence $\beta$ is injective and as a consequence we obtain that 
\begin{equation}\label{eq1'}
H^1_g(G_{\p_n}, V(\varphi))=  \Ker(H^1(G_{\p_n}, V(\varphi)) \xrightarrow{\alpha} H^1(G_{\p_n}, V(\varphi)_\wp^-)) = \Ker(\alpha).
\end{equation}
From our assumption $H^0(\Delta_\wp, U)=0$ and Lemma \ref{lemma2.1}, we get that $H^0(G_{\p_\infty},U_\wp^-(\kappa))=0$. Therefore $H^2(G_{\p_n}, V(\varphi)_\wp^+) = H^0(G_{\p_n}, {(V(\varphi)_\wp^+)}^*)=0$ because of Lemma \ref{lemma2.1} and the fact that $({V(\varphi)_\wp^+})^* \cong {(V(\varphi)^*})_\wp^-$. We obtain a long exact sequence of cohomology
\begin{equation}\label{eq2} 
 0= H^0(G_{\p_n},V(\varphi)_\wp^-) \to H^1(G_{\p_n}, V(\varphi)_\wp^+) \to  H^1(G_{\p_n}, V(\varphi)) \xrightarrow {\alpha} H^1(G_{\p_n}, V(\varphi)_\wp^-)\to H^2(G_{\p_n}, V(\varphi)_\wp^+)=0.
\end{equation}
 
Now using equations \eqref{eq1'} and \eqref{eq2}, we obtain 
\begin{equation}\label{eq2'}
H^1_g(G_{\p_n}, V(\varphi)) = \Ker(\alpha) = H^1(G_{\p_n}, V(\varphi)_\wp^+).
\end{equation}
Similarly, we obtain
\begin{equation}\label{eq2''}
 \begin{split}
	H^1_g(G_{\p_n}, {V(\varphi)}^*) & = \Ker \Big ( H^1(G_{\p_n}, V(\varphi)^*) \to H^1(G_{\p_n}, (V(\varphi)^*)_\wp^-) \to H^1(I_{\p_n}, (V(\varphi)^*)_\wp^-)  \Big ), \\
	& = \Ker \Big ( H^1(G_{\p_n}, V(\varphi)^*) \to H^1(G_{\p_n}, (V(\varphi)^*)_\wp^-) \Big ) = H^1(G_{\p_n},({V(\varphi)}_\wp^-)^*),
	\end{split}	
\end{equation}
for the last equality, we have used $({V(\varphi)}_\wp^-)^* \cong (V(\varphi)^*)_\wp^+$.\\

First, we will show that $H^1(G_{\p_n}, V(\varphi)_\wp^+)$ and $H^1(G_{\p_n}, ({V(\varphi)}_\wp^-)^*)$ are orthogonal complement to each other under the duality between $H^1(G_{\p_n}, V(\varphi))$ and   $H^1(G_{\p_n}, V(\varphi)^*)$. \\

Under the pairing between $V(\varphi)$ and $V(\varphi)^*$, orthogonal complement of $V(\varphi)_\wp^+$ is $({V(\varphi)}_\wp^-)^* \cong (V(\varphi)^*)_\wp^+$. So the image of $H^1(G_{\p_n}, V(\varphi)_\wp^+) \otimes H^1(G_{\p_n}, ({V(\varphi)}_\wp^-)^*)$ is $0$. To complete the proof, we need to show
\begin{equation}\label{eq2a}
\dim H^1(G_{\p_n}, V(\varphi)_\wp^+) + \dim H^1(G_{\p_n}, ({V(\varphi)}_\wp^-)^*) = \dim  H^1(G_{\p_n}, V(\varphi)).
\end{equation}

From equation \eqref{eq2}, we see that 
\begin{equation}\label{eq2b}
 \dim H^1(G_{\p_n}, V(\varphi)) = \dim  H^1(G_{\p_n}, V(\varphi)_\wp^+) + \dim H^1(G_{\p_n}, V(\varphi)_\wp^-).
\end{equation}
Equation \eqref{eq2a} follows from the equation \eqref{eq2b} and the fact $\dim H^1(G_{\p_n}, ({V(\varphi)}_\wp^-)^*) = \dim H^1(G_{\p_n}, {V(\varphi)}_\wp^-)$.

\end{proof}

Now we proceed to prove a control theorem which connects $Sel^{g}_{\Q_n}(A,A_\wp^+,\varphi)$ and $Sel^{g}_{\Q_\infty}(A,A_\wp^+,\varphi)$.

\begin{theorem} \label{3.4}
Let $(\rho,V,V_\wp^+)$ be a triple which satisfies {\bf HYP 1 \& 2}. Assume that  $H^0(\Delta_\wp,V) = H^0(\Delta_\wp, U)=0$. Let $\varphi: \Gamma \to \OO^*$ be a continuous group homomorphism.  Then the map $Sel^{g}_{\Q_n}(A,A_\wp^+,\varphi)$ \\ $ \to Sel^{g}_{\Q_\infty}(A,A_\wp^+,\varphi)^{\Gamma_{n}}$ is injective, and the cokernel of this map is finite for all $n$ with cardinality bounded independent of $n$.
\end{theorem}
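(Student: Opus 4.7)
The plan is to deduce Theorem \ref{3.4} from the comparison Theorem \ref{2.1} and the Greenberg--Vatsal control theorem (Theorem \ref{3.1}) via a snake-lemma argument, rather than repeating the local cohomology bookkeeping of Theorem \ref{3.1} with the local conditions $H^1_g$ in place of $H^1_{GV}$. Notice that the hypotheses of Theorem \ref{3.4}, namely $H^0(\Delta_\wp,V) = H^0(\Delta_\wp,U) = 0$, are exactly what is needed to apply both prior results simultaneously, so no new local input should be required.

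Concretely, Theorem \ref{2.1} yields a short exact sequence
$$
0 \to Sel^{g}_{\Q_n}(A,A_\wp^+,\varphi) \to Sel^{GV}_{\Q_n}(A,A_\wp^+,\varphi) \to C_n \to 0,
$$
with $C_n$ finite of order uniformly bounded in $n$, together with the analogous sequence at the infinite level, whose cokernel $C_\infty$ is also finite (it is a direct limit of finite groups of uniformly bounded order, hence finite). Applying the left-exact functor $(-)^{\Gamma_n}$ to the $\Q_\infty$-sequence and using naturality of the restriction maps produces a commutative diagram with exact rows, whose vertical arrows I denote $f_n$, $g_n$, $h_n$ from left to right: here $f_n$ is the map we wish to analyse, $g_n$ the analogous map for $Sel^{GV}$, and $h_n : C_n \to C_\infty^{\Gamma_n}$ the induced map on the cokernel columns.

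Next I would apply the snake lemma to obtain the exact sequence
$$
0 \to \Ker f_n \to \Ker g_n \to \Ker h_n \to \Coker f_n \to \Coker g_n.
$$
By Theorem \ref{3.1}, $g_n$ is injective and $\Coker g_n$ is finite of order bounded independent of $n$. Hence $\Ker f_n = 0$, giving the desired injectivity. Moreover, $\Coker f_n$ sits in a short exact sequence $0 \to \Ker h_n \to \Coker f_n \to \Coker g_n$; since $\Ker h_n \subseteq C_n$ and $\Coker g_n$ are both finite with uniformly bounded order, $\Coker f_n$ is itself finite with cardinality bounded independent of $n$, completing the proof.

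The main obstacle, if any, is the sanity check that Theorem \ref{2.1} really does provide a uniform bound on $|C_\infty|$, not merely on each $|C_n|$. But this is immediate from the fact that $C_\infty = \varinjlim C_n$ is a direct limit of finite abelian groups of uniformly bounded order, so its cardinality equals $|C_n|$ for $n$ sufficiently large; in particular $|C_\infty^{\Gamma_n}| \le |C_\infty|$ is bounded. Everything else is a formal diagram chase.
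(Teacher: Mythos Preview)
Your argument is correct and is a genuinely different---and cleaner---route than the paper's. The paper proves Theorem \ref{3.4} from scratch by rerunning the control-theorem diagram chase of Theorem \ref{3.1} with $H^1_g$ in place of $H^1_{GV}$: for $v_n\nmid p$ it invokes \cite[Lemma 2.8]{oc}, while at $p$ it passes through Lemma \ref{3.3} (local duality) to identify $H^1_g(G_{\p_n},T(\varphi)^*)$ with $H^1(G_{\p_n},(T(\varphi)^*)_\wp^+)$ and then bounds the cokernel of the corestriction map on these groups directly. Your approach instead packages the difference between the two Selmer groups into the uniformly bounded quotients $C_n$ and $C_\infty$ supplied by Theorem \ref{2.1}, applies $(-)^{\Gamma_n}$ to the $\Q_\infty$-sequence, and reads off both injectivity and the cokernel bound from the snake lemma and Theorem \ref{3.1}. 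The trade-off: the paper's proof is self-contained at the local level and makes the dependence on Lemma \ref{3.3} explicit, whereas your argument is shorter and shows transparently that Theorem \ref{3.4} is a formal consequence of Theorems \ref{2.1} and \ref{3.1} once the hypotheses line up (as they do here). No new local input is needed, and your sanity check on $|C_\infty|$ is exactly right.
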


\begin{proof}

Let us consider the following commutative diagram:
\begin{center}
\xymatrix { 
0 \ar[r] & Sel^{g}_{\Q_n}(A,A_\wp^+,\varphi)  \ar[r] \ar[d]_{\alpha_n} & H^1(\Q_\Sigma/\Q_{n}, A(\varphi))   \ar[r] \ar[d]_{\beta_{n}} &  \underset { v_n \mid \Sigma, v_n \nmid p\infty}{\oplus} \frac{ H^1(G_{v_n}, A(\varphi))}{H^1_g(G_{v_n},A(\varphi))} \ar[d]_{\gamma_n=\oplus \gamma_{v_n}} \underset { v_n = \p_n}{\oplus} \frac{H^1(G_{\p_n}, A(\varphi))}{H^1_g(G_{\p_n}, A(\varphi))} 
\\ 
0 \ar[r] & Sel^{g}_{\Q_\infty}(A,A_\wp^+,\varphi)^{\Gamma_n} \ar[r] &  H^1(\Q_\Sigma/\Q_{\infty}, A(\varphi))^{\Gamma_n} \ar [r] & \Bigg( \underset { v_\infty \mid \Sigma, v_\infty \nmid p\infty}{\oplus} \frac{ H^1(G_{v_\infty}, A(\varphi))}{H^1_g(G_{v_\infty},A(\varphi))} \underset { v_\infty = p_\infty}{\oplus} \frac{H^1(G_{\p_\infty}, A(\varphi))}{H^1_g(G_{\p_\infty}, A(\varphi))} \Bigg)^{\Gamma_{n}} } 
\end{center}

As in Theorem \ref{3.1}, we can show that $\alpha_n$ is injective, and to prove that coker($\alpha_{n}$) is finite and independent on $n$, it is sufficient to bound the kernel of $\gamma_{n}.$ 
 
First, we consider the case $v_n \nmid p$. In this case, definition of $H^1_g$ matches with the definition of $H^1_f$ as in \cite[page 73]{oc}. The fact that kernel of $\gamma_{v_n}$ is bounded and  independent on $n$, follows from \cite[Lemma 2.8]{oc}.\\
Finally, we need to bound the kernel of $\gamma_{v_n}$ when $v_n = \p_n$. To bound the kernel, it is enough to bound the cokernel of the dual map 
\begin{equation*}
\Bigg ( \frac{ H^1(G_{\p_\infty}, A(\varphi))}{H^1_g(G_{\p_\infty},A(\varphi))} \Bigg )^\vee \xrightarrow {\gamma_{\p_n}^\vee} \Bigg ( \frac{ H^1(G_{\p_n}, A(\varphi))}{H^1_g(G_{\p_n},A(\varphi))} \Bigg )^\vee.
\end{equation*}
By Lemma \ref{3.3}, under the perfect pairing between $H^1(G_{v_n}, T(\varphi)^*)$ and   $H^1(G_{v_n}, A(\varphi))$, the orthogonal complement of $H^1_g(G_{v_n}, T(\varphi)^*)$ is $H^1_g(G_{v_n}, A(\varphi))$. Therefore, $H^1_g(G_{v_n}, T(\varphi)^*)$ is the pontryagin dual of $H^1(G_{\p_n}, A(\varphi))/H^1_g(G_{\p_n},A(\varphi))$, hence it is equivalent to bound the cokernel of
\begin{equation*}
\theta_n: \varprojlim_m H^1_g(G_{\p_m}, T(\varphi)^*)  \xrightarrow {\theta_{n}}  H^1_g(G_{\p_n},T(\varphi)^*).
\end{equation*}
Recall that $H^1_g(G_{\p_n}, T(\varphi)^*)$ is defined as
\begin{equation}\label{eq4}
H^1_g(G_{\p_n}, T(\varphi)^*) = \Ker \Big( H^1(G_{\p_n}, T(\varphi)^*) \xrightarrow {i_n} H^1(G_{\p_n}, V(\varphi)^*) \to H^1(I_{\p_n}, {(V(\varphi)^*)}_\wp^-) \Big ).
\end{equation}
Hence from equations \eqref{eq2''} and \eqref{eq4}, we obtain
\begin{equation}\label{eq7}
H^1_g(G_{\p_n}, T(\varphi)^*) = \Ker \Big( H^1(G_{\p_n}, T(\varphi)^*) \xrightarrow {i_n} H^1(G_{\p_n}, V(\varphi)^*) \xrightarrow {b'_n} H^1(G_{\p_n}, (V(\varphi)^*)_\wp^-) \Big ).
\end{equation}

Now consider the commutative diagram:

\begin{tikzcd}[ arrows={-stealth}]
0=H^0(G_{\p_n},{(T(\varphi)^*)}_\wp^-)  \rar &  H^1(G_{\p_n}, {(T(\varphi)^*)}_\wp^+) \rar["{a_n}"] \dar 
& H^1(G_{\p_n}, T(\varphi)^*)\rar["{b_n}"] \dar["{i_n}"]\arrow[dr, dashrightarrow,xshift=-1ex] & H^1(G_{\p_n}, {(T(\varphi)^*)}_\wp^-) \dar["{i'_n}"] \\
0=H^0(G_{\p_n},{(V(\varphi)^*)}_\wp^-) \rar & H^1(G_{\p_n}, {(V(\varphi)^*)}_\wp^+) \rar["{a'_n}"]
  &  H^1(G_{\p_n}, V(\varphi)^*) \rar["{b'_n}"] & H^1(G_{\p_n}, {(V(\varphi)^*)}_\wp^-).
\end{tikzcd}

From the above commutative diagram and equation \eqref{eq7}, we have   
\begin{equation*}
H^1_g(G_{\p_n}, T(\varphi)^*)= \Ker(b'_n \circ i_n) = \Ker(i'_n \circ b_n).  
\end{equation*}
From snake lemma, we get $0 \to \Ker (b_n) = H^1(G_{\p_n}, {(T(\varphi)^*)}_\wp^+) \to \Ker(i'_n \circ b_n) \to \Ker(i'_n) = H^0(G_{\p_n}, {(A(\varphi)^*)}_\wp^-)=0 $, for the last equality we used the fact that  $H^0(G_{\p_n},{(V(\varphi)^*)}_\wp^-)=0$, and $p$ does not divide the order of the group $\Delta$. Therefore, $ H^1(G_{\p_n}, {(T(\varphi)^*)}_\wp^+) \cong H^1_g(G_{\p_n}, T(\varphi)^*).$
Hence it is sufficient to show that the cokernel of $\phi_{n} : \varprojlim_m H^1(G_{\p_m}, {(T(\varphi)^*)}_\wp^+) \to H^1(G_{\p_n}, {(T(\varphi)^*)}_\wp^+)$ bounded and independent on n. Now 
\begin{align}\label{eq9}
 \text{Coker}(\phi_n)& = \text{Coker} (\varprojlim_m H^1(G_{\p_m}, {(T(\varphi)^*)}_\wp^+) \to H^1(G_{\p_n}, {(T(\varphi)^*)}_\wp^+)) \nonumber &\\
 & \cong \text{Coker} (\varprojlim_m H^1(G_{\p_m}, (T(\varphi)_\wp^-)^*) \to H^1(G_{\p_n}, (T(\varphi)_\wp^-)^*). 
\end{align} 
Here for the last equality, we have used the fact that ${(T(\varphi)^*)}_\wp^+ \cong (T(\varphi)_\wp^-)^*$. Now taking the Pontryagin dual of the equation \eqref{eq9}, we obtain
\begin{align*} 
\text{Coker}(\phi_n)^\vee & = \text{Ker}  (H^1(G_{\p_n}, A(\varphi)_\wp^-) \to \varinjlim_m H^1(G_{\p_m}, A(\varphi)_\wp^-))\nonumber  & \\ 
 & =  \text{Ker} (H^1(G_{\p_n}, A(\varphi)_\wp^-) \to  H^1(G_{\p_\infty}, A(\varphi)_\wp^-)) \nonumber =  H^1(G_{\p_n}/G_{\p_\infty}, (A(\varphi)_\wp^-)^{G_{\p_\infty}}). &
\end{align*}
Since $(A(\varphi)_\wp^-)^{G_{\p_\infty}}=0$,  we obtain $\text{Coker}(\phi_n)^\vee = 0$.
\end{proof}

\begin{corollary}
Let $(\rho,V,V_\wp^+)$ be a triple which satisfies {\bf HYP 1 \& 2}. Assume that  $H^0(\Delta_\wp, U ) = H^0(\Delta_\wp, V)=0$. Let $\varphi: \Gamma \to \OO^*$ be a continuous group homomorphism. Then the map  $Sel^{g}_{\Q_n}(A^*,{(A^*)}_\wp^+,\varphi) \to Sel^{g}_{\Q_\infty}(A^*,{(A^*)}_\wp^+,\varphi)^{\Gamma_n}$ is injective, and the  cokernel of this map is finite for all $n$ with their cardinality bounded independent of $n$.
\end{corollary}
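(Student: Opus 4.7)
The plan is to reduce this corollary directly to Theorem \ref{3.4} by swapping the roles of the Artin representations $(\rho, V, V_\wp^+)$ and $(\sigma, U, U_\wp^+)$, and simultaneously twisting the character by $\kappa$. The key identification is the one already recorded in Section \ref{section1}: since $\kappa$ factors through $\Gamma$, we have $V^*(\varphi) \cong U(\varphi \kappa)$ as $G_\Q$-representations, and this identification respects the $\Delta_\wp$-filtration because $(V^*)_\wp^+ \cong U_\wp^+ \otimes \FF(\kappa)$. Therefore, on the level of Selmer groups one obtains
\begin{equation*}
Sel^{g}_{\Q_n}(A^*,(A^*)_\wp^+,\varphi) \cong Sel^{g}_{\Q_n}(A',(A')_\wp^+,\varphi\kappa) \quad \text{and} \quad Sel^{g}_{\Q_\infty}(A^*,(A^*)_\wp^+,\varphi) \cong Sel^{g}_{\Q_\infty}(A',(A')_\wp^+,\varphi\kappa),
\end{equation*}
and the control map for $A^*$ is nothing but the control map for $A'$ associated to the triple $(\sigma, U, U_\wp^+)$ and the character $\varphi \kappa$.

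The second step is to verify that the hypotheses of Theorem \ref{3.4} are satisfied when applied to $(\sigma, U, U_\wp^+)$ with $\varphi$ replaced by $\varphi \kappa$. First, HYP 1 is immediate since $K$ and $p$ are unchanged. For HYP 2a, the $\Delta_\wp$-invariant subspace $U_\wp^+$ has $\dim U_\wp^+ = d^-(\rho)$, so $0 < d^+(\sigma) < d(\sigma)$. For HYP 2b, recall from Section \ref{section1} that $U_\wp^+ \cong \Hom(V_\wp^-, \FF(\tau))$ and $U_\wp^- \cong \Hom(V_\wp^+, \FF(\tau))$; dualizing preserves absence of common irreducible constituents, so HYP 2b for $U$ follows from HYP 2b for $V$. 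The two vanishing conditions required by Theorem \ref{3.4} for the triple $(\sigma, U, U_\wp^+)$ are $H^0(\Delta_\wp, U) = 0$ and $H^0(\Delta_\wp, V) = 0$ (the latter appears because the Artin representation associated to $U$ via $\Hom_\FF(-,\FF(\tau))$ is canonically $V$, by biduality). Both of these are given as hypotheses.

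With the hypotheses verified, Theorem \ref{3.4} applied to the triple $(\sigma, U, U_\wp^+)$ and the character $\varphi \kappa$ directly yields that the map
\begin{equation*}
Sel^{g}_{\Q_n}(A',(A')_\wp^+,\varphi \kappa) \lra Sel^{g}_{\Q_\infty}(A',(A')_\wp^+,\varphi \kappa)^{\Gamma_n}
\end{equation*}
is injective with finite cokernel whose order is bounded independently of $n$. Transporting this statement along the isomorphisms above gives the desired assertion for $Sel^{g}_{\Q_n}(A^*,(A^*)_\wp^+,\varphi)$. There is no substantial obstacle: the only point requiring attention is checking that the symmetric hypothesis $H^0(\Delta_\wp, V) = 0$ (needed to invoke Theorem \ref{3.4} on the $U$-side) is genuinely available, which is exactly why the corollary assumes both vanishing conditions rather than just $H^0(\Delta_\wp, U) = 0$.
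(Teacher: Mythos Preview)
Your proposal is correct and takes essentially the same approach as the paper: the paper's own proof says only ``The proof is similar to the proof of Theorem \ref{3.4},'' and the natural way to make this precise is exactly the swap $(\rho,V,V_\wp^+) \leftrightarrow (\sigma,U,U_\wp^+)$ with $\varphi$ replaced by $\varphi\kappa$, which is also how the paper proves the analogous Corollary \ref{2.3}. Your verification that biduality gives back $V$ (so that the required vanishing hypotheses are precisely $H^0(\Delta_\wp,U)=0$ and $H^0(\Delta_\wp,V)=0$) is the one nontrivial check, and you have it right.
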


\begin{proof}
The proof is similar to the proof of Theorem \ref{3.4}.

\end{proof}

\section{Characteristic ideals}
In this section, we show that in the case of an Artin representation,  the characteristic ideals of the dual Selmer group do not depend on the choice of the lattice $T$, provided that the Artin representation satisfies a mild hypothesis. For elliptic curves, Mazur \cite{ma} showed that the characteristic ideals are different for a different choice of lattices, and variation of $\mu$-invariant under isogeny was studied in \cite{sc, pr1}. In the case of Artin representation over totally real fields, it was shown in ( \cite[Proposition 2.4]{gr}) that the characteristic ideals are independent on the choice of lattices. We will follow a similar idea given in ( \cite[Proposition 2.4]{gr}) and ( \cite[Proposition 4.1.1]{gr1}).

Let $T$ and $T_1$ be two $\Delta$-invariant $\OO$-lattices in $V$ and $A= V/T$ , $A_1= V/T_1$. 
We will show that (under mild hypothesis) $\Lambda_\OO$-modules $X^{GV}_{\Q_\infty}(T,T{_\wp^+})$  and $X^{GV}_{\Q_\infty}(T_1,(T_1){_\wp^+})$ have the same characteristic ideal. Throughout this discussion, we'll assume that $X^{GV}_{\Q_\infty}(T,T{_\wp^+})$  and $X^{GV}_{\Q_\infty}(T_1,(T_1){_\wp^+})$ are $\Lambda_\OO$-torsion, as otherwise, the characteristic ideal is $0$.

To prove our result, we need an alternative description of $Sel^{GV}_{\Q_\infty}(A,A_\wp^+)$. 

\begin{equation}\label{eq1234}
 Sel^{GV}_{\Q_\infty}(A,A_\wp^+)= \varinjlim_{n} Sel^{GV}_{\Q_n}(A,A_\wp^+)  = \Ker\Big(H^1(\Q_\Sigma/\Q_{\infty}, A) \lra  \underset{ \ell \in \Sigma}{\prod} \, \mathcal{H}^1_{\ell}(\Q_{\infty}, A)\Big),
\end{equation}
 
where $\mathcal{H}^1_{\ell}(\Q_{\infty}, A)$ is defined as follows. For $\ell \neq p$,
\begin{equation*}
 \mathcal{H}^1_{\ell}(\Q_{\infty}, A) = \underset{ v_{\infty} \mid \ell}{\prod} H^1(G_{v_\infty}, A),
\end{equation*}
here the product is over the finite set of primes $v_{\infty} \mid \ell $ of $\Q_{\infty}$. For $\ell=p$,

\begin{equation}\label{eq12345}
 \mathcal{H}^1_{p}(\Q_{\infty}, A) = \text{Im} \Big (H^1(G_{\p_\infty},A) \to H^1(I_{\p_\infty}, A_{\wp}^-) \Big )= H^1(G_{\p_\infty},A)/ L(G_{\p_\infty},A), 
\end{equation} 
here $L(G_{\p_\infty},A)$ is the kernel of the map $H^1(G_{\p_\infty},A) \to H^1(I_{\p_\infty}, A_{\wp}^-)$. The $\Lambda_{\OO}$- submodule $ L(G_{\p_\infty},A)$ of $H^1(G_{\p_\infty},A)$ sits in the short exact sequence 
\begin{equation}\label{eq1.3}
0 \to H^1(G_{\p_\infty}, A_{\wp}^+) \to L(G_{\p_\infty},A) \to H^1_{\text{unr}}(G_{\p_\infty}, A_{\wp}^-) \to 0,
\end{equation}

where $H^1_{\text{unr}}(G_{\p_\infty}, A_{\wp}^-)$ denotes the kernel of the map $H^1(G_{\p_\infty}, A_{\wp}^-) \to H^1(I_{\p_\infty}, A_{\wp}^-).$ 

\begin{theorem}\label{muconstant}
Let $(\rho,V,V_\wp^+)$ be a triple which satisfies {\bf HYP 1, 2 \& Tor{$_\rho$}}. Assume that $H^0(\Delta_\wp,V)=0$. Then the $\Lambda_\OO$-modules $X^{GV}_{\Q_\infty}(T,T{_\wp^+})$  and $X^{GV}_{\Q_\infty}(T_1,(T_1){_\wp^+})$ have same characteristic ideal.
\end{theorem}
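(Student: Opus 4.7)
The plan is to follow the approach of Greenberg \cite{gr, gr1} for the totally real case, reducing the comparison of characteristic ideals to showing that the natural map between the two Selmer groups is a pseudo-isomorphism of $\Lambda_\OO$-modules.

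\textbf{Step 1 (Reduction to nested lattices).} Replacing $T$ by $\pi^a T$ only induces a canonical isomorphism of $A = V/T$ and leaves $X^{GV}_{\Q_\infty}(T, T_\wp^+)$ unchanged, so I may assume $T_1 \subseteq T$. Then $N := T/T_1$ is a finite $\OO[\Delta]$-module, killed by some $\pi^a$, and the $\Delta_\wp$-filtration on $V$ descends to give compatible short exact sequences
\begin{equation*}
0 \to N \to A_1 \to A \to 0, \quad 0 \to N_\wp^+ \to (A_1)_\wp^+ \to A_\wp^+ \to 0, \quad 0 \to N_\wp^- \to (A_1)_\wp^- \to A_\wp^- \to 0,
\end{equation*}
where $N_\wp^\pm$ are finite $\OO[\Delta_\wp]$-modules.

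\textbf{Step 2 (Comparison diagram).} Using the presentation (\ref{eq1234}) of the Selmer group and the surjection $A_1 \twoheadrightarrow A$, I assemble the commutative diagram
\begin{equation*}
\begin{tikzcd}
0 \rar & Sel^{GV}_{\Q_\infty}(A_1, (A_1)_\wp^+) \rar \dar & H^1(\Q_\Sigma/\Q_\infty, A_1) \rar \dar & \prod_{\ell \in \Sigma} \mathcal{H}^1_\ell(\Q_\infty, A_1) \dar \\
0 \rar & Sel^{GV}_{\Q_\infty}(A, A_\wp^+) \rar & H^1(\Q_\Sigma/\Q_\infty, A) \rar & \prod_{\ell \in \Sigma} \mathcal{H}^1_\ell(\Q_\infty, A)
\end{tikzcd}
\end{equation*}
and apply the snake lemma to extract a kernel $\mathcal{K}$ and cokernel $\mathcal{C}$ of $Sel^{GV}_{\Q_\infty}(A_1,(A_1)_\wp^+) \to Sel^{GV}_{\Q_\infty}(A,A_\wp^+)$ which are bounded by $H^i(\Q_\Sigma/\Q_\infty, N)$ ($i = 1, 2$) together with local terms in $H^*$ of $N$ and $N_\wp^-$ at primes of $\Sigma$.

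\textbf{Step 3 (Pseudo-nullity of the error).} I show $\mathcal{K}$ and $\mathcal{C}$ are pseudo-null $\Lambda_\OO$-modules, i.e.\ finite. At primes $\omega \nmid p$, the extension $\Q_\infty/\Q$ is unramified, so $I_{v_\infty}$ acts on $N$ through the finite group $\Delta$, and the local contribution is automatically finite (subquotient of the finite module $N$). At $\omega = \p_\infty$, the hypothesis $H^0(\Delta_\wp, V) = 0$, combined with Lemma \ref{lemma2.1}, yields $H^0(G_{\p_\infty}, A_\wp^-) = 0$ and hence $H^0(G_{\p_\infty}, N_\wp^-)$ is finite; this is exactly the vanishing that allowed the $p$-local error to be controlled uniformly in the control theorems (Theorems \ref{3.1}, \ref{3.4}), and here it upgrades the bound to pseudo-nullity at the $\Q_\infty$-level. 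Globally, the image of $H^1(\Q_\Sigma/\Q_\infty, N)$ inside $Sel^{GV}_{\Q_\infty}(A_1, (A_1)_\wp^+)$ consists of classes that are unramified away from $p$ and satisfy the Greenberg condition at $p$; since $N$ is finite, standard Iwasawa-theoretic finiteness for Selmer-type subgroups of $H^1(\Q_\Sigma/\Q_\infty, -)$ of finite coefficients (in the spirit of weak Leopoldt) shows this intersection is finite, and a dual argument handles $H^2(\Q_\Sigma/\Q_\infty, N)$.

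\textbf{Step 4 (Conclusion).} Pontryagin duality produces an exact sequence of finitely generated $\Lambda_\OO$-modules
\begin{equation*}
0 \to \mathcal{C}^\vee \to X^{GV}_{\Q_\infty}(T, T_\wp^+) \to X^{GV}_{\Q_\infty}(T_1, (T_1)_\wp^+) \to \mathcal{K}^\vee \to 0,
\end{equation*}
with $\mathcal{K}^\vee$ and $\mathcal{C}^\vee$ finite. Under HYP Tor$_\rho$ both middle modules are $\Lambda_\OO$-torsion, and a map of torsion $\Lambda_\OO$-modules with pseudo-null kernel and cokernel is a pseudo-isomorphism, hence the two characteristic ideals coincide. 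The main obstacle is Step 3: the control theorems of Section 4 bound the error only at each finite level $\Q_n$ uniformly, and converting these uniform finite-level bounds into genuine pseudo-nullity of $\mathcal{K}, \mathcal{C}$ as $\Lambda_\OO$-modules is the decisive point where the hypothesis $H^0(\Delta_\wp, V) = 0$ is used.
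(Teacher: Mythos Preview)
Your Step 3 contains a genuine gap that breaks the argument. You claim that $\mathcal{K}$ and $\mathcal{C}$ are pseudo-null (equivalently finite, since $\Lambda_\OO$ is a $2$-dimensional regular local ring), but this is not justified and is in general false. For a finite Galois module $N$, the group $H^1(\Q_\Sigma/\Q_\infty,N)$ is typically \emph{not} finite as a $\Lambda_\OO$-module: it is cofinitely generated over $\widetilde{\Lambda}_\OO=(\OO/\pi)[[\Gamma]]$, and the paper in fact computes its $\widetilde{\Lambda}_\OO$-corank $h_1$ and shows $h_1-h_2=d^-(\Phi)>0$. The Selmer-type subgroup you describe is, up to finite error, a residual Selmer group for the finite module $N$ over $\Q_\infty$; asserting this is finite is tantamount to a $\mu=0$ statement, which is not known and certainly not ``standard''. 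The appeal to weak Leopoldt is misplaced: weak Leopoldt controls $H^2$, not the size of Selmer groups with finite coefficients.

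The paper proceeds differently and avoids this obstacle entirely. After reducing to $T\subset T_1$, it uses the companion map $\psi:A_1\to A$ with $\psi\circ\phi=\pi^t$ to conclude that the two characteristic ideals differ only by a power of $\pi$, so the whole problem reduces to showing the $\mu$-invariants agree. For that, the paper uses surjectivity of the global-to-local map (from \cite{gv1}) to write $\mu(Sel^{GV})=\mu(H^1(\Q_\Sigma/\Q_\infty,A))-\sum_\ell\mu(\mathcal{H}^1_\ell)$ for both lattices, and then compares each term via global and local Euler--Poincar\'e formulas for the finite kernel $\Phi$: the global difference is $d^-(\Phi)$, the local difference at $p$ is $d(\Phi)-d^+(\Phi)=d^-(\Phi)$ (here $H^0(\Delta_\wp,V)=0$ kills the $H^1_{\mathrm{unr}}$ contribution), and the differences away from $p$ vanish. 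The key point you are missing is this two-step reduction: first isolate the $\pi$-power ambiguity via the back-and-forth isogeny, then compute $\mu$ directly rather than attempting to prove a pseudo-isomorphism of Selmer groups.
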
 

\begin{proof}
By scaling a power of $\pi$, we can assume that $T \subset T_1$. Consequently, we have a $G_{\Q}$-equivariant surjective map $\phi: A \to A_1$ with finite kernel $\Phi$. Then $\Phi$ must be contained in $A[\pi^t]$ for some $t \geq 0$, where $A[\pi^t]$ denotes the $\pi^t$-torsion points. 
We also have a $G_{\Q}$-equivariant  map $\psi: A_1 \to A$ such that the composite map $\psi \circ \phi : A \to A$ is  multiplication by $\pi^t$. 
The map $\phi$ induces a map from $ Sel^{GV}_{\Q_\infty}(A,A_\wp^+)$ to  $ Sel^{GV}_{\Q_\infty}(A_1,{(A_1)}_\wp^+)$ such that the kernel is annihilated by $\pi^t$ and similarly, $\psi$ induces a map from $ Sel^{GV}_{\Q_\infty}(A_1,{(A_1)}_\wp^+)$ to $ Sel^{GV}_{\Q_\infty}(A,A_\wp^+)$. The composition map is given by multiplication by $\pi^t$. 
Therefore the characteristic ideals $C$ and $C_1$ of $X^{GV}_{\Q_\infty}(T,T{_\wp^+})$  and $X^{GV}_{\Q_\infty}(T_1,(T_1){_\wp^+})$, respectively, satisfy the relation $C_1=\pi^sC$, for some $s \in \Z$. Thus, to prove the theorem, it is enough to show that the $\mu$-invariant of $ Sel^{GV}_{\Q_\infty}(A,A_\wp^+)$ and $ Sel^{GV}_{\Q_\infty}(A_1,{(A_1)}_\wp^+)$ are the same. As the map given in equation \eqref{eq1234} is surjective (see \cite[Proposition 2.1]{gv1}), we have an exact sequence

\begin{equation*}
 0 \to  Sel^{GV}_{\Q_\infty}(A,A_\wp^+) \to H^1(\Q_\Sigma/\Q_{\infty}, A) \lra  \underset{ \ell \in \Sigma}{\prod} \, \mathcal{H}^1_{\ell}(\Q_{\infty}, A) \to 0.
\end{equation*}

This gives us $\mu \big(Sel^{GV}_{\Q_\infty}(A,A_\wp^+)\big )+ \underset{ \ell \in \Sigma}{\sum} \, \mu \big (\mathcal{H}^1_{\ell}(\Q_{\infty}, A) \big ) = \mu \big (H^1(\Q_\Sigma/\Q_{\infty}, A) \big ) $. 
Similarly we  get $\mu \big(Sel^{GV}_{\Q_\infty}(A_1,{(A_1)}_\wp^+)\big )+ \underset{ \ell \in \Sigma}{\sum} \, \mu \big (\mathcal{H}^1_{\ell}(\Q_{\infty}, A_1) \big ) = \mu \big (H^1(\Q_\Sigma/\Q_{\infty}, A_1) \big ) $. Therefore it is enough to show that 
\begin{equation}\label{eq123456}
\mu \big (H^1(\Q_\Sigma/\Q_{\infty}, A) \big )-\mu \big (H^1(\Q_\Sigma/\Q_{\infty}, A_1) \big ) =\underset{ \ell \in \Sigma}{\sum} \Big ( \, \mu \big (\mathcal{H}^1_{\ell}(\Q_{\infty}, A) \big )- \, \mu \big (\mathcal{H}^1_{\ell}(\Q_{\infty}, A_1) \big )\Big ).
\end{equation}

Since $\mathcal{H}^1_{\ell}(\Q_{\infty}, A)=0$  when $\ell$ is an archimedean prime and $\mathcal{H}^1_{\ell}(\Q_{\infty}, A)$ has a finite $\OO$-rank when $\ell \neq p$, the $\mu$-invariant is zero in both the cases. Thus we need to show that
\begin{equation}\label{eq1234567}
\mu \big (H^1(\Q_\Sigma/\Q_{\infty}, A) \big )-\mu \big (H^1(\Q_\Sigma/\Q_{\infty}, A_1) \big ) = \mu \big (\mathcal{H}^1_{p}(\Q_{\infty}, A) \big )-  \mu \big (\mathcal{H}^1_{p}(\Q_{\infty}, A_1) \big ).
\end{equation}
 
First, we will compute the value of the L.H.S of equation \eqref{eq1234567}.\\
From $\phi$, we get an exact sequence $0 \to \Phi \to A \to A_1 \to 0$ which induces  the exact sequence
\begin{equation*}
\begin{split}
H^0(\Q_\Sigma/\Q_{\infty}, A_1) \to H^1(\Q_\Sigma/\Q_{\infty}, \Phi)\to H^1(\Q_\Sigma/\Q_{\infty}, A)  \to H^1(\Q_\Sigma/\Q_{\infty}, A_1) &  \\
\to H^2(\Q_\Sigma/\Q_{\infty}, \Phi) \to H^2(\Q_\Sigma/\Q_{\infty}, A) . & 
\end{split}
\end{equation*}

The $\mu$- invariant of $H^0(\Q_\Sigma/\Q_{\infty}, A_1)$ and $H^2(\Q_\Sigma/\Q_{\infty}, A)$ are zero (see \cite[Proposition 2.8]{gv}). Therefore, we have   
\begin{equation*}
\mu \big (H^1(\Q_\Sigma/\Q_{\infty}, A) \big )-\mu \big (H^1(\Q_\Sigma/\Q_{\infty}, A_1) \big )= \mu \big (H^1(\Q_\Sigma/\Q_{\infty}, \Phi) \big )-\mu \big (H^2(\Q_\Sigma/\Q_{\infty}, \Phi) \big ).
\end{equation*}

Note that $\Phi$ is annihilated by  $\pi^t$, but we can reduce to the case where $\Phi$ is annihilated by $\pi$.
Let $\widetilde{\Lambda}_{\OO}= \Lambda_{\OO}/\pi\Lambda_{\OO} = k[[\Gamma]]$, where $k = \OO/(\pi)$. 
Then the $\mu$-invariant of $H^i(\Q_\Sigma/\Q_{\infty}, \Phi)$ is same as the $\widetilde{\Lambda}_{\OO}$-corank of $H^i(\Q_\Sigma/\Q_{\infty}, \Phi)$(See \cite[Section-2, page 5]{rs}). 
Therefore the value of $\mu \big (H^1(\Q_\Sigma/\Q_{\infty}, A) \big )-\mu \big (H^1(\Q_\Sigma/\Q_{\infty}, A_1) \big )$ is same as $\text{corank}_{\widetilde{\Lambda}_{\OO}}H^1(\Q_\Sigma/\Q_{\infty}, \Phi)- \text{corank}_{\widetilde{\Lambda}_{\OO}}H^2(\Q_\Sigma/\Q_{\infty}, \Phi)$. 
First, we will show that $H^1(\Q_\Sigma/\Q_{\infty}, \Phi)$ and $H^2(\Q_\Sigma/\Q_{\infty}, \Phi)$ are  cofinitely
generated as $\widetilde{\Lambda}_{\OO}$-modules, and then we will compute the value of
\begin{equation*}
\text{corank}_{\widetilde{\Lambda}_{\OO}}H^1(\Q_\Sigma/\Q_{\infty}, \Phi)- \text{corank}_{\widetilde{\Lambda}_{\OO}}H^2(\Q_\Sigma/\Q_{\infty}, \Phi).
\end{equation*}

For $i=1,2$ consider the restriction maps
\begin{equation*}
\rho_n^{(i)}: H^i(\Q_\Sigma/\Q_{n}, \Phi) \to H^i(\Q_\Sigma/\Q_{\infty}, \Phi)^{\Gamma_n}.
\end{equation*}
Since the cohomological dimension of $\Gal(\Q_\infty/\Q_n)$ is one, using the inflation-restriction map, we get that $\rho_n^{(1)}$ is surjective and ker$(\rho_n^{(1)}) = H^1(\Gamma_n, M)$, where $M= H^0(\Q_\Sigma/\Q_{\infty}, \Phi)$. Since $M=0$, the map $\rho_n^{(1)}$ is injective.
From Hochschild-Serre spectral sequence, we get that $\rho_n^{(2)}$ is surjective and ker$(\rho_n^{(2)}) = H^1(\Gamma_n, N)$, where $N= H^1(\Q_\Sigma/\Q_{\infty}, \Phi)$. 
The group $H^i(\Q_\Sigma/\Q, \Phi)$ is finite for all $i$. Using the surjectivity of $\rho_0^{(1)}$ and $\rho_0^{(2)}$, we get that $H^i(\Q_\Sigma/\Q_{\infty}, \Phi)^{\Gamma}$ is also finite for $i=1,2$. 
As a result, $H^1(\Q_\Sigma/\Q_{\infty}, \Phi)$ and $H^2(\Q_\Sigma/\Q_{\infty}, \Phi)$ are  cofinitely
generated as $\widetilde{\Lambda}_{\OO}$-modules.
Let $h_1$, $h_2$ denote the $\widetilde{\Lambda}_{\OO}$-corank of $H^1(\Q_\Sigma/\Q_{\infty}, \Phi)$ and $H^2(\Q_\Sigma/\Q_{\infty}, \Phi)$, respectively. 
Since $\widetilde{\Lambda}_{\OO}$ is a PID, the pontryagin dual $N^\vee$ of $N$ is the direct sum of a free $\widetilde{\Lambda}_{\OO}$-module of rank $h_1$ and a finite torsion module. 
Therefore, $N \cong N_{\text{div}} \oplus N/N_{\text{div}}$ as an $\widetilde{\Lambda}_{\OO}$-module, where $N/N_{\text{div}}$ is finite.
Since $N_{\text{div}}$ is divisible, and $H^0(\Gamma_n, N_{\text{div}})$ is finite, we have $H^1(\Gamma_n, N_{\text{div}})=0$. 
As a result, $H^1(\Gamma_n, N)=H^1(\Gamma_n, N/N_{\text{div}})$. Therefore, ker$(\rho_n^{(2)})$ is bounded by $N/N_{\text{div}}$. Since $N/N_{\text{div}}$ is finite, for sufficiently large $n$, it is fixed by  $\Gamma_n$ and consequently we get $|$ker$(\rho_n^{(2)})|=|N/N_{\text{div}}|$.

If $X$ is an $\widetilde{\Lambda}_{\OO}=k[[\Gamma]]$-module of corank $h$, then we have
\begin{equation}\label{eq12345678}
\text{dim}_k (X^{\Gamma_n})= hp^n + O(1)
 \end{equation}
as $n \to \infty$. The $k$-dimension of $X^{\Gamma_n}$ is equal to $hp^n$ for at least one $n$ if and only if $X$ is a cofree $\widetilde{\Lambda}_{\OO}$-module. By applying equation \eqref{eq12345678} to  $N$ for sufficiently large $n$, we have
\begin{equation} \label{eq1.7}
\text{dim}_k (H^1(\Q_\Sigma/\Q_{n}, \Phi))= h_1p^n +\text{dim}_k (N/N_{\text{div}}).
\end{equation}
Now from the Euler-Poincare characteristic formula for number fields we obtain 
\begin{equation} \label{eq1.8}
\text{dim}_k (H^0(\Q_\Sigma/\Q_{n}, \Phi)) + \text{dim}_k (H^2(\Q_\Sigma/\Q_{n}, \Phi))-\text{dim}_k (H^1(\Q_\Sigma/\Q_{n}, \Phi)) = -d^-(\Phi)[\Q_n:\Q].
\end{equation}
Since $H^0(\Q_\Sigma/\Q_{n}, \Phi)=0$, substituting equation \eqref{eq1.7} in \eqref{eq1.8}, for sufficiently large $n$, we get 
\begin{equation*} 
\text{dim}_k (H^2(\Q_\Sigma/\Q_{n}, \Phi))-( h_1p^n +\text{dim}_k (N/N_{\text{div}}) = -d^-(\Phi)[\Q_n:\Q]=  -d^-(\Phi)p^n,
\end{equation*}
which is same as,
\begin{equation}\label{xoxoxo}
\text{dim}_k (H^2(\Q_\Sigma/\Q_{n}, \Phi))- \text{dim}_k (N/N_{\text{div}}) = (h_1- d^-(\Phi))p^n.
\end{equation}

For sufficiently large $n$, $|$ker$\rho_n^{(2)}|=|N/N_{\text{div}}|$. Thus from equation \eqref{xoxoxo} we obtain $\text{dim}_k (H^2(\Q_\Sigma/\Q_{\infty}, \Phi)^{\Gamma_n}) = \text{dim}_k (H^2(\Q_\Sigma/\Q_{n}, \Phi))- \text{dim}_k (N/N_{\text{div}})= (h_1- d^-(\Phi))p^n$. Again using equation \eqref{eq12345678}, we obtain that the $\widetilde{\Lambda}_{\OO}$-corank of  $H^2(\Q_\Sigma/\Q_{\infty}, \Phi)$ is $h_2= h_1- d^-(\Phi).$ Therefore 
\begin{equation*}
\text{corank}_{\widetilde{\Lambda}_{\OO}}H^1(\Q_\Sigma/\Q_{\infty}, \Phi)- \text{corank}_{\widetilde{\Lambda}_{\OO}}H^2(\Q_\Sigma/\Q_{\infty}, \Phi)= d^-(\Phi).
\end{equation*}\\

Using equation \eqref{eq12345}, we identify $\mathcal{H}^1_{p}(\Q_{\infty}, A)$ with $H^1(G_{\p_\infty},A)/L(G_{\p_\infty}, A)$. Thus by  equation \eqref{eq1.3}, $\mu (L(G_{\p_\infty},A))= \mu (H^1(G_{\p_\infty},A_\wp^+)) + \mu(H^1_{\text{unr}}(G_{\p_\infty}, A_{\wp}^-))$. Therefore equation \eqref{eq1234567} becomes 
\begin{equation}\label{yoyoyo}
\begin{split}
d^-(\Phi) = (\mu (H^1(G_{\p_\infty},A))- \mu(H^1(G_{\p_\infty},A_1)))-(\mu(H^1(G_{\p_\infty}, A_{\wp}^+))-\mu(H^1(G_{\p_\infty}, {(A_1)}_{\wp}^+))) & \\
 -(\mu(H^1_{\text{unr}}(G_{\p_\infty}, A_{\wp}^-))-\mu(H^1_{\text{unr}}(G_{\p_\infty}, {(A_1)}_{\wp}^-))) .& 
\end{split}
\end{equation}

First, we will compute the value of $\mu(H^1(G_{\p_\infty},A))-\mu( H^1(G_{\p_\infty},A_1))$. Following the same method as before, one can show that $H^1(G_{\p_\infty}, \Phi)$ and $H^2(G_{\p_\infty}, \Phi)$ are cofinitely generated as $\widetilde{\Lambda}_{\OO}$-modules and the value of $\mu(H^1(G_{\p_\infty},A))-\mu( H^1(G_{\p_\infty},A_1))$  is same as $\text{corank}_{\widetilde{\Lambda}_{\OO}}H^1(G_{\p_\infty}, \Phi)- \text{corank}_{\widetilde{\Lambda}_{\OO}}H^2(G_{\p_\infty}, \Phi)$.

By inflation-restriction exact sequence, we obtain
\begin{equation*}
0\to H^1(\Gamma_{n,\p}, \Phi^{G_{\p_\infty}}) \to H^1(G_{\p_n}, \Phi) \to H^1(G_{\p_\infty}, \Phi)^{\Gamma_{n,\p}} \to H^2(\Gamma_{n,\p}, \Phi^{G_{\p_\infty}}),
\end{equation*}
where $\Gamma_{n,\p}= G_{\p_n}/G_{\p_\infty}$. As $\Phi^{G_{\p_\infty}}=0$, we obtain $\text{dim}_k (H^1(G_{\p_\infty}, \Phi)^{\Gamma_{n,\p}})= \text{dim}_k (H^1(G_{\p_n}, \Phi))$.
The $k$-dimension of $H^2(G_{\p_n}, \Phi)$ stabilize as $n$ varies. Local Euler-Poincare formula yields
\begin{equation*}
\text{dim}_k (H^1(G_{\p_\infty}, \Phi)^{\Gamma_{n,\p}})= \text{dim}_k(H^1(G_{\p_n}, \Phi)) = d(\Phi)[\Q_{n,\p_n}:\Q_p] + O(1).
\end{equation*}
Using equation \eqref{eq12345678}, we get that  $\text{corank}_{\widetilde{\Lambda}_{\OO}} (H^1(G_{\p_\infty}, \Phi))= d(\Phi) $. \\
From Hochschild-Serre spectral sequence, we get obtain the restriction map $H^2(G_{\p_n}, \Phi) \to H^2(G_{\p_\infty}, \Phi)^{\Gamma_{n,\p}}$ is surjective. As the order of the group $H^2(G_{\p_n}, \Phi)$ stabilizes as $n$ varies by using equation \eqref{eq12345678}, we obtain $\text{corank}_{\widetilde{\Lambda}_{\OO}} (H^2(G_{\p_\infty}, \Phi))= 0 $. 
Hence, $\mu(H^1(G_{\p_\infty},A))-\mu( H^1(G_{\p_\infty},A_1) )=d(\Phi)$. 

Similarly one can show that $\mu(H^1(G_{\p_\infty},A_\wp^+))-\mu( H^1(G_{\p_\infty},(A_1)_\wp^+))=d^+(\Phi)$. As a consequence, equation \eqref{yoyoyo} becomes
 \begin{equation}\label{zozozo}
 \mu(H^1_{\text{unr}}(G_{\p_\infty}, A_{\wp}^-))- \mu(H^1_{\text{unr}}(G_{\p_\infty}, {(A_1)}_{\wp}^-))=0.
\end{equation}
Recall that $H^1_{\text{unr}}(G_{\p_\infty}, A_{\wp}^-) = \Ker(H^1(G_{\p_\infty}, A_{\wp}^-)  \to H^1(I_{\p_\infty}, A_{\wp}^-))$. Since $G_{\p_\infty}/I_{\p_\infty}$ is topologically cyclic, we obtain 
\begin{equation*}
0 \to  H^1(G_{\p_\infty}/I_{\p_\infty}, (A_{\wp}^-)^{I_{\p_\infty}}) \to H^1(G_{\p_\infty}, A_{\wp}^-)  \to H^1(I_{\p_\infty}, A_{\wp}^-) \to 0
\end{equation*}
and moreover $\text{corank}(H^1(G_{\p_\infty}/I_{\p_\infty}, (A_{\wp}^-)^{I_{\p_\infty}})) = \text{corank}(H^0(G_{\p_\infty}/I_{\p_\infty}, (A_{\wp}^-)^{I_{\p_\infty}})$. 
By our assumption $H^0(\Delta_\wp, V)=0$ and  Lemma \ref{lemma2.1}, we have $H^0(G_{\p_\infty}, V_\wp^-)=0$. Since the order of  $\Delta$ is co-prime to  $p$, therefore $H^0(G_{\p_\infty}, V_{\wp}^-)=0$ implies that $H^0(G_{\p_\infty}/I_{\p_\infty}, (A_{\wp}^-)^{I_{\p_\infty}})= (A_{\wp}^-)^{G_{\p_\infty}}=0$. It follows that $\text{corank}(H^1(G_{\p_\infty}/I_{\p_\infty}, (A_{\wp}^-)^{I_{\p_\infty}}))=0$ and therefore $\mu(H^1_{\text{unr}}(G_{\p_\infty}, A_{\wp}^-))=0$. 
A similar argument shows that $\mu(H^1_{\text{unr}}(G_{\p_\infty}, {(A_1)}_{\wp}^-))=0$.

\end{proof}

\section{Algebraic Functional Equation}

In this section, we construct an algebraic functional equation relating characteristic ideals of the dual Selmer groups (provided it is torsion) associated to Artin representations $V$ and $U$. 

An essential ingredient for our proof is a version of Cassels-Tate pairing of Flach \cite{fl}.

\begin{proposition}[{\bf Generalised Cassels-Tate pairing of Flach}]\label{4.1}
Let $(\rho,V,V_\wp^+)$ be a triple which satisfies {\bf HYP 1 \& 2}. Assume that $H^0(\Delta_\wp,V) = H^0(\Delta_\wp, U)=0$. Let $\varphi: \Gamma \to \OO^*$ be a continuous group homomorphism.  Then for all $n$, we have a  perfect pairing 
\begin{equation*}
 <,>:\frac{Sel^{g}_{\Q_n}(A,{A}_\wp^+,\varphi^{-1})}{Sel^{g}_{\Q_n}(A,{A}_\wp^+,\varphi^{-1})_{\mathrm{div}}} \times \frac{Sel^{g}_{\Q_n}(A^*,{(A^*)}_\wp^+,\varphi)}{Sel^{g}_{\Q_n}(A^*,{(A^*)}_\wp^+,\varphi)_{\mathrm{div}}} \longrightarrow \Q_p /\Z_p .
\end{equation*}

In particular, if the Selmer groups $Sel^{g}_{\Q_n}(A,{A}_\wp^+,\varphi^{-1})$ and $Sel^{g}_{\Q_n}(A^*,{(A^*)}_\wp^+,\varphi)$ are finite, then $Sel^{g}_{\Q_n}(A^*,{(A^*)}_\wp^+,\varphi)$  is the Pontryagin dual of $Sel^{g}_{\Q_n}(A,{A}_\wp^+,\varphi^{-1})$ under the above paring.

\end{proposition}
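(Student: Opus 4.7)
The plan is to deduce this statement as a direct application of Flach's generalized Cassels--Tate pairing \cite{fl}, once we verify that the $H^1_g$ local conditions defining our Selmer groups satisfy Flach's mutual-annihilator hypothesis. The main preparatory observation is that $A^*(\varphi)$ is the Cartier dual of $A(\varphi^{-1})$, since
\begin{equation*}
A^*(\varphi) = \Hom(A,\mu_{p^\infty}) \otimes \OO(\varphi) \cong \Hom\bigl(A \otimes \OO(\varphi^{-1}), \mu_{p^\infty}\bigr) = \Hom(A(\varphi^{-1}), \mu_{p^\infty}).
\end{equation*}
Consequently, at every finite prime $v_n$ of $\Q_n$ we have a perfect local Tate pairing between $H^1(G_{v_n}, A(\varphi^{-1}))$ and $H^1(G_{v_n}, T^*(\varphi))$.

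First, I would invoke Lemma \ref{3.3} (applied with $\varphi$ replaced by $\varphi^{-1}$, which is also a continuous homomorphism $\Gamma \to \OO^*$): under this local Tate pairing, the orthogonal complement of $H^1_g(G_{v_n}, A(\varphi^{-1}))$ is exactly $H^1_g(G_{v_n}, T^*(\varphi))$, for every $v_n \mid \Sigma$ (both for $v_n \nmid p$ and for $v_n = \p_n$). This is precisely the hypothesis needed to apply Flach's construction to the dual pair of Selmer structures.

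Next, I would construct the pairing following Flach. Given classes $c \in Sel^g_{\Q_n}(A, A_\wp^+, \varphi^{-1})$ and $c^* \in Sel^g_{\Q_n}(A^*, (A^*)_\wp^+, \varphi)$, choose $N$ large enough so that $c, c^*$ are killed by $\pi^N$ and hence lift, in the corresponding Selmer groups for $\pi^N$-torsion submodules, to cocycles $\tilde c, \tilde c^*$. The cup product $\tilde c \cup \tilde c^*$ lies in $H^2(\Q_\Sigma/\Q_n, \mu_{p^N})$; by the mutual-annihilator property verified above, its local restrictions vanish at each $v_n$, so by the Poitou--Tate nine-term exact sequence one obtains a well-defined class in $\Q_p/\Z_p$. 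Independence of the lifts is standard (the cup product of a coboundary with a cocycle is a coboundary, and Selmer structures are preserved under the dual pairing). This gives a pairing on $Sel^g_{\Q_n}(A,A_\wp^+,\varphi^{-1}) \times Sel^g_{\Q_n}(A^*,(A^*)_\wp^+,\varphi)$ with values in $\Q_p/\Z_p$.

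Finally, I would verify non-degeneracy on the quotients by divisible submodules. Since the target $\Q_p/\Z_p$ is an injective $\Z_p$-module of cofinite rank one, divisible submodules on either side must pair trivially, so the pairing factors through the quotients by the maximal divisibles. Perfectness on these finite quotients then follows from Flach's theorem, which in our setup identifies the left kernel with the maximal divisible part. The main obstacle is bookkeeping at the level of cochains when passing to the limit $N \to \infty$ and ensuring compatibility with the global and local Selmer conditions at the prime above $p$; this is where Lemma \ref{3.3} plays its role essentially, since without the matching of $H^1_g$ conditions at $\p_n$ the Flach construction would not apply to the Greenberg-type local condition that appears in $Sel^{GV}$ and motivated introducing $Sel^g$ in the first place.
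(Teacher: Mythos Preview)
Your proposal is correct and follows essentially the same approach as the paper: verify via Lemma \ref{3.3} that the $H^1_g$ local conditions are mutual annihilators under local Tate duality, then invoke Flach's theorem \cite[Theorem 1]{fl} directly. The paper's proof is even terser than yours --- it cites Lemma \ref{3.3} at the $V$-level (rather than the $A$/$T^*$-level consequence you use, though both are recorded there) and then simply appeals to Flach as a black box, whereas you sketch the cup-product construction explicitly; but the logical content is identical.
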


\begin{proof}
From Lemma \ref{3.3}, we obtained that $H^1_g(G_{v_n},  V(\varphi^{-1})^*)$ is the orthogonal complement of  $H^1_g(G_{v_n}, V(\varphi^{-1}))$ under the local Tate pairing 
$$H^1(G_{v_n}, V(\varphi^{-1})) \times H^1(G_{v_n}, V(\varphi^{-1})^*) \longrightarrow \Q_p/\Z_p,$$
for each place $v_n$ of $\Q_n$ such that $v_n \mid \Sigma$. Then from the generalized Cassels-Tate pairing of Flach \cite[Theorem 1]{fl}, we obtain a perfect pairing 
\begin{equation}\label{CT}
 <,>:\frac{Sel^{g}_{\Q_n}(A,{A}_\wp^+,\varphi^{-1})}{Sel^{g}_{\Q_n}(A,{A}_\wp^+,\varphi^{-1})_{\mathrm{div}}} \times \frac{Sel^{g}_{\Q_n}(A^*,{(A^*)}_\wp^+,\varphi)}{Sel^{g}_{\Q_n}(A^*,{(A^*)}_\wp^+,\varphi)_{\mathrm{div}}} \longrightarrow \Q_p /\Z_p .
 \end{equation}

If $Sel^{g}_{\Q_n}(A,{A}_\wp^+,\varphi^{-1})$ and $Sel^{g}_{\Q_n}(A^*,{(A^*)}_\wp^+,\varphi)$ are finite, then $Sel^{g}_{\Q_n}(A,{A}_\wp^+,\varphi^{-1})_{\mathrm{div}}$ and $Sel^{g}_{\Q_n}(A^*,{(A^*)}_\wp^+,\varphi)_{\mathrm{div}}$ are zero. Hence the pairing in equation  \eqref{CT} gives a perfect pairing between $Sel^{g}_{\Q_n}(A,{A}_\wp^+,\varphi^{-1})$ and $Sel^{g}_{\Q_n}(A^*,{(A^*)}_\wp^+,\varphi)$. 

\end{proof}
Before Proceeding to the proof of the functional equation, we introduce a few notations following \cite{pr}.\\

Let $\OO$ be the ring of integers for a finite extension of $\Q_p$ and $M$ is a finitely generated $\Lambda_\OO$-module. Then Perrin-Riou \cite[Page 728]{pr} defines, $$a^{i}_{\Lambda_\OO}(M) := \mathrm{Ext}^{i}_{\Lambda_\OO}(M, \Lambda_\OO )  \text{\quad} \mathrm{for} \text{\quad} i \geq 0.$$

For a $\Lambda_\OO$-module $M$, $M^{\iota}$ denotes the same module $M$ but the action of $\Gamma$ is changed via the involution sending $\gamma \to \gamma^{-1}$ for every $\gamma \in \Gamma$.

\begin{lemma} \label{4.2}
Let $M$ be a finitely generated torsion $\Lambda_\OO$-module such that $M_{\Gamma_n}$ is finite for each $n$, then $$ a^{1}_{\Lambda_\OO}(M) \cong \varprojlim_n ({M^{\iota}}^{\vee})^{\Gamma_n}.$$
\end{lemma}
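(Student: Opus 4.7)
The plan is to compute both sides of the desired isomorphism via a free resolution of $M$ over $\Lambda_\OO$. First, I would reduce to the case where $M$ has no nonzero finite submodule. If $M_0 \subset M$ denotes the maximal finite submodule, then $M_0$ is pseudo-null over the two-dimensional regular local ring $\Lambda_\OO$, so $a^1_{\Lambda_\OO}(M_0) = \mathrm{Ext}^1_{\Lambda_\OO}(M_0, \Lambda_\OO) = 0$ (by Grothendieck local duality, pseudo-null modules are killed by $\mathrm{Ext}^i_{\Lambda_\OO}(-, \Lambda_\OO)$ for $i \le 1$), so $a^1(M) \cong a^1(M/M_0)$. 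The contribution of $M_0$ to the right-hand side also vanishes because $(M_0^\iota)^\vee$ is finite, its $\Gamma_n$-invariants stabilize for large $n$, and the corestriction transition maps then restrict to multiplication by $|\Gamma_n/\Gamma_{n+1}| = p$ on a finite $\pi$-power-torsion $\OO$-module, so the inverse limit is zero.

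For $M$ finitely generated torsion with no finite submodule, $M$ has projective dimension exactly one over $\Lambda_\OO$, so there is a resolution
\begin{equation*}
0 \longrightarrow \Lambda_\OO^{r} \stackrel{A}{\longrightarrow} \Lambda_\OO^{r} \longrightarrow M \longrightarrow 0,
\end{equation*}
with $A$ a square matrix of nonzero determinant (the two free modules have equal rank because $M$ is torsion). Applying $\mathrm{Hom}_{\Lambda_\OO}(-, \Lambda_\OO)$ and using $\mathrm{Hom}_{\Lambda_\OO}(M, \Lambda_\OO) = 0$ gives $a^1_{\Lambda_\OO}(M) \cong \Lambda_\OO^{r}/A^{T} \Lambda_\OO^{r}$. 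To compute the right-hand side, I tensor the resolution with $\Lambda_\OO/\omega_n$ over $\Lambda_\OO$, which (using flatness of $\Lambda_\OO^{r}$) yields
\begin{equation*}
0 \longrightarrow \mathrm{Tor}_1^{\Lambda_\OO}(M, \Lambda_\OO/\omega_n) \longrightarrow (\Lambda_\OO/\omega_n)^{r} \stackrel{\overline{A}}{\longrightarrow} (\Lambda_\OO/\omega_n)^{r} \longrightarrow M_{\Gamma_n} \longrightarrow 0.
\end{equation*}
I then apply Pontryagin duality (exact on discrete $\OO$-torsion modules) and use the Frobenius self-duality of the finite group algebra $\Lambda_\OO/\omega_n \cong \OO[\Gamma/\Gamma_n]$, whose Pontryagin dual is isomorphic to itself as a module up to the contragredient $\iota$-twist. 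This identifies $((M^\iota)^\vee)^{\Gamma_n} \cong (M_{\Gamma_n}^{\iota})^\vee$ with $\ker\!\bigl(A^{T} : (\Lambda_\OO/\omega_n)^{r} \to (\Lambda_\OO/\omega_n)^{r}\bigr)$. Finally, taking the inverse limit over $n$ with respect to the corestriction/trace transition maps and using $\Lambda_\OO = \varprojlim_n \Lambda_\OO/\omega_n$, the Mittag--Leffler property supplied by the hypothesis that each $M_{\Gamma_n}$ is finite lets a diagram chase identify the inverse limit with $\mathrm{coker}(A^{T}) = a^1_{\Lambda_\OO}(M)$.

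The main obstacle will be the careful bookkeeping of the $\iota$-twist through Pontryagin duality and the Frobenius self-duality of the group ring; this twist is precisely the reason the statement involves $M^\iota$ (rather than $M$) on the right-hand side. A secondary technical point is verifying that the natural corestriction maps on $\Gamma_n$-invariants correspond, under the dualization, to multiplication by $\omega_{n+1}/\omega_n$ at the level of the auxiliary kernels, so that the inverse limit correctly reconstructs the cokernel presentation of $a^1(M)$ rather than some related Tate-module-like object.
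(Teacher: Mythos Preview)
The paper does not prove this lemma; it simply cites Perrin-Riou \cite[Proposition 1.3.1]{pr}. Your overall strategy of computing both sides via a length-one free resolution (after removing the maximal finite submodule) is the standard homological approach and is sound in outline.

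There is, however, a genuine error in the dualization step. You assert that the Pontryagin dual of $\OO[\Gamma/\Gamma_n]$ is isomorphic to itself up to the $\iota$-twist by ``Frobenius self-duality.'' This is false: the Pontryagin dual of a free $\OO$-module of rank $p^n$ is $(\FF/\OO)^{p^n}$, which is divisible and certainly not free. Frobenius self-duality does give $\Hom_\OO(\OO[\Gamma/\Gamma_n],\OO) \cong \OO[\Gamma/\Gamma_n]^\iota$, but that is $\OO$-linear duality, not Pontryagin duality, and you cannot apply it to the four-term sequence whose middle terms are the $\OO$-free modules $(\Lambda_\OO/\omega_n)^r$ while simultaneously invoking Pontryagin duality on the finite end terms. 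Consequently your identification of $((M^\iota)^\vee)^{\Gamma_n}$ with $\Ker\bigl(\overline{A^T}\colon (\Lambda_\OO/\omega_n)^r \to (\Lambda_\OO/\omega_n)^r\bigr)$ cannot be right: that kernel is $a^1_{\Lambda_\OO}(M)^{\Gamma_n} = \Hom_{\Lambda_\OO}(M,\Lambda_\OO/\omega_n)$, and since any $\Lambda_\OO$-map factors through the finite module $M_{\Gamma_n}$ into the $\OO$-free module $\Lambda_\OO/\omega_n$, this Hom vanishes. Your proposed inverse limit would therefore be zero.

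The repair is to use that $N^\vee \cong \mathrm{Ext}^1_\OO(N,\OO)$ for \emph{finite} $\OO$-modules $N$. Since $M^{\Gamma_n}=0$ under the hypotheses, the reduced sequence $0 \to (\Lambda_\OO/\omega_n)^r \xrightarrow{\overline{\iota(A)}} (\Lambda_\OO/\omega_n)^r \to (M^\iota)_{\Gamma_n} \to 0$ is short exact; applying $\Hom_\OO(-,\OO)$ to it, Frobenius self-duality now legitimately applies to the free terms, and the long exact Ext sequence identifies $((M^\iota)_{\Gamma_n})^\vee$ with the \emph{cokernel} $\Coker(\overline{A^T}) = a^1_{\Lambda_\OO}(M)_{\Gamma_n}$, not the kernel. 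Passing to the inverse limit along the natural projections then recovers $a^1_{\Lambda_\OO}(M)$, after checking that under this identification the trace transition maps on $((M^\iota)^\vee)^{\Gamma_n}$ correspond to the projection maps on $a^1_{\Lambda_\OO}(M)_{\Gamma_n}$.
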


\begin{proof}
This is well known. A proof can be found in \cite[Proposition 1.3.1]{pr}.
\end{proof}

\begin{lemma} \label{4.3}
Let $M$ be a finitely generated torsion $\Lambda_\OO$-module. Then $$ C_{\Lambda_\OO}(M)= C_{\Lambda_\OO}(a^{1}_{\Lambda_\OO}(M)),$$ as ideals in ${\Lambda_\OO}.$
\end{lemma}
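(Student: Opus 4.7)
The plan is to reduce to the case of elementary modules via the structure theorem for finitely generated torsion $\Lambda_\OO$-modules, and then to compute $a^1_{\Lambda_\OO}$ explicitly on each cyclic summand.

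Since $\Lambda_\OO$ is a $2$-dimensional regular local ring, any finitely generated torsion $\Lambda_\OO$-module $M$ admits a pseudo-isomorphism $f\colon M \to E(M)$, where
\[
E(M) = \bigoplus_{i=1}^r \Lambda_\OO/(g_i^{e_i})
\]
with each $g_i$ either equal to the uniformizer $\pi$ or a distinguished irreducible polynomial, and by definition $C_{\Lambda_\OO}(M) = \prod_i (g_i^{e_i}) = C_{\Lambda_\OO}(E(M))$. First I would compute $a^1_{\Lambda_\OO}$ on the elementary module: for each summand, the free resolution
\[
0 \lra \Lambda_\OO \xrightarrow{\,\cdot g_i^{e_i}\,} \Lambda_\OO \lra \Lambda_\OO/(g_i^{e_i}) \lra 0
\]
yields $\mathrm{Ext}^1_{\Lambda_\OO}(\Lambda_\OO/(g_i^{e_i}), \Lambda_\OO) \cong \Lambda_\OO/(g_i^{e_i})$, so $a^1_{\Lambda_\OO}(E(M)) \cong E(M)$ as $\Lambda_\OO$-modules (ignoring the $\Gamma$-action, which is irrelevant for the characteristic ideal). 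In particular, $C_{\Lambda_\OO}(a^1_{\Lambda_\OO}(E(M))) = C_{\Lambda_\OO}(E(M)) = C_{\Lambda_\OO}(M)$.

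Next I would transfer this computation from $E(M)$ back to $M$. Let $K := \ker(f)$ and $C := \mathrm{coker}(f)$; both are pseudo-null. The key input is the standard vanishing
\[
\mathrm{Ext}^i_{\Lambda_\OO}(N, \Lambda_\OO) = 0 \quad \text{for } i = 0, 1,
\]
whenever $N$ is a pseudo-null $\Lambda_\OO$-module. This follows because such $N$ is supported in codimension $2$, so by local duality (or a direct Koszul computation on $\Lambda_\OO \cong \OO[[X]]$) only $\mathrm{Ext}^2_{\Lambda_\OO}(N, \Lambda_\OO)$ can be non-zero. Splitting $f$ into $0 \to K \to M \to \mathrm{Im}(f) \to 0$ and $0 \to \mathrm{Im}(f) \to E(M) \to C \to 0$ and applying $\mathrm{Hom}_{\Lambda_\OO}(-, \Lambda_\OO)$, the associated long exact sequences give isomorphisms $a^1_{\Lambda_\OO}(M) \cong a^1_{\Lambda_\OO}(\mathrm{Im}(f)) \cong a^1_{\Lambda_\OO}(E(M))$. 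Combining with Step~1 yields
\[
C_{\Lambda_\OO}(a^1_{\Lambda_\OO}(M)) = C_{\Lambda_\OO}(a^1_{\Lambda_\OO}(E(M))) = C_{\Lambda_\OO}(M),
\]
which is the desired equality.

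The only real subtlety is the Ext-vanishing statement for pseudo-null modules in Step~2; everything else is a direct calculation on the elementary module plus a diagram chase. I would either cite this from a standard reference (Neukirch-Schmidt-Wingberg or Perrin-Riou \cite{pr}) or justify it in one line by noting that $\Lambda_\OO$ is a $2$-dimensional Cohen-Macaulay regular local ring, so Auslander-Buchsbaum gives $\mathrm{grade}(N) = 2$ for any pseudo-null $N$, forcing $\mathrm{Ext}^i_{\Lambda_\OO}(N, \Lambda_\OO) = 0$ for $i < 2$.
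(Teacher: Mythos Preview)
Your argument is essentially correct and is the standard proof of this well-known fact. One small imprecision: in Step~2, the long exact sequence attached to $0 \to \mathrm{Im}(f) \to E(M) \to C \to 0$ gives
\[
0 \lra a^1_{\Lambda_\OO}(E(M)) \lra a^1_{\Lambda_\OO}(\mathrm{Im}(f)) \lra \mathrm{Ext}^2_{\Lambda_\OO}(C,\Lambda_\OO),
\]
and $\mathrm{Ext}^2_{\Lambda_\OO}(C,\Lambda_\OO)$ is pseudo-null but need not vanish, so the map $a^1_{\Lambda_\OO}(E(M)) \to a^1_{\Lambda_\OO}(\mathrm{Im}(f))$ is in general only a pseudo-isomorphism rather than an isomorphism. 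This is harmless for the conclusion, since pseudo-isomorphic torsion modules share the same characteristic ideal; you should just adjust the wording accordingly.

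As for comparison with the paper: the paper does not actually supply its own argument for this lemma---it records the statement as well known and refers to \cite[Lemma~3.5]{jp}. Your write-up is precisely the sort of structure-theorem-plus-Ext-vanishing argument one finds behind such a citation, so there is no substantive difference in approach to discuss.
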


\begin{proof}
This is well known. A proof can be found in \cite[Lemma 3.5]{jp}.
\end{proof}

\begin{lemma} \label{4.4}
Let $M$ and $N$ be two finitely generated torsion $\Lambda_\OO$-modules. Then there exists a character $\varphi: \Gamma \to \OO^*$ such that $M(\varphi)_{\Gamma_n}$ and $N(\varphi^{-1})_{\Gamma_n}$ are finite for all $n$.
\end{lemma}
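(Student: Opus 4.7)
The plan is to prove the lemma via a cardinality argument: the set of characters for which the finiteness condition fails will turn out to be at most countable, while the space of continuous characters $\Gamma \to \OO^\times$ is uncountable. The first step is to use the structure theorem for finitely generated torsion $\Lambda_\OO$-modules to reduce to elementary modules. Concretely, there is a pseudo-isomorphism $M \to E_M := \bigoplus_i \Lambda_\OO/(g_i)$, where each $g_i$ is either $\pi^{a_i}$ or a power of a distinguished irreducible polynomial $f_i \in \OO[[T]]$ (after fixing a topological generator $\gamma_0$ of $\Gamma$ and setting $T = \gamma_0 - 1$). Since pseudo-null $\Lambda_\OO$-modules are finite, and taking $\Gamma_n$-coinvariants preserves finiteness, a short exact sequence chase shows $M(\varphi)_{\Gamma_n}$ is finite iff $E_M(\varphi)_{\Gamma_n}$ is. So I may assume $M$ and $N$ are themselves elementary.

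Next, for $u := \varphi(\gamma_0) \in \OO^\times$, the twist $M(\varphi)$ is the module $M$ with $\gamma_0$ acting as $u(1+T)$, so
\[
M(\varphi)_{\Gamma_n} \;=\; M \big/ \bigl((u(1+T))^{p^n} - 1\bigr)M.
\]
For a summand $\Lambda_\OO/(\pi^a)$, the polynomial $(u(1+T))^{p^n}-1$ has unit leading coefficient in $(\OO/\pi^a)[[T]]$, so Weierstrass division makes the quotient a finite free $\OO/\pi^a$-module of rank $p^n$; these summands cause no trouble for any $\varphi$. For a summand $\Lambda_\OO/(f^b)$ with $f$ distinguished irreducible and $\alpha \in \overline{\Q_p}$ a root, the quotient $\Lambda_\OO/(f^b,(u(1+T))^{p^n}-1)$ is finite iff $u(1+\alpha) \notin \mu_{p^n}$. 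Hence $M(\varphi)_{\Gamma_n}$ is finite for every $n$ iff $u(1+\alpha) \notin \mu_{p^\infty}$ for every root $\alpha$ of every distinguished irreducible factor of the characteristic ideal of $M$. The same reasoning applied to $N(\varphi^{-1})$ gives the condition $u^{-1}(1+\beta) \notin \mu_{p^\infty}$ for every root $\beta$ of the characteristic ideal of $N$.

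Finally, I conclude by cardinality. Let $\{\alpha_i\}$ and $\{\beta_j\}$ be the (finitely many) roots arising from $M$ and $N$. The set of obstructed values of $u$ is contained in
\[
\Bigl\{\tfrac{\zeta}{1+\alpha_i} : \zeta \in \mu_{p^\infty},\, i\Bigr\} \;\cup\; \Bigl\{\tfrac{1+\beta_j}{\zeta} : \zeta \in \mu_{p^\infty},\, j\Bigr\},
\]
a finite union of countable sets, hence countable. On the other hand, the continuous characters $\varphi \colon \Gamma \to \OO^\times$ correspond bijectively (via $\varphi \mapsto \varphi(\gamma_0)$) to elements of the pro-$p$ part $1 + \m$ of $\OO^\times$, which is uncountable. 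Choosing any $u \in 1 + \m$ outside the countable bad set yields the desired $\varphi$. The main (mild) obstacle is getting the bookkeeping of the structure theorem and pseudo-isomorphisms right so that finiteness of $\Gamma_n$-coinvariants transfers from $M$ to its elementary quotient; the cardinality argument itself is soft.
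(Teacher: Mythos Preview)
Your argument is correct. The paper does not actually prove this lemma; it simply records that the result is well known and refers to \cite[Lemma~3.6]{jp}. The route you take---reduce via the structure theorem to elementary modules, observe that the $\pi$-power summands never obstruct, translate the obstruction for distinguished summands into the condition $u(1+\alpha)\in\mu_{p^\infty}$ on finitely many roots $\alpha$, and then conclude by a cardinality count of characters---is precisely the standard proof of this statement and is what one finds in the cited reference. There is nothing to compare: you have supplied the argument the paper omits.

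One small remark on presentation: your claim that $\Lambda_\OO/(f^b,(u(1+T))^{p^n}-1)$ is finite if and only if $u(1+\alpha)\notin\mu_{p^n}$ is correct, but the cleanest justification is that the only height-one prime of $\Lambda_\OO$ containing $f^b$ is $(f)$, so the quotient is finite exactly when $f\nmid (u(1+T))^{p^n}-1$, which (since $f$ is distinguished irreducible, hence irreducible in $\FF[T]$) is equivalent to the stated root condition. Similarly, the bijection between continuous characters $\Gamma\to\OO^\times$ and $1+\mathfrak m$ holds because $\Gamma\cong\Z_p$ is pro-$p$ and $1+\mathfrak m$ is the pro-$p$ part of $\OO^\times$; this makes the uncountability step immediate. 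Neither point affects the validity of your argument.
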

\begin{proof}
This is well known. A proof can be found in \cite[Lemma 3.6]{jp}.
\end{proof}

Following Rubin, for a character $\varphi: \Gamma \to \OO^*$, we define an $\OO$-linear automorphism $Tw_{\varphi} : \Lambda_\OO \to \Lambda_\OO$ induced by the map $Tw_{\varphi}(\gamma)= \varphi(\gamma)\gamma$. For a finitely generated torsion $\Lambda_\OO$-module $M$, $Tw_{\varphi}(M)$ denotes the same module $M$ but the action of $\Gamma$ is changed via $Tw_{\varphi}(\gamma)$ for every $\gamma \in \Gamma$.

\begin{lemma}\label{twistofideal}
Let $M$ be a finitely generated torsion $\Lambda_\OO$-module. Then for any character $\varphi: \Gamma \to \OO^\times$ we have
\begin{equation}\label{eq4.26}
Tw_{\varphi} (C_{\Lambda_\OO} (M \otimes \varphi)) = C_{\Lambda_\OO} (M).
\end{equation} 

\end{lemma}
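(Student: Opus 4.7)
My plan is to reduce to the case of elementary $\Lambda_\OO$-modules via the structure theorem and then do a direct computation. Since $M$ is a finitely generated torsion $\Lambda_\OO$-module, there is a pseudo-isomorphism $M \sim \bigoplus_{i=1}^r \Lambda_\OO/(g_i)$ with $g_i \in \Lambda_\OO$ and $C_{\Lambda_\OO}(M) = \big(\prod_{i} g_i\big)$ by definition of the characteristic ideal. The first task is to observe that the twisting functor $N \mapsto N \otimes \varphi$ preserves pseudo-isomorphisms: it is an $\OO$-linear auto-equivalence of the category of $\Lambda_\OO$-modules whose quasi-inverse is $N \mapsto N \otimes \varphi^{-1}$, and it preserves the underlying $\OO$-module so that finiteness is preserved. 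Hence kernel and cokernel of a pseudo-isomorphism remain finite after twisting, and we obtain $M \otimes \varphi \sim \bigoplus_{i} (\Lambda_\OO/(g_i)) \otimes \varphi$.

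The heart of the argument is the identification, for each $g \in \Lambda_\OO$,
\begin{equation*}
(\Lambda_\OO/(g)) \otimes \varphi \;\cong\; \Lambda_\OO/(Tw_{\varphi^{-1}}(g))
\end{equation*}
as $\Lambda_\OO$-modules. I would produce this explicitly via the map $\lambda \mapsto \overline{Tw_\varphi(\lambda)}$ and verify three points: (i) it is $\Lambda_\OO$-linear with respect to the twisted action on the target because $Tw_\varphi$ is a ring homomorphism and the twisted action is by definition $\mu \cdot \bar x = \overline{Tw_\varphi(\mu)\, x}$; (ii) it is surjective because $Tw_\varphi$ is a ring automorphism of $\Lambda_\OO$; and (iii) its kernel is the preimage of $(g)$ under $Tw_\varphi$, namely $\big(Tw_{\varphi^{-1}}(g)\big)$.

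Combining the two steps, and using that $Tw_{\varphi^{-1}}$ is a ring automorphism so commutes with products of ideals,
\begin{equation*}
C_{\Lambda_\OO}(M \otimes \varphi) \;=\; \Big(\prod_{i} Tw_{\varphi^{-1}}(g_i)\Big) \;=\; Tw_{\varphi^{-1}}\big(C_{\Lambda_\OO}(M)\big),
\end{equation*}
and applying the inverse automorphism $Tw_\varphi$ to both sides yields \eqref{eq4.26}. The argument is essentially formal once one verifies that twisting respects pseudo-isomorphism; I expect the only real pitfall to be bookkeeping, specifically keeping straight which of $\varphi$ and $\varphi^{-1}$ appears on each side of the elementary-module identification, since an inadvertent swap would produce a formula that looks correct but is off by the involution.
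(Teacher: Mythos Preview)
Your argument is correct: reducing to elementary modules via the structure theorem, checking that twisting respects pseudo-isomorphism, and identifying $(\Lambda_\OO/(g))\otimes\varphi\cong\Lambda_\OO/(Tw_{\varphi^{-1}}(g))$ is exactly the standard proof, and your bookkeeping with $\varphi$ versus $\varphi^{-1}$ is right. The paper itself does not give an argument at all --- it simply records the result as well known and cites \cite[Chap.~VI, Lemma~1.2]{ru} --- so your write-up is more detailed than, but entirely consistent with, what the paper invokes.
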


\begin{proof}
This is well known. A proof can be found in \cite[Chap.VI, Lemma-1.2]{ru}.
\end{proof}

\begin{rem}\label{fingen2}
Let $(\rho,V,V_\wp^+)$ be a triple which satisfies {\bf HYP 1 \& 2}. Then for $\dagger \in \{g, GV \}$,  $X^{\dagger}_{\Q_\infty}(T,T{_\wp^+})$ is a finitely generated $\Lambda_\OO$-module by Lemma \ref{fingen}. Since $X^{\dagger}_{\Q_\infty}(T,T{_\wp^+},\varphi) \cong X^{\dagger}_{\Q_\infty}(T,T{_\wp^+}) \otimes_\OO \OO(\varphi^{-1})$, it follows that $X^{\dagger}_{\Q_\infty}(T,T{_\wp^+},\varphi)$ is also a finitely generated $\Lambda_\OO$-module. Moreover, if $X^{\dagger}_{\Q_\infty}(T,T{_\wp^+})$ is $\Lambda_\OO$-torsion, then $X^{\dagger}_{\Q_\infty}(T,T{_\wp^+}, \varphi)$ is also $\Lambda_\OO$-torsion.\\
Recall that $V^*= U(\kappa)$ for some Artin representaion $(\sigma,U)$ satisfying {\bf HYP 1 \& 2} and $V(\varphi)^*= V^*(\varphi^{-1})= U(\varphi^{-1}\kappa)$. Thus, it follows that $X^{\dagger}_{\Q_\infty}(T',(T'){_\wp^+}, \varphi)$ and $X^{\dagger}_{\Q_\infty}(T^*,(T^*){_\wp^+},\varphi)$ are finitely generated $\Lambda_\OO$-module. Moreover, if $X^{\dagger}_{\Q_\infty}(T',(T'){_\wp^+})$ is $\Lambda_\OO$-torsion, then $X^{\dagger}_{\Q_\infty}(T',(T'){_\wp^+}, \varphi)$  and $X^{\dagger}_{\Q_\infty}(T^*,(T^*){_\wp^+},\varphi)$ are also $\Lambda_\OO$-torsion.\\
\end{rem}

Now we are ready to prove our main theorem. Since the theorem relates characteristic ideals of the dual Selmer groups, it is a necessary assumption that the dual Selmer groups are $\Lambda$-torsion. Proof of our algebraic functional equation follows an argument similar to arguments provided in \cite{pr,jp,jm,jo}.\\

\begin{theorem}\label{4.7}
Let $(\rho,V,V_\wp^+)$ be a triple that satisfies  {\bf HYP 1, 2 \& Tor{$_\rho$}}, and $T$ be an $\OO$-lattice of $V$ invariant under the action of $\Delta$. Let $(\sigma,U)$ denote the Artin representations $U:= \Hom_{\FF}(V, \FF(\tau))$, where $\tau$ denotes the Teichmuller character and $T^\prime$ the corresponding lattice in $U$. Assume that $(\sigma, U, U_\wp^+)$ satisfies {\bf HYP Tor{$_\sigma$}}. Further, assume that the $\Delta_\wp$-representations $V$ and $U$ do not contain trivial sub-representation. Then the functional equation holds for the dual Selmer group associated to $(\rho,V,V_\wp^+)$, that is we have the following equality of ideals in ${\Lambda_\OO}$
\begin{equation*}
  C_{\Lambda_\OO}(X^{GV}_{\Q_\infty}(T,T{_\wp^+})^\iota)= C_{\Lambda_\OO}(X^{GV}_{\Q_\infty}(T^*,(T^*){_\wp^+})) = Tw_{\kappa} (C_{\Lambda_\OO} (X^{GV}_{\Q_\infty}(T',(T'){_\wp^+})),
\end{equation*}
where $\kappa: \Gamma \to 1+ p\Z_p$ is an isomorphism coming from the action of  $\Gal(K_{\infty}/K)$ on $\mu_{p^{\infty}}$.

\end{theorem}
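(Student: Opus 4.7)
My plan is to first dispense with the second equality as a formal twist calculation, then reduce the first equality to the analogous statement for $X^g$ via Theorem \ref{2.1} and Corollary \ref{2.3}, and finally carry out a Perrin-Riou style argument using the generalised Cassels-Tate pairing together with the control theorems. For the second equality, since $V^* \cong U(\kappa)$ gives $X^{GV}_{\Q_\infty}(T^*,(T^*){_\wp^+}) \cong X^{GV}_{\Q_\infty}(T',(T'){_\wp^+}) \otimes_\OO \OO(\kappa^{-1})$, a direct application of Lemma \ref{twistofideal} produces $C_{\Lambda_\OO}(X^{GV}_{\Q_\infty}(T^*,(T^*){_\wp^+})) = Tw_\kappa(C_{\Lambda_\OO}(X^{GV}_{\Q_\infty}(T',(T'){_\wp^+})))$. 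The substantive content is thus the first equality. The assumption that $V$ and $U$ contain no trivial $\Delta_\wp$-subrepresentation translates to $H^0(\Delta_\wp,V) = H^0(\Delta_\wp,U) = 0$, which allows me to invoke Theorem \ref{2.1} and Corollary \ref{2.3} to get pseudo-isomorphisms between $X^g_{\Q_\infty}$ and $X^{GV}_{\Q_\infty}$ in both the $(T, T_\wp^+)$ and $(T^*, (T^*)_\wp^+)$ settings. Since characteristic ideals are invariant under pseudo-isomorphism, it suffices to show $C_{\Lambda_\OO}(X^g_{\Q_\infty}(T, T_\wp^+)^\iota) = C_{\Lambda_\OO}(X^g_{\Q_\infty}(T^*, (T^*)_\wp^+))$.

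Write $X := X^g_{\Q_\infty}(T, T_\wp^+)$ and $Y := X^g_{\Q_\infty}(T^*, (T^*)_\wp^+)$, both finitely generated torsion $\Lambda_\OO$-modules by Remark \ref{fingen2} together with \textbf{HYP Tor${}_\rho$}, \textbf{HYP Tor${}_\sigma$}. I will apply Lemma \ref{4.4} to the pair $(X, Y)$ to obtain a continuous character $\varphi : \Gamma \to \OO^\times$ such that $X(\varphi)_{\Gamma_n}$ and $Y(\varphi^{-1})_{\Gamma_n}$ are finite for every $n$. Pontryagin-dualising, $Sel^g_{\Q_\infty}(A,A_\wp^+,\varphi^{-1})^{\Gamma_n}$ and $Sel^g_{\Q_\infty}(A^*,(A^*)_\wp^+,\varphi)^{\Gamma_n}$ are then finite, and by the control theorems (Theorem \ref{3.4} and its corollary) so are the finite-level Selmer groups $Sel^g_{\Q_n}(A,A_\wp^+,\varphi^{-1})$ and $Sel^g_{\Q_n}(A^*,(A^*)_\wp^+,\varphi)$, differing from their $\Gamma_n$-invariant counterparts by finite kernels and cokernels of size bounded uniformly in $n$. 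Under this finiteness, Proposition \ref{4.1} supplies a perfect Cassels-Tate pairing at each level, yielding $Sel^g_{\Q_n}(A,A_\wp^+,\varphi^{-1}) \cong X^g_{\Q_n}(T^*,(T^*)_\wp^+,\varphi)$; chaining with the control theorems on both sides produces, for each $n$, identifications $Sel^g_{\Q_\infty}(A,A_\wp^+,\varphi^{-1})^{\Gamma_n} \sim Y(\varphi^{-1})_{\Gamma_n}$ up to finite kernel and cokernel bounded independently of $n$.

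Next I pass to the inverse limit in $n$. Mittag-Leffler for systems of finite groups ensures $\varprojlim^1$ vanishes, so the limit map is a pseudo-isomorphism of $\Lambda_\OO$-modules. On the left, after applying the $\iota$-operation (which does not disturb underlying $\Gamma_n$-fixed subgroups), Lemma \ref{4.2} identifies the inverse limit with $a^1_{\Lambda_\OO}(X(\varphi))$; on the right, $\varprojlim_n Y(\varphi^{-1})^\iota_{\Gamma_n} = Y(\varphi^{-1})^\iota$ by completeness of the finitely generated $\Lambda_\OO$-module $Y(\varphi^{-1})^\iota$ with respect to the $\{I_{\Gamma_n}\}$-filtration. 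I thus obtain $a^1_{\Lambda_\OO}(X(\varphi)) \sim Y(\varphi^{-1})^\iota$, and Lemma \ref{4.3} gives $C_{\Lambda_\OO}(X(\varphi)) = C_{\Lambda_\OO}(a^1_{\Lambda_\OO}(X(\varphi))) = C_{\Lambda_\OO}(Y(\varphi^{-1})^\iota)$. Observing that $Y(\varphi^{-1})^\iota = Y^\iota(\varphi)$ (the $\iota$-operation inverts the $\Gamma$-action on both tensor factors), Lemma \ref{twistofideal} applied to both sides yields $Tw_{\varphi^{-1}}(C_{\Lambda_\OO}(X)) = Tw_{\varphi^{-1}}(C_{\Lambda_\OO}(Y^\iota))$, which simplifies to $C_{\Lambda_\OO}(X) = C_{\Lambda_\OO}(Y^\iota)$, equivalent to the desired $C_{\Lambda_\OO}(X^\iota) = C_{\Lambda_\OO}(Y)$ upon applying the involution $\iota$ to the characteristic ideals. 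The hardest part will be verifying that the level-wise Cassels-Tate identifications are compatible with restriction/corestriction so that the inverse limit genuinely realises $a^1_{\Lambda_\OO}(X(\varphi))$ up to pseudo-isomorphism; this is essentially the functoriality of Flach's pairing but demands the most careful bookkeeping of the interlocking $\varphi$- and $\iota$-twists.
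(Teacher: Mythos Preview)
Your proof is correct and follows the same Perrin-Riou style strategy as the paper: twist by a character $\varphi$ via Lemma \ref{4.4} to force finiteness at each level, apply Flach's pairing (Proposition \ref{4.1}), pass to the inverse limit using Lemma \ref{4.2}, and untwist with Lemmas \ref{4.3} and \ref{twistofideal}. The only difference is organizational---you reduce from $X^{GV}$ to $X^g$ once at the $\Q_\infty$ level (via Theorem \ref{2.1} and Corollary \ref{2.3}) and then run the limit argument entirely with $Sel^g$ and the control theorem \ref{3.4}, whereas the paper keeps $X^{GV}$ throughout and inserts the $g$--$GV$ comparison into each finite-level map $\eta_n$; both routes are equivalent, and your slightly informal placement of the $\iota$-twist still lands on the correct identity.
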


\begin{proof}  
Under the assumption  {\bf HYP Tor{$_\rho$}} and {\bf HYP Tor{$_\sigma$}}, and by Lemma \ref{4.4}, we can get a $\varphi$ such that $(X^{GV}_{\Q_\infty}(T,T{_\wp^+},\varphi^{-1}))_{\Gamma_n}$ and $(X^{GV}_{\Q_\infty}(T^*,(T^*){_\wp^+},\varphi))_{\Gamma_n}$ are finite for all $n$. Thus by Theorem \ref{3.1} and Corollary \ref{3.2}, the groups $Sel^{GV}_{\Q_n}(A,{A}_\wp^+,\varphi^{-1})$ and $Sel^{GV}_{\Q_n}(A^*,{(A^*)}_\wp^+,\varphi)$ are finite, respectively for all $n$.
Now by Theorem \ref{2.1} and Corollary \ref{2.3}, $Sel^{g}_{\Q_n}(A,{A}_\wp^+,\varphi^{-1})$ and $Sel^{g}_{\Q_n}(A^*,{(A^*)}_\wp^+,\varphi)$ are finite for all $n$.
 Then by generalized Cassels-Tate pairing of Flach (Corollary \ref{4.1}), we have $(Sel^{g}_{\Q_n}(A^*,{(A^*)}_\wp^+,\varphi))^\vee \cong Sel^{g}_{\Q_n}(A,{A}_\wp^+,\varphi^{-1})$ for all $n$. 
From Corollary \ref{2.3} (respectively, Theorem \ref{2.1}) the kernel and the cokernel of the map $(Sel^{GV}_{\Q_n}(A^*,{(A^*)}_\wp^+,\varphi))^\vee \to (Sel^{g}_{\Q_n}(A^*,{(A^*)}_\wp^+,\varphi))^\vee$ (respectively, $Sel^{g}_{\Q_n}(A,{A}_\wp^+,\varphi^{-1}) \to Sel^{GV}_{\Q_n}(A,{A}_\wp^+,\varphi^{-1})$), is finite and independent on $n$. 
 From Theorem \ref{3.1}, the restriction map $Sel^{GV}_{\Q_n}(A,{A}_\wp^+,\varphi^{-1})  \to  Sel^{GV}_{\Q_\infty}(A,{A}_\wp^+,\varphi^{-1})^{\Gamma_{n}}$ is injective, and the cardinality of cokernel is finite and independent on $n$. Now by combining all the above results, we get a map
\begin{equation*}
\begin{split}
	\eta_n: (Sel^{GV}_{\Q_n}(A^*,{(A^*)}_\wp^+,\varphi))^\vee \to (Sel^{g}_{\Q_n}(A^*,{(A^*)}_\wp^+,\varphi))^\vee 
 \cong Sel^{g}_{\Q_n}(A,{A}_\wp^+,\varphi^{-1})   & \\ \to Sel^{GV}_{\Q_n}(A,{A}_\wp^+,\varphi^{-1})  \to  Sel^{GV}_{\Q_\infty}(A,{A}_\wp^+,\varphi^{-1})^{\Gamma_{n}},
\end{split}
\end{equation*}

which is a pseudo-isomorphism of an $\Lambda_{\OO}$-modules. 

Now by taking the inverse limit of the maps $\eta_n$, we get a pseudo-isomorphism
\begin{equation}\label{4.28}
 \eta: X^{GV}_{\Q_\infty}(T^*,(T^*){_\wp^+},\varphi) \to \varprojlim_n Sel^{GV}_{\Q_\infty}(A,{A}_\wp^+,\varphi^{-1})^{\Gamma_{n}},
\end{equation}
as an ${\Lambda_\OO}$-modules.

By applying Lemma \ref{4.2} to $X^{GV}_{\Q_\infty}(T,T_\wp^+,\varphi^{-1})^{\iota}$ , we get

\begin{equation}\label{4.29}
\varprojlim_n Sel^{GV}_{\Q_\infty}(A,{A}_\wp^+,\varphi^{-1})^{\Gamma_{n}} = \varprojlim_n ({X^{GV}_{\Q_\infty}(T,T_\wp^+,\varphi^{-1})^\vee )}^{\Gamma_{n}}= a^{1}_{\Lambda_\OO}(X^{GV}_{\Q_\infty}(T,T_\wp^+,\varphi^{-1})^{\iota}).
\end{equation}
From equations \eqref{4.28} and \eqref{4.29}, we have an ${\Lambda_\OO}$-modules pseudo-isomorphism 
\begin{equation}\label{4.30}
  X^{GV}_{\Q_\infty}(T^*,(T^*){_\wp^+},\varphi) \to a^{1}_{\Lambda_\OO}(X^{GV}_{\Q_\infty}(T,T_\wp^+,\varphi^{-1})^{\iota}).
\end{equation}

Recall that $X^{GV}_{\Q_\infty}(T^*,(T^*){_\wp^+}) = X^{GV}_{\Q_\infty}(T^*,(T^*){_\wp^+}) \otimes \varphi^{-1} $ and $a^{1}_{\Lambda_\OO}(X^{GV}_{\Q_\infty}(T,T_\wp^+,\varphi^{-1})^{\iota}) \cong a^{1}_{\Lambda_\OO}(X^{GV}_{\Q_\infty}(T,T_\wp^+)^{\iota}) \otimes \varphi^{-1}$.  By tensoring $\varphi$ with equation \eqref{4.30}, we get  a pseduo-isomorphsim

\begin{equation*}
  X^{GV}_{\Q_\infty}(T^*,(T^*){_\wp^+}) \to a^{1}_{\Lambda_\OO}(X^{GV}_{\Q_\infty}(T,(T){_\wp^+})^{\iota}).
\end{equation*}

Hence $C_{\Lambda_\OO}( X^{GV}_{\Q_\infty}(T^*,(T^*){_\wp^+}))=C_{\Lambda_\OO}(X^{GV}_{\Q_\infty}(T,(T){_\wp^+})^{\iota})$. By Lemma \ref{4.3}, we obtain  
\begin{equation}\label{4.31}
C_{\Lambda_\OO}( X^{GV}_{\Q_\infty}(T^*,(T^*){_\wp^+}))= C_{\Lambda_\OO}(X^{GV}_{\Q_\infty}(T, T_\wp^+)^{\iota}).
\end{equation}

From the relation $V^* \cong U \otimes_{\FF} \FF(\kappa)$, and by taking $M=X^{GV}_{\Q_\infty}(T^*,(T^*){_\wp^+})$ and $\varphi = \kappa$ in lemma \eqref{twistofideal}, we get
\begin{equation}\label{4.32}
 Tw_{\kappa} (C_{\Lambda_\OO}(X^{GV}_{\Q_\infty}(T',(T')_\wp^+)) = C_{\Lambda_\OO} (X^{GV}_{\Q_\infty}(T^*,(T^*){_\wp^+})).
\end{equation}

From equations \eqref{4.31} and \eqref{4.32}, we get the required results.

\end{proof}

\begin{corollary}
Let $(\rho,V,V_\wp^+)$ be a triple which satisfies  {\bf HYP 1, 2 \& Tor{$_\rho$}}, and $T$ be an $\OO$-lattice of $V$ invariant under the action of $\Delta$. Let $(\sigma,U)$ denote the Artin representations $U:= \Hom_{\FF}(V, \FF(\tau))$, where $\tau$ denotes the Teichmuller character and $T^\prime$ the corresponding lattice in $U$. Assume that $(\sigma, U, U_\wp^+)$ satisfies {\bf HYP Tor{$_\sigma$}}. Further, assume that the $\Delta_\wp$-representations $V$ and $U$ do not contain trivial sub-representation. Then the functional equation holds for the dual Selmer group $X^{g}_{\Q_\infty}(T,T{_\wp^+})$ associated to $(\rho,V,V_\wp^+)$, that is we have the following equality of ideals in ${\Lambda_\OO}$
\begin{equation}
  C_{\Lambda_\OO}(X^{g}_{\Q_\infty}(T,T{_\wp^+})^\iota)= C_{\Lambda_\OO}(X^{g}_{\Q_\infty}(T^*,(T^*){_\wp^+})) = Tw_{\kappa} (C_{\Lambda_\OO} (X^{g}_{\Q_\infty}(T',(T'){_\wp^+})),
\end{equation}
here $\kappa: \Gamma \to 1+ p\Z_p$ is an isomorphism coming from the action of  $\Gal(K_{\infty}/K)$ on $\mu_{p^{\infty}}$.
\end{corollary}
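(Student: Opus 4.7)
The plan is to deduce the corollary from Theorem \ref{4.7} by passing between the two Selmer structures via the comparison results of Section 3. Concretely, the map $Sel^{g}_{\Q_n}(A,A_\wp^+) \hookrightarrow Sel^{GV}_{\Q_n}(A,A_\wp^+)$ from Theorem \ref{2.1} is injective with cokernel finite of cardinality uniformly bounded in $n$; passing to the direct limit, $Sel^{g}_{\Q_\infty}(A,A_\wp^+)$ sits inside $Sel^{GV}_{\Q_\infty}(A,A_\wp^+)$ as a subgroup with finite index. Taking Pontryagin duals, one obtains a surjection $X^{GV}_{\Q_\infty}(T,T_\wp^+)\twoheadrightarrow X^{g}_{\Q_\infty}(T,T_\wp^+)$ with finite kernel, so the two are pseudo-isomorphic as $\Lambda_\OO$-modules. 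An entirely parallel argument using Corollary \ref{2.3} applied to the Artin representation $(\sigma,U,U_\wp^+)$ gives a pseudo-isomorphism $X^{GV}_{\Q_\infty}(T',(T')_\wp^+) \sim X^{g}_{\Q_\infty}(T',(T')_\wp^+)$, and similarly after Tate twisting, a pseudo-isomorphism $X^{GV}_{\Q_\infty}(T^*,(T^*)_\wp^+) \sim X^{g}_{\Q_\infty}(T^*,(T^*)_\wp^+)$.

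Next, I would observe that pseudo-isomorphic finitely generated torsion $\Lambda_\OO$-modules have identical characteristic ideals, and that pseudo-isomorphism is preserved under the operations $M \mapsto M^{\iota}$ and $M \mapsto Tw_{\kappa}(M)$ (since both only alter the $\Gamma$-action, not the underlying $\OO$-module structure, and they carry pseudo-null modules to pseudo-null modules). In particular, {\bf HYP Tor}$_\rho$ and {\bf HYP Tor}$_\sigma$ imply that $X^{g}_{\Q_\infty}(T,T_\wp^+)$, $X^{g}_{\Q_\infty}(T',(T')_\wp^+)$ and $X^{g}_{\Q_\infty}(T^*,(T^*)_\wp^+)$ are all finitely generated torsion $\Lambda_\OO$-modules, so speaking of their characteristic ideals makes sense.

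Combining these two observations, the chain of equalities
\begin{equation*}
C_{\Lambda_\OO}(X^{g}_{\Q_\infty}(T,T_\wp^+)^\iota) = C_{\Lambda_\OO}(X^{GV}_{\Q_\infty}(T,T_\wp^+)^\iota),
\end{equation*}
\begin{equation*}
C_{\Lambda_\OO}(X^{g}_{\Q_\infty}(T^*,(T^*)_\wp^+)) = C_{\Lambda_\OO}(X^{GV}_{\Q_\infty}(T^*,(T^*)_\wp^+)),
\end{equation*}
\begin{equation*}
Tw_{\kappa}(C_{\Lambda_\OO}(X^{g}_{\Q_\infty}(T',(T')_\wp^+))) = Tw_{\kappa}(C_{\Lambda_\OO}(X^{GV}_{\Q_\infty}(T',(T')_\wp^+)))
\end{equation*}
reduces the claim to the equality of the corresponding three $X^{GV}$-characteristic ideals, which is precisely the content of Theorem \ref{4.7}.

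The only nontrivial point is the verification that the comparison maps between $Sel^{g}_{\Q_\infty}$ and $Sel^{GV}_{\Q_\infty}$ indeed yield a genuine pseudo-isomorphism of $\Lambda_\OO$-modules (and not merely a bound on quotients at each finite layer). This follows because the uniform finiteness of the cokernels in Theorem \ref{2.1} and Corollary \ref{2.3} implies that the direct limit cokernel is a finite abelian group annihilated by a fixed power of $\pi$ and killed by $\Gamma_n$ for some $n$, hence pseudo-null over $\Lambda_\OO$; dually, the kernel of the surjection of $X$'s is finite and pseudo-null. Once this bookkeeping is in place, no further work is required beyond invoking Theorem \ref{4.7}.
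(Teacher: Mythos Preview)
Your argument is correct, but it follows a different path from the one the paper has in mind. The paper's proof (``similar to the proof of Theorem~\ref{4.7}'') is meant to rerun the entire argument of Theorem~\ref{4.7} with the $g$-Selmer groups in place of the $GV$-Selmer groups: one first notes (via the comparison, or directly) that $X^{g}_{\Q_\infty}$ is torsion, chooses $\varphi$ by Lemma~\ref{4.4}, then uses the $g$-control theorem (Theorem~\ref{3.4} and its corollary) to deduce finiteness of $Sel^{g}_{\Q_n}(A,A_\wp^+,\varphi^{-1})$ and $Sel^{g}_{\Q_n}(A^*,(A^*)_\wp^+,\varphi)$, applies the Cassels--Tate pairing (Proposition~\ref{4.1}), and passes to the inverse limit exactly as before. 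In this route the comparison maps of Section~3 are not needed at all, and the chain of maps $\eta_n$ is actually shorter than in Theorem~\ref{4.7} since one stays in the $g$-world throughout; this is in fact the raison d'\^etre of Theorem~\ref{3.4}.

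Your route instead black-boxes Theorem~\ref{4.7} and transfers its conclusion across the pseudo-isomorphisms $X^{GV}_{\Q_\infty}\sim X^{g}_{\Q_\infty}$ furnished by Theorem~\ref{2.1} and Corollary~\ref{2.3} at the $\Q_\infty$-level. This is perfectly valid and arguably more economical as a \emph{deduction}, since it avoids repeating the limit argument and the appeal to Lemmas~\ref{4.2}--\ref{4.4}. One small wording point: for $T'$ you should invoke Theorem~\ref{2.1} applied to the triple $(\sigma,U,U_\wp^+)$ (noting that $\Hom_\FF(U,\FF(\tau))\cong V$, so the hypotheses $H^0(\Delta_\wp,U)=H^0(\Delta_\wp,V)=0$ suffice), rather than Corollary~\ref{2.3}, which is stated for $A^*$.
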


\begin{proof}
The proof is similar to the proof of Theorem \ref{4.7}.

\end{proof}

\section{Examples}
In this section, we provide examples of Artin representations that satisfy the conditions of Theorem B. We took the help of the computer algebra system SAGE \cite{sage}.\\

\begin{exmp}
Let us fix a prime $p=7$ and an embedding $\iota_7$ of $\overline{\Q}$ into $\overline{\Q}_7$. Let $K_1$ be the splitting field of $f(x)= x^3+2x+1$ over $\Q$. The Galois group  $\Gal (K_1/ \Q) \cong S_3$. Let $\rho_1: \Gal (K_1/ \Q)  \longrightarrow \gl_2(\C)$ be the standard representation of $S_3$. Let $K= K_1(\zeta_7)$ and $\rho: \Delta \to \gl_2(\C)$ be the representation $\rho = \rho_1 \circ \pi$, where $\pi: \Delta =  \Gal(K/\Q) \to \Gal(K_1/\Q)$ is the natural map. Using the embedding $\iota_7$,  we can think $\rho$ as an irreducible representation  $\rho: \Delta \to \gl_\FF(V)$ where $\FF = \Q_7(\zeta_3) = \Q_7$ and $V$ is a two dimensional vector space over $\Q_7$. The representation $\sigma$ is defined as $(\sigma,U) = \Hom_{\Q_7}(V,\Q_7(\tau))$, here $\tau$ is the Teichmuller character.\\

For any Archimedian place $v$ of $K$,  $\Delta_v = \Gal(K_{v}/\R) \cong \Z/2\Z$. The complex conjugation, when seen as an element of $S_3$, can be identified with a transposition. Therefore, the multiplicity of the trivial representation in $\rho|_{\Delta_v}$ is $1$, that is, $d^+(\rho)= d^-(\rho)=1$. As a consequence, we have $d^+(\sigma) = d^-(\sigma)=1$. \\
Using \cite{sage}, we get $|\Delta_\p'| = 3$ for any prime $\p$ above $7$, here $\Delta_\p'$ denotes the decomposition group of a prime $\p$ in $K_1$ above $7$. Now $\rho|_{\Delta_\wp} \cong \Q_7(\chi_1) + \Q_7 (\chi_1 ^{-1})$, where $\chi_1 : \Delta_\wp \to \Delta_\wp' \cong \Z/3\Z \longrightarrow \C^{*}$ is the character which sends $a \in (\Z/3\Z)^\times$ to $\zeta_3^a$. Hence $\rho|_{\Delta_\wp}$ does not contain trivial sub-representation. We see that $\sigma|_{\Delta_\wp} = \Q_7 (\tau \chi_1 ^{-1})+ \Q_7(\tau\chi_1)$ and hence $\sigma|_{\Delta_\wp}$ does not contain trivial sub-representation. \\

The triple $(\rho, V, \Q_7(\chi_1))$ and $(\sigma, U , \Q_7(\tau \chi_1^{-1}))$ satisfy  {\bf HYP 1 \& 2}. By  \cite[Proposition 4.7]{gv}, the triples satisfy {\bf HYP Tor{$_\rho$}} and {\bf HYP Tor{$_\sigma$}} as $d^+(\rho) = d^+(\sigma)=1$. Hence all the conditions of {\bf Theorem B} are satisfied for this triple.


\end{exmp}

\begin{exmp}
Let us fix a prime $p=7$ and an embedding $\iota_7$ of $\overline{\Q}$ into $\overline{\Q}_7$. Let $K_1$ be the splitting field of $f(x)= x^4-x-1$ over $\Q$ and $K= K_1(\zeta_7)$. Let $\rho_1: \Gal(K_1/\Q) \cong S_4 \to \gl_2(\C)$ be the $2$-dimensional irreducible representation of $S_4$ and $\rho= \rho_1 \circ \pi : \Gal(K/\Q) \to \Gal(K_1/\Q) \to \gl_2(\C)$ be the lift of $\rho_1$. Via $\iota_7$, we can identify $\rho: \Delta \to \gl_{\FF}(V)$. In this case, we have $d^+(\rho)=d^+(\sigma)=1$. Moreover,  $\rho|_{\Delta_\wp} \cong \FF(\chi_1) + \FF (\chi_1 ^{-1})$ and $\sigma|_{\Delta_\wp} = \FF (\tau \chi_1 ^{-1})+ \FF (\tau\chi_1)$. \\
The triple $(\rho, V, \Q_7(\chi_1))$ and $(\sigma, U , \Q_7(\tau \chi_1^{-1}))$ satisfy  all the conditions of {\bf Theorem B}.
\end{exmp}

\begin{exmp}
Let us fix a prime $p=17$ and an embedding $\iota_{17}$ of $\overline{\Q}$ into $\overline{\Q}_{17}$. Let $K_1$ be the splitting field of $f(x)= x^5-5x^2-3$ over $\Q$ and $K= K_1(\zeta_{17})$. The Galois group $\Delta' =\Gal (K_1/ \Q) \cong D_5 \cong \big < r,s \,\,|\,\,r^5=s^2=1,\,srs=r^{-1}\big >$. Let $\rho_1: \Gal(K_1/\Q) \cong D_5 \to \gl_2(\C)$ be the $2$-dimensional irreducible representation of $D_5$ whose character $\chi$ takes value $\chi(r) = 2 \cos(2 \pi/5)$ and $\rho= \rho_1 \circ \pi : \Gal(K/\Q) \to \Gal(K_1/\Q) \to \gl_2(\C)$ be the lift of $\rho_1$. Via $\iota_{17}$, we can identify $\rho: \Delta \to \gl_{\FF}(V)$. In this case, we can identify the complex conjugation as a product of transposition, and we have $d^+(\rho)=d^+(\sigma)=1$. Moreover,  $\rho|_{\Delta_\wp} \cong \FF(\chi_1) + \FF (\chi_1 ^{-1})$ and $\sigma|_{\Delta_\wp} = \FF (\tau \chi_1 ^{-1})+ \FF(\tau\chi_1)$, here $\chi_1 : \Delta_\wp \to \Delta_\wp' \cong \Z/5\Z \longrightarrow \C^{*}$ is the character which sends $a \in (\Z/5\Z)^\times$ to $\zeta_5^a$. \\
The triple $(\rho, V, \FF(\chi_1))$ and $(\sigma, U , \FF(\tau \chi_1^{-1}))$ satisfy  all the conditions of {\bf Theorem B}.

\end{exmp}

\begin{exmp}
Let us fix a prime $p=11$ and an embedding $\iota_{11}$ of $\overline{\Q}$ into $\overline{\Q}_{11}$. Let $K_1$ be the splitting field of $f(x)= x^5-2$ over $\Q$ and $K= K_1(\zeta_{11})$. The Galois group $\Delta' =\Gal (K_1/ \Q) \cong F_{20} \cong \big < a,b\,|\, a^5=1, b^4=1,bab^{-1}=a^{3}\big >$. Let $\rho_1: \Gal(K_1/\Q) \to \gl_4(\C)$ be the $4$-dimensional irreducible representation of $F_{20}$ and $\rho= \rho_1 \circ \pi : \Gal(K/\Q) \to \Gal(K_1/\Q) \to \gl_4(\C)$ be the lift of $\rho_1$. Via $\iota_{11}$, we can identify $\rho: \Delta \to \gl_{\FF}(V)$. In this case, we can identify the complex conjugation as a product of transposition, and we have $d^+(\rho)=d^+(\sigma)=2$. Moreover,  $\rho|_{\Delta_\wp} \cong \FF(\chi_1) + \FF (\chi_1 ^2) + \FF (\chi_1 ^3) + \FF (\chi_1 ^4)$ and $\sigma|_{\Delta_\wp} = \FF (\tau \chi_1 ^4)+ \FF(\tau\chi_1^3) +  \FF(\tau\chi_1^2) +  \FF(\tau\chi_1)$, here $\chi_1 : \Delta_\wp \to \Delta_\wp' \cong \Z/5\Z \longrightarrow \C^{*}$ is the character which sends $a \in (\Z/5\Z)^\times$ to $\zeta_5^a$. \\
The triple $(\rho, V, \FF(\chi_1) +  \FF(\chi_1^2))$ and $(\sigma, U , \FF(\tau \chi_1^4)+  \FF(\tau\chi_1^3))$ satisfy  all the conditions of {\bf Theorem B} if we assume that the dual Selmer groups are torsion.

\end{exmp}

\begin{exmp}
Let us fix a prime $p=11$ and an embedding $\iota_{11}$ of $\overline{\Q}$ into $\overline{\Q}_{11}$. Let $K_1$ be the splitting field of $f(x)= x^5-x^2-2x-3$ over $\Q$ and $K= K_1(\zeta_{11})$. Let $\rho_1: \Gal(K_1/\Q) \cong A_5 \to \gl_4(\C)$ be the $4$-dimensional irreducible representation of $A_{5}$ and $\rho= \rho_1 \circ \pi : \Gal(K/\Q) \to \Gal(K_1/\Q) \to \gl_4(\C)$ be the lift of $\rho_1$. Via $\iota_{11}$, we can identify $\rho: \Delta \to \gl_{\FF}(V)$. In this case, we can identify the complex conjugation as a product of transposition, and we have $d^+(\rho)=d^+(\sigma)=2$. Moreover,  $\rho|_{\Delta_\wp} \cong \FF(\chi_1) + \FF (\chi_1 ^2) + \FF (\chi_1 ^3) + \FF (\chi_1 ^4)$ and $\sigma|_{\Delta_\wp} = \FF (\tau \chi_1 ^4)+ \FF(\tau\chi_1^3) +  \FF(\tau\chi_1^2) +  \FF(\tau\chi_1)$, here $\chi_1 : \Delta_\wp \to \Delta_\wp' \cong \Z/5\Z \longrightarrow \C^{*}$ is the character which sends $a \in (\Z/5\Z)^\times$ to $\zeta_5^a$. \\
The triple $(\rho, V, \FF(\chi_1) +  \FF(\chi_1^2))$ and $(\sigma, U , \FF(\tau \chi_1^4)+  \FF(\tau\chi_1^3))$ satisfy  all the conditions of {\bf Theorem B} if we assume that the dual Selmer groups are torsion.

\end{exmp}

\begin{exmp}
Let us fix a prime $p=29$ and an embedding $\iota_{29}$ of $\overline{\Q}$ into $\overline{\Q}_{29}$. Let $K_1$ be the splitting field of $f(x)= x^5-x^2-2x-3$ over $\Q$ and $K= K_1(\zeta_{29})$. Let $\rho_1: \Gal(K_1/\Q) \cong A_5 \to \gl_3(\C)$ be a $3$-dimensional irreducible representation of $A_{5}$ and $\rho= \rho_1 \circ \pi : \Gal(K/\Q) \to \Gal(K_1/\Q) \to \gl_3(\C)$ be the lift of $\rho_1$. Via $\iota_{29}$, we can identify $\rho: \Delta \to \gl_{\FF}(V)$. In this case, we can identify the complex conjugation as a product of transposition, and we have $d^+(\rho)=d^-(\sigma)=1$, $d^-(\rho)=d^+(\sigma)=2$. Moreover,  $\rho|_{\Delta_\wp} \cong \FF(\chi) + \delta $ and $\sigma|_{\Delta_\wp} = \FF (\tau \chi)+ \delta\tau$, here $\chi : \Delta_\wp \to \Delta_\wp' \cong S_3 \longrightarrow \C^{*}$ is the sign character and  $\delta : \Delta_\wp \to \Delta_\wp' \cong S_3 \longrightarrow \gl_2(\C)$ is the standard representation. \\
The triple $(\rho, V, \FF(\chi))$ and $(\sigma, U , \delta\tau ) $ satisfy  all the conditions of {\bf Theorem B}.

\end{exmp}

\vskip 5mm

\noindent {\bf Dipramit Majumdar}, Department of Mathematics, Indian
Institute of Technology Madras, IIT P.O.
Chennai 600036, India (dipramit@gmail.com) \\

\noindent {\bf Subhasis Panda}, Department of Mathematics, Indian
Institute of Technology Madras, IIT P.O.
Chennai 600036, India (subhasispanda559@gmail.com ) \\

\end{document}